\numberwithin{equation}{section}
\theoremstyle{plain}
\newtheorem{theorem}{Theorem}[section]
\newtheorem{lemma}[theorem]{Lemma}
\newtheorem{proposition}[theorem]{Proposition}
\newtheorem{corollary}[theorem]{Corollary}
\newcommand{\argmin}{\mathop{\rm argmin~}}
\newcommand{\qhat} {\widehat{q}}
\newcommand{\qtil} {\widetilde{q}}
\newcommand \bbP{\mathbb{P}}
\newcommand \bbE{\mathbb{E }}
\def\T{{ \mathrm{\scriptscriptstyle T} }}
\def\VB{{ \mathrm{\scriptscriptstyle VB} }}
\def\m{\mathcal}
\def\mb{\mathbb}
\begin{document}
\bibliographystyle{plain}

\title{$\alpha$-Variational Inference with Statistical Guarantees}
%\thankstext{T1}{Footnote to the title with the `thankstext' command.}

%\author[1]{Anirban Bhattacharya\thanks{anirbanb@stat.tamu.edu}}
%\author[2]{Debdeep Pati\thanks{debdeep@stat.fsu.edu}}
%\author[2]{Yun Yang \thanks{yyang@stat.fsu.edu (Note: Authors are arranged in alphabetical order)}}
%\affil[1]{Department of Statistics, Texas A\&M University}
%\affil[2]{Department of Statistics, Florida State University}

\author[1]{Yun Yang \thanks{yyang@stat.fsu.edu}}
\author[2]{Debdeep Pati\thanks{debdeep@stat.tamu.edu}}
\author[2]{Anirban Bhattacharya\thanks{anirbanb@stat.tamu.edu}}
\affil[1]{Department of Statistics, Florida State University}
\affil[2]{Department of Statistics, Texas A\&M University}
\date{\vspace{-2em}}
\maketitle 
%\address{Address of the Third author\\
%usually few lines long\\
%usually few lines long\\
%\printead{e3}\\
%\printead{u1}}

\begin{abstract}
We propose a family of variational approximations to Bayesian posterior distributions, called $\alpha$-VB, with provable statistical guarantees. The standard variational approximation is a special case of $\alpha$-VB with $\alpha=1$. When $\alpha \in(0,1]$, a novel class of variational inequalities are developed for linking the Bayes risk under the variational approximation to the objective function in the variational optimization problem, implying that maximizing the evidence lower bound in variational inference has the effect of minimizing the Bayes risk within the variational density family. Operating in a frequentist setup, the variational inequalities imply that point estimates constructed from the $\alpha$-VB procedure converge at an optimal rate to the true parameter in a wide range of problems. 
We illustrate our general theory with a number of examples, including the mean-field variational approximation to (low)-high-dimensional Bayesian linear regression with spike and slab priors, mixture of Gaussian models, latent Dirichlet allocation, and (mixture of) Gaussian variational approximation in regular parametric models.
\end{abstract}

{\small \textsc{Keywords:} {\em Bayes risk; Evidence lower bound; Latent variable models; R{\'e}nyi divergence; Variational inference.}}

\section{Introduction and preliminaries}

Variational inference \cite{jordan1999introduction,wainwright2008graphical} is a widely-used 
tool for approximating complicated probability densities, especially those arising as posterior distributions from complex hierarchical Bayesian models. It provides an alternative strategy to Markov chain Monte Carlo (MCMC, \cite{hastings1970monte,gelfand1990sampling}) sampling by turning the sampling/inference problem into an optimization problem, where a closest member, relative to the Kullback--Leibler (KL) divergence, in a family of approximate densities is picked out as a proxy to the target density. Variational inference has found its success in a variety of contexts, especially in models involving latent variables, such as Hidden Markov models \cite{mackay1997ensemble}, graphical models \cite{attias2000variational,wainwright2008graphical}, mixture models \cite{humphreys2000approximate,corduneanu2001variational,ueda2002bayesian}, and topic models \cite{blei2012probabilistic,blei2003latent} among others. See the recent review paper \cite{blei2017variational} by Blei et al.~for a comprehensive introduction to variational inference.

The popularity of variational methods can be largely attributed to their computational advantages over MCMC. It has been empirically observed in many applications that variational inference operates orders of magnitude faster than MCMC for achieving the same approximation accuracy. Moreover, compared to MCMC, variational inference tends to be easier to scale to big data due to its inherent optimization nature, and can take advantage of modern optimization techniques such as stochastic optimization \cite{kushner1997stochastic,kingma2014adam} and distributed optimization \cite{ahmed2012scalable}.  However, unlike MCMC that is guaranteed to produce (almost) exact samples from the target density for ergodic chains \cite{robert2004monte}, variational inference does not enjoy such general theoretical guarantee. 

Several threads of research have been devoted to characterize statistical properties of the variational proxy to the true posterior distribution; refer to Section 5.2 of \cite{blei2017variational} for a relatively comprehensive survey of the theoretical literature on variational inference. However, almost all these studies are conducted in a case-by-case manner, by either explicitly analyzing the fixed point equation of the variational optimization problem, or directly analyzing the iterative algorithm for solving the optimization problem. In addition, these analyses require certain structural assumptions on the priors such as conjugacy, and is not applicable to broader classes of priors. 

This article introduces a novel class of variational approximations and studies their large sample convergence properties in a unified framework. The new variational approximation, termed $\alpha$-VB, introduces a fixed temperature parameter $\alpha$ inside the usual VB objective function which controls the relative trade-off between model-fit and prior regularization. The usual VB approximation is retained as a special case corresponding to $\alpha = 1$. The $\alpha$-VB objective function is partly motivated by {\em fractional posteriors} \cite{walker2001bayesian,bhattacharya2016bayesian}; specific connections are drawn in \S2.1. The general $\alpha$-VB procedure also inherits all the computational tractability and scalability from the $\alpha = 1$ case, and implementation-wise only requires simple modifications to existing variational algorithms. 

For $\alpha \in (0, 1]$, we develop novel variational inequalities for the Bayes risk under the variational solution. These variational inequalities link the Bayes risk with the $\alpha$-VB objective function, implying that maximizing the evidence lower bound has the effect of minimizing the Bayes risk within the variational density family. A crucial upshot of this analysis is that point estimates constructed from the variational posterior concentrate at the true parameter at the same rate as those constructed from the actual posterior for a variety of problems. There is now a well-developed literature on the frequentist concentration properties of posterior distributions in nonparametric problems; refer to \cite{rousseau2016frequentist} for a detailed review, and the present paper takes a step towards developing similar general-purpose theoretical guarantees for variational solutions. We applied our theory to a number of examples where VB is commonly used, including mean-field variational approximation to high-dimensional Bayesian linear regression with spike and slab priors, mixtures of Gaussian models, latent Dirichlet allocation, and Gaussian-mixture variational approximation to regular parametric models. 

The $\alpha < 1$ case is of particular interest as the major ingredient of the variational inequality involves the prior mass assigned to appropriate Kullback--Leibler neighborhoods of the truth which can be bounded in a straightforward fashion in the aforesaid models and beyond. We mention here a recent preprint by Alquier and Ridgeway \cite{alquier2017concentration} where variational approximations to tempered posteriors (without latent variables) are conducted. The $\alpha$-VB objective function considered here incorporates a much broader class of models involving latent variables, and the corresponding variational inequality recovers the risk bound of \cite{alquier2017concentration} when no latent variables are present. The variational inequalities for the $\alpha < 1$ case do not immediately extend to the $\alpha = 1$ case under a simple limiting operation, and require a separate treatment under stronger assumptions. In particular, we make use of additional testability assumptions on the likelihood function detailed in \S 3.2. Similar assumptions have been used to study concentration of the usual posterior \cite{ghosal2007convergence}.

It is a well-known fact \cite{wang2005inadequacy,westling2015establishing} that the covariance matrices from the variational approximations are typically ``too small" compared with those for the sampling distribution of the maximum likelihood estimator, which combined with the Bernstein von-Mises theorem \cite{van2000asymptotic} implies that the variational approximation may not converge to the true posterior distribution.
This fact combined with our result illustrate the landscape of variational approximation---minimizing the KL divergence over the variational family forces the variational distribution to concentrate around the truth at the optimal rate (due to the heavy penalty on the tails in the KL divergence); however, the local shape of the obtained density function around the truth can be far away from that of the true posterior due to mis-match between the distributions in the variational family and the true posterior. Overall, our results reveal that concentration of the posterior measure is not only useful in guaranteeing desirable statistical properties, but also has computational benefits in certifying consistency and concentration of variational approximations.

In the remainder of this section, we introduce key notation used in the paper and provide necessary background on variational inference. 
\subsection{Notation}
We briefly introduce notation that will be used throughout the paper.
Let $h(p\,||\,q) =(\int(p^{1/2}- q^{1/2})^2d\mu)^{1/2}$ and $D(p\,||\,q) = \int p\log$ $(p/q)d\mu$ denote the Hellinger distance and Kullback--Leibler divergence, respectively, between two probability density functions $p$ and $q$ relative to 
a common dominating measure $\mu$. We define an additional discrepancy measure $V(p\,||\,q) = \int p\,\log^2(p/q)\,d\mu$, which will be referred to as the $V$-divergence. For a set $A$, we use the notation $I_A$ to denote its indicator function. For any vector $\mu$ and positive semidefinite matrix $\Sigma$, we use $\m N(\mu, \Sigma)$ to denote the normal distribution with mean $\mu$ and covariance matrix $\Sigma$, and use $\m N(\theta;\, \mu, \Sigma)$ to denote its pdf at $\theta$.

For any $\alpha \in (0, 1)$, let
\begin{align}\label{eq:renyi_def}
D_{\alpha}(p \,||\, q) = \frac{1}{\alpha-1} \log \int p^{\alpha} q^{1 - \alpha} d\mu 
\end{align}
denote the R{\'e}nyi divergence of order $\alpha$. Jensen's inequality implies that $D_{\alpha}(p\,||\, q) \ge 0$ for any $\alpha \in (0, 1)$, and the equality holds if and only if $p=q$. The Hellinger distance can be related with the $\alpha$-divergence with $\alpha=1/2$ by $D_{1/2}(p\,||\,  q) = -2 \log \{ 1 - (1/2)h^2(p\,||\,  q) \} \ge h^2(p\,||\,  q)$ using the inequality $\log(1 + t) < t$ for $t > -1$. More details and properties of the $\alpha$-divergence can be found in \cite{van2014renyi}. 

\subsection{Review of variational inference}\label{Sec:VI}
Suppose we have observations $Y^n=(Y_1,\ldots,Y_n)\in \mathscr{Y}^n$ with $n$ denoting the sample size. Let $\mb P_{\theta}^{(n)}$ be the distribution of $Y^n$ given parameter $\theta \in \Theta$ that admits a density $p^{(n)}_\theta$ relative to the Lebesgue measure. We will also interchangeably use $P(Y^n\,|\,\theta)$ and $p(Y^n\,|\,\theta)$ to denote $\mb P_{\theta}^{(n)}$ and its density function (likelihood function) $p_\theta^{(n)}$. Assume additionally that the likelihood $p(Y^n\,|\,\theta)$ can be represented as 
\begin{align*}
p(Y^n \mid \theta) = \sum_{s^n} p(Y^n \mid S^n = s^n, \theta) \, p(S^n = s^n \mid \theta),
\end{align*}
where $S^n$ denotes a collection of latent or unobserved variables; the superscript $n$ signifies the possible dependence of the number of latent variables on $n$; for example, when there are observation specific latent variables. In certain situations, the latent variables may be introduced for purely computational reasons to simplify an otherwise intractable likelihood, such as the latent cluster indicators in a mixture model. Alternatively, a complex probabilistic model $p(Y^n\,|\,\theta)$ may itself be defined in a hierarchical fashion by first specifying the distribution of the data given latent variables and parameters, and then specifying the latent variable distribution given parameters; examples include the latent Dirichlet allocation and many other prominent Bayesian hierarchical models.  For ease of presentation, we have assumed discrete latent variables in the above display and continue to do so subsequently, although our development seamlessly extends to continuous latent variables by replacing sums with integrals; further details are provided in a supplemental document. 

Let $P_{\theta}$ denote a prior distribution on $\theta$ with density function $p_\theta$, and denote $W^n = (\theta, \, S^n) \in \mathscr{W}^n$. In a Bayesian framework, all inference is based on the augmented posterior density $p(W^n\,|\,Y^n)$ given by 
\begin{align}\label{eq:aug_post}
p(W^n\,|\,Y^n) = p(\theta, S^n \,|\, Y^n) \propto p(Y^n \,|\, \theta, S^n) \, p(S^n\,|\,\theta) \, p_{\theta}(\theta). 
\end{align}
In many cases, $p(W^n\,|\, Y^n)$ can be inconvenient for conducting direct analysis due to its intractable normalizing constant and expensive to sample from due to the slow mixing of standard MCMC algorithms. Variational inference aims to bypass these difficulties by turning the inference problem into an optimization problem, which can be solved by using iterative algorithms such as coordinate descent \cite{bishop2006pattern} and alternating minimization.

Let $\Gamma$ denote a pre-specified family of density functions over $\mathscr{W}^n$ that can be either parameterized by some ``variational parameters", or required to satisfy some structural constraints (see below for examples of $\Gamma$).
The goal of variational inference is to approximate this conditional density $p(W^n\,|\, Y^n)$ by finding the closest member of this family in KL divergence to the conditional density $p(W^n\,|\, Y^n)$ of interest, that is, computing the minimizer 
\begin{align}
\qhat_{W^n}:&\, = \argmin_{q_{W^n} \in \Gamma} D\big[\,q_{W^n}(\cdot)\,\big|\big|\, p(\cdot\,|\, Y^n)\,\big] \notag\\
&=\argmin_{q_{W^n} \in \Gamma} \bigg\{-\int_{\mathscr{W}^n} q_{W^n}(w^n)\, \log \frac{p(w^n\,|\, Y^n)}{q_{W^n}(w^n)}\, dw^n \bigg\}\notag \\
&=\argmin_{q_{W^n} \in \Gamma} \bigg\{- \, \underbrace{\int_{\mathscr{W}^n} q_{W^n}(w^n)\, \log \frac{p(Y^n\,|\,w^n)\, p_{W^n}(w^n)}{q_{W^n}(w^n)}\, dw^n}_{L(q_{W^n})} \bigg\} \label{eq:VB_obj_original}
\end{align}
where the last step follows by using Bayes' rule and the fact that the marginal density $p(Y^n)$ does not depend on $W^n$ and $q_{W^n}$. The function $L(q_{W^n})$ inside the argmin-operator above (without the negative sign) is called the evidence lower bound (ELBO, \cite{blei2017variational}) since it provides a lower bound to the log evidence $\log p(Y^n)$,
\begin{align}\label{eq:elbo_dec}
\log p(Y^n) = L(q_{W^n}) + D\big[\,q_{W^n}(\cdot)\,\big|\big|\, p(\cdot\,|\, Y^n)\,\big] \geq  L(q_{W^n}),
\end{align}
where the equality holds if and only if $q_{W^n} = p(\cdot\,|\, Y^n)$. The decomposition \eqref{eq:elbo_dec} provides an alternative interpretation of variational inference to the original derivation from Jensen's inequality\cite{jordan1999introduction}---minimizing
the KL divergence over the variational family $\Gamma$ is equivalent to maximizing the ELBO over $\Gamma$.
When $\Gamma$ is composed of all densities over $\mathscr{W}^n$, this variational approximation $\qhat_{W^n}$ exactly recovers $p(W^n\,|\, Y^n)$. In general, the variational family $\Gamma$ is chosen to balance between the computational tractability and the approximation accuracy. Some common examples of $\Gamma$ are provided below.

\vspace{1em}
\paragraph{{\bf Example:} (Exponential variational family)} 
When there is no latent variable and $W^n=\theta\in\Theta$ corresponds to the parameter in the model, a popular choice of the variational family is an exponential family of distributions. 
%exponential variational approximation can be used where the variational family is composed of densities in the exponential family
%\begin{align*}
%q_\theta(\theta;\, \kappa) = h(\theta)\,\exp\big\{\langle \eta(\kappa),\, T(\theta) \rangle  - A(\kappa)\big\},\quad \forall\theta\in \Theta,
%\end{align*} 
%where $\kappa\in \mathscr{K}$ is the variational parameter that parameterizes the exponential family, $\eta(\kappa)$ is the canonical parameter, $T(\theta)$ the sufficient statistics for the canonical parameter, and $A(\kappa)$ the log partition function. 
Among the exponential variational families, the Gaussian variational family, $
q_\theta(\theta;\, \mu, \,\Sigma) \equiv \m N(\theta;\,\mu,\,\Sigma)$ for $\theta\in \mb R^d$,
is the most widely-used owing to the Bernstein von-Mises theorem (Section 10.2 of \cite{van2000asymptotic}), 
stating that for regular parametric models, the posterior distribution converges to a Gaussian limit relative to the total variation metric as the sample size tends to infinity. 
There are also some recent developments by replacing the single Gaussian with a Gaussian-mixture as the variational family to improve finite-sample approximation \cite{zobay2014variational}, which is useful when the posterior distribution is skewed or far away from Gaussian for the given sample size.

%\vspace{1em}
\paragraph{{\bf Example:} (Mean-field variational family)} 
Suppose that $W^n$ can be decomposed into $m$ components (or blocks) as $W^n=(W_1,\,W_2,\ldots,\,W_m)$ for some $m>1$, where each component $W_j \in \mathscr{W}_j$ can be multidimensional. 
%For example, when $W^n$ contains observation specific latent variables $S^n=(S_1,\ldots,S_n)$ with $S_i$ being associated with $Y_i$ for $i=1,\ldots,n$, then it is convenient to let $(W_1,\ldots,W_n)$ correspond to the $n$ latent variables, and let the remaining components of $W^n$ to correspond to the model parameters. 
The mean-field variational family $\Gamma_{MF}$ is composed of all density functions over $\mathscr{W}^n=\prod_{j=1}^m \mathscr{W}_j$ that factorizes as
\begin{align*}
q_{W^n} (w^n) = \prod_{j=1}^m q_{W_j}(w_j), \quad w^n=(w_1,\ldots,w_m)\in \mathscr{W}^n,
\end{align*}
where each variational factor $q_{W_j}$ is a density function over $\mathscr{W}_j$ for $j=1,\ldots,m$. A natural mean-field decomposition is to let $q_{W^n}(w^n) = q_{\theta}(\theta) \, q_{S^n}(s^n)$, with $q_{S^n}$ often further decomposed as $q_{S^n}(s^n) = \prod_{i=1}^n q_{S_i}(s_i)$. 

Note that we have not specified the parametric form of the individual variational factors, which are determined by properties of the model---
in some cases, the optimal $q_{W_j}$ is in the same parametric family as the conditional distribution of $W_j$ given the parameter.
The corresponding mean-field variational approximation $\qhat_{W^n}$, which is necessarily of the form $\prod_{j=1}^m \qhat_{W_j}(w_j)$, can be computed via the coordinate ascent variational inference (CAVI) algorithm \cite{bishop2006pattern,blei2017variational} which iteratively optimizes each variational factor keeping others fixed at their present value and resembles the EM algorithm in the presence of latent variables.

The mean-field variational family can be further constrained by restricting each factor $q_{W_j}$ to belong to a parametric family, such as the exponential family in the previous example. In particular, it is a common practice to restrict the variational density $q_\theta$ of the parameter into a structured family (for example, the mean-field family if $\theta$ is multi-dimensional), which will be denoted by $\Gamma_\theta$ in the sequel.  \\[2ex]
The rest of the paper is organized as follows. In \S2, we introduce the $\alpha$-VB objective function and relate it to usual VB. \S3 presents our general theoretical results concerning finite sample risk bounds for the $\alpha$-VB solution. In \S4, we apply the theory to concrete examples. We conclude with a discussion in \S5. All proofs and some additional discussions are provided in a separate supplemental document. The supplemental document also contains a detailed simulation study.

%In Section 2, we provide the necessary background, and introduce our variational inference framework, along with some concrete examples of variational families focused in the paper. In Section 3, we propose a new variational approach, called $\alpha$-VB, under two different statistical frameworks: models without latent variable and models with latent variables.
%In Section 4, we present our theoretical results concerning large sample risk estimates. In Section 5, we apply the theory to concrete examples. In Section 6, we conduct some numerical studies on the $\alpha$-VB.
%All proofs are provided in Section 7. A discussion is provided in Section 8. 

\section{The $\alpha$-VB procedure}
Before introducing the proposed family of objective functions, we first represent the KL term $D\big[\,q_{W^n}(\cdot)\,\big|\big|\, p(\cdot\,|\, Y^n)\,\big]$ in a more convenient form which provides intuition into how VB works in the presence of latent variables and aids our subsequent theoretical development. 
\subsection{A further decomposition of the ELBO}

To aid our subsequent development, we introduce some additional notation and make some simplifying assumptions. First, we decompose $\theta=(\mu,\,\pi)$, with $p(Y^n \,|\, S^n = s^n, \theta) = p(Y^n\,|\,S^n = s^n, \mu)$ and 
and $\pi_{s^n} :\,= p(S^n = s^n \,|\, \theta)$. In other words, $\mu$ is the parameter characterizing the conditional distribution $P(Y^n\,|\,S^n,\,\mu)$ of the observation $Y^n$ given latent variable $S^n$, and $\pi = (\pi_{s^n})$ characterizes the distribution $P(S^n\,|\,\pi)$ of the latent variables. We shall also assume the mean-field decomposition 
\begin{align}\label{eq:MFa}
q_{W^n}(w^n) = q_{\theta}(\theta) \, q_{S^n}(s^n)
\end{align} throughout, and let $\Gamma = \Gamma_{\theta} \times \Gamma_{S^n}$ denote the class of such product variational distributions. When necessary subsequently, we shall further assume $q_{S^n}(s^n) = \prod_{i=1}^n q_{S_i}(s_i)$ and $q_{\theta}(\theta) = q_{\mu}(\mu) \, q_{\pi}(\pi)$, which however is not immediately necessary for this subsection. 

The KL divergence $D\big[\,q_{W^n}(\cdot)\,\big|\big|\, p(\cdot\,|\, Y^n)\,\big]$ in \eqref{eq:VB_obj_original} involves both parameters and latent variables. Separating out the KL divergence for the parameter part leads to the equivalent representation 
\begin{align}\label{eq:KL_decomp}
D\big[\,q_{W^n}(\cdot)\,\big|\big|\, p(\cdot\,|\, Y^n)\,\big] = D(q_{\theta}\,\big|\big|\,p_{\theta}) - \int_{\Theta} \underbrace{ \bigg[ \sum_{s^n} q_{S^n}(s^n) \, \log \frac{p(Y^n\,|\,\mu, s^n) \, \pi_{s^n}}{q_{S^n}(s^n)} \bigg] }_{\widehat{\ell}_n(\theta)} \, q_{\theta}(d\theta).
\end{align}
Observe that, using concavity of $x \mapsto \log x$ and Jensen's inequality, 
\begin{align*}
\log p(Y^n\,|\,\theta) = \log \bigg[\sum_{s^n} q_{S^n}(s^n) \, \frac{p(Y^n\,|\,\mu, s^n) \, \pi_{s^n}}{q_{S^n}(s^n)} \bigg] \ge \sum_{s^n} q_{S^n}(s^n) \, \log \frac{p(Y^n\,|\,\mu, s^n) \, \pi_{s^n}}{q_{S^n}(s^n)}. 
\end{align*}
The quantity $\widehat{\ell}_n(\theta)$ in \eqref{eq:KL_decomp} can therefore be recognized as an approximation (from below) to the log likelihood $\ell_n(\theta) :\,= \log p(Y^n \mid \theta)$ in terms of the latent variables. Define an average Jensen gap $\Delta_J$ due to the variational approximation to the log-likelihood, 
\begin{align*}
\Delta_J(q_{\theta}, q_{S^n}) =  \int_{\Theta}  \big[ \ell_n(\theta) - \widehat{\ell}_n(\theta) \big]  \, q_{\theta}(d\theta) \ge 0.
\end{align*}
With this, write the KL divergence $D\big[\,q_{W^n}(\cdot)\,\big|\big|\, p(\cdot\,|\, Y^n)\,\big]$ as 
\begin{align}\label{eq:KL_decomp1}
& D\big[\,q_{W^n}(\cdot)\,\big|\big|\, p(\cdot\,|\, Y^n)\,\big] = - \int_{\Theta} \ell_n(\theta) q_{\theta}(d\theta)  + \Delta_J(q_{\theta}, q_{S^n}) + D(q_{\theta}\,\big|\big|\,p_{\theta}), 
\end{align}
which splits as a sum of three terms: an integrated (w.r.t.~the variational distribution) negative log-likelihood, the KL divergence between the variational distribution $q_{\theta}$ and the prior $p_{\theta}$ for $\theta$, and the Jensen gap $\Delta_J$ due to the latent variables. In particular, the role of the latent variable variational distribution $q_{S^n}$ is conveniently confined to $\Delta_J$. %which is harnessed in our subsequent analysis.  

Another view of the above is an equivalent formulation of the ELBO decomposition \eqref{eq:elbo_dec}, 
\begin{align}\label{eq:ELBO_mod}
\log p(Y^n) = L(q_{W^n}) + \Delta_J(q_{\theta}, q_{S^n}) + D\big[q_{\theta}(\theta)\,\big|\big|\, p(\theta\,|\,Y^n)\big]. 
\end{align}
which readily follows since 
\begin{align*}
D\big[q_{\theta}(\theta)\,\big|\big|\, p(\theta\,|\,Y^n)\big] = - \int_{\Theta} \ell_n(\theta) q_{\theta}(d\theta)  + D(q_{\theta}\,\big|\big|\,p_{\theta}). 
\end{align*}
Thus, in latent variable models, maximizing the ELBO $L(q_{W^n})$ is equivalent to minimizing a sum of the Jensen gap $\Delta_J$ and the KL divergence between the variational density and the posterior density of the parameters. When there is no likelihood approximation with latent variables, $\Delta_J = 0$. 
%Thus, maximizing the ELBO is equivalent to minimizing the 

\subsection{The $\alpha$-VB objective function}

Here and in the rest of the paper, we adopt the frequentist perspective by assuming that there is a true data generating model $\mb P_{\theta^\ast}^{(n)}$ that generates the data $Y^n$, and $\theta^\ast$ will be referred to as the true parameter, or simply truth. Let $\ell_n(\theta, \theta^\ast) = \ell_n(\theta) - \ell_n(\theta^\ast)$ be the log-likelihood ratio. Define 
\begin{align}\label{eq:VBapp}
\Psi_n(q_{\theta}, \,q_{S^n}) = - \int_{\Theta} \ell_n(\theta, \theta^\ast) q_{\theta}(d\theta)  + \Delta_J(q_{\theta}, q_{S^n}) + D(q_{\theta}\,\big|\big|\,p_{\theta}),
\end{align}
and observe that $\Psi_n$ differs from the KL divergence $D\big[\,q_{W^n}(\cdot)\,\big|\big|\, p(\cdot\,|\, Y^n)\,\big]$ in \eqref{eq:KL_decomp1} only by $\ell_n(\theta^\ast)$ which does not involve the variational densities. Hence, minimizing $D\big[\,q_{W^n}(\cdot)\,\big|\big|\, p(\cdot\,|\, Y^n)\,\big]$ is equivalent to minimizing $\Psi_n(q_{\theta}, \,q_{S^n})$. We note here that the introduction of the $\ell_n(\theta^\ast)$ term is to develop theoretical intuition and the actual minimization does not require the knowledge of $\theta^\ast$.

The objective function $\Psi_n$ in \eqref{eq:VBapp} elucidates the trade-off between model-fit and fidelity to the prior underlying a variational approximation, which is akin to the classical bias-variance trade-off for shrinkage or penalized estimators. The model-fit term consists of two constituents: the first term is an averaged (with respect to the variational distribution) log-likelihood ratio which tends to get small as the variational distribution $q_{\theta}$ places more mass near the true parameter $\theta^\ast$, while the second term is the Jensen gap $\Delta_J$ due to the variational approximation with the latent variables. On the other hand, the regularization or penalty term $D(q_\theta\,| |\, p_\theta)$ prevents over-fitting to the data by constricting the KL divergence between the variational solution and the prior.

In this article, we study a wider class of variational objective functions $\Psi_{n,\alpha}$ indexed by a scalar parameter $\alpha \in (0, 1]$ which encompass the usual VB, 
\begin{align}\label{eq:VBapp1}
\Psi_{n,\alpha}(q_{\theta}, \, q_{S^n}) = \underbrace{- \int_{\Theta} \ell_n(\theta, \theta^\ast) q_{\theta}(d\theta)  + \Delta_J(q_{\theta}, q_{S^n})}_{\text{model fit}} + \underbrace{\alpha^{-1} D(q_{\theta}\,\big|\big|\,p_{\theta})}_{\text{regularization}},
\end{align}
and define the $\alpha$-VB solution as 
\begin{align}\label{eq:alpha_VB_sol}
(\qhat_{\theta,\alpha}, \qhat_{S^n, \alpha}) = \argmin_{(q_{\theta}, q_{S^n}) \in \Gamma} \Psi_{n,\alpha}(q_{\theta},\, q_{S^n}). 
\end{align}
Observe that the $\alpha$-VB criterion $\Psi_{n,\alpha}$ differs from $\Psi_n$ only in the regularization term, where the inverse temperature parameter $\alpha$ controls the amount of regularization, with smaller $\alpha$ implying a stronger penalty. When $\alpha = 1$, $\Psi_{n,\alpha}$ reduces to the usual variational objective function $\Psi_n$ in \eqref{eq:VBapp}, and we shall denote the solution of \eqref{eq:alpha_VB_sol} by $\qhat_{\theta}$ and $\qhat_{S^n}$ as before. As we shall see in the sequel, the introduction of the temperature parameter $\alpha$ substantially simplifies the theoretical analysis and allows one to certify (near-)minimax optimality of the $\alpha$-VB solution for $\alpha < 1$ under only a prior mass condition, whereas analysis of the the usual VB solution ($\alpha = 1$) requires more intricate testing arguments. 

The $\alpha$-VB solution can also be interpreted as the minimizer of a certain divergence function between the product variational distribution $q_{\theta}(\theta) \times q_{S^n}(s^n)$ and the joint $\alpha$-{\em fractional posterior} distribution \cite{bhattacharya2016bayesian} of $(\theta, S^n)$, 
\begin{align}\label{eq:jfp}
P_\alpha(\theta\in B, s^n\,|\,Y^n) = \frac{\int_{B} \big[p(Y^n\,|\,\mu,\,s^n)\,\pi_{s^n}\big]^\alpha\, p_\theta(\theta)\, d\theta}{\int_{\Theta} \sum_{s^n} \big[p(Y^n\,|\,\mu,\,s^n)\,\pi_{s^n}\big]^\alpha\, p_\theta(\theta)\, d\theta},
\end{align}
which is obtained by raising the joint likelihood of $(\theta, s^n)$ to the fractional power $\alpha$, and combining with the prior $p_\theta$ using Bayes' rule. We shall use $p_{\alpha}(\cdot \,|\, Y^n)$ to denote the fractional posterior density. 
The fractional posterior is a specific example of a Gibbs posterior \cite{jiang2008gibbs} and shares a nice coherence property with the usual posterior when viewed as a mechanism for updating beliefs \cite{bissiri2016general}.

\begin{proposition}[Connection with fractional posteriors]
The $\alpha$-VB solution $(\qhat_{\theta,\alpha}, \qhat_{S^n, \alpha})$ satisfy, 
\begin{align*}
(\qhat_{\theta,\alpha}, \qhat_{S^n, \alpha}) = \argmin_{(q_{\theta}, q_{S^n}) \in \Gamma} \bigg[ D\big[\,q_{W^n}(\cdot)\,\big|\big|\, p_{\alpha}(\cdot\,|\, Y^n)\,\big] + (1 - \alpha) \m H(q_{S^n}) \bigg],
\end{align*}
where $\m H(q_{S^n}) = - \sum_{s^n} q_{S^n}(s^n) \log q_{S^n}(s^n)$ is the {\em entropy} of $q_{S^n}$, and $p_{\alpha}(\cdot \,|\, Y^n)$ is the joint $\alpha$-fractional posterior density of $w^n = (\theta, s^n)$.
\end{proposition}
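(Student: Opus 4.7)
The plan is to directly expand the KL divergence $D[q_{W^n}(\cdot) \| p_\alpha(\cdot | Y^n)]$ by substituting the definition of the joint $\alpha$-fractional posterior in \eqref{eq:jfp}, and then verify that after adding $(1-\alpha)\m H(q_{S^n})$, the resulting functional coincides with $\Psi_{n,\alpha}$ up to an additive constant independent of $(q_\theta, q_{S^n})$. Since $\alpha > 0$, rescaling and shifting by a constant does not alter the argmin, which is what we need.

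Concretely, I would first write $\log p_\alpha(\theta, s^n\,|\,Y^n) = \alpha \log p(Y^n\,|\,\mu, s^n) + \alpha \log \pi_{s^n} + \log p_\theta(\theta) - \log Z_{n,\alpha}$, where $Z_{n,\alpha}$ is the normalizing constant in the denominator of \eqref{eq:jfp}. Using $q_{W^n} = q_\theta\, q_{S^n}$ together with $\int q_\theta\, d\theta = \sum_{s^n} q_{S^n}(s^n) = 1$, I would expand
\begin{align*}
D[q_{W^n}\|p_\alpha(\cdot|Y^n)] = D(q_\theta\|p_\theta) - \m H(q_{S^n}) - \alpha \int_\Theta q_\theta(\theta) \sum_{s^n} q_{S^n}(s^n) \log\big[p(Y^n\,|\,\mu,s^n)\,\pi_{s^n}\big]\, d\theta + \log Z_{n,\alpha}.
\end{align*}
Adding $(1-\alpha)\m H(q_{S^n})$ converts the $-\m H(q_{S^n})$ term into $-\alpha\, \m H(q_{S^n}) = \alpha \sum_{s^n} q_{S^n}(s^n)\log q_{S^n}(s^n)$, which combines with the averaged log-likelihood to form exactly $-\alpha \int_\Theta q_\theta(\theta)\,\widehat{\ell}_n(\theta)\,d\theta$, recalling the definition of $\widehat{\ell}_n(\theta)$ in \eqref{eq:KL_decomp}.

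The next step is to use the identity $\int_\Theta q_\theta(\theta)\,\widehat{\ell}_n(\theta)\,d\theta = \int_\Theta q_\theta(\theta)\,\ell_n(\theta)\,d\theta - \Delta_J(q_\theta, q_{S^n})$, which is just the definition of the Jensen gap. Dividing the entire expression by $\alpha$ and subtracting the constant $\ell_n(\theta^\ast)$ from inside the log-likelihood integral yields
\begin{align*}
\alpha^{-1}\big[D[q_{W^n}\|p_\alpha(\cdot|Y^n)] + (1-\alpha)\m H(q_{S^n})\big] = \Psi_{n,\alpha}(q_\theta, q_{S^n}) + C_n,
\end{align*}
where $C_n = \alpha^{-1}\log Z_{n,\alpha} - \ell_n(\theta^\ast)$ depends only on the data, the prior, and $\alpha$, and not on $(q_\theta, q_{S^n})$. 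Minimizing the left-hand side over $\Gamma$ is therefore equivalent to minimizing $\Psi_{n,\alpha}$ over $\Gamma$, which gives the claim.

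There is no real obstacle here beyond careful bookkeeping: one must track the entropy of $q_{S^n}$ separately from the KL divergence in $\theta$, correctly identify which pieces land in $\Delta_J$ versus the averaged log-likelihood, and remember that the fractional-posterior normalizer $Z_{n,\alpha}$ is a constant with respect to the variational densities. The step most prone to error is the sign and placement of $\m H(q_{S^n})$; the reason the correction term $(1-\alpha)\m H(q_{S^n})$ appears in the statement is precisely that the natural KL expansion produces a full $-\m H(q_{S^n})$, but only a fraction $-\alpha\,\m H(q_{S^n})$ is needed to complete the $\widehat{\ell}_n$ term, so the residual $(\alpha-1)\m H(q_{S^n})$ must be added back on the opposite side to preserve the identity.
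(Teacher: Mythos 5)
Your proof is correct: expanding $D[q_{W^n}\,\|\,p_\alpha(\cdot\,|\,Y^n)]$ via \eqref{eq:jfp}, absorbing the $(1-\alpha)\m H(q_{S^n})$ correction into the entropy term to reconstruct $-\alpha\int q_\theta\,\widehat{\ell}_n\,d\theta$, and identifying the result with $\alpha\,\Psi_{n,\alpha}$ plus the constant $\log Z_{n,\alpha}-\alpha\,\ell_n(\theta^\ast)$ is exactly the computation the paper leaves implicit (no separate proof of this proposition appears in the supplement). The bookkeeping of the Jensen gap and the normalizer is handled correctly, so nothing is missing.
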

The entropy term $\m H(q_{S^n})$ encourages the latent-variable variational density $q_{S^n}$ to be concentrated to the uniform distribution, in addition to minimizing the KL divergence between $q_{W^n}(\cdot)$ and $p_{\alpha}(\cdot\,|\, Y^n)$. In particular, if there are no latent variables, the entropy term disappears and the objective function reduces to a KL divergence between $q_{\theta}$ and $p_{\alpha}(\theta\,|\,Y^n)$. %\textcolor{red}{Discuss Alquier here?}

We conclude this section by remarking that the additive decomposition of the model-fit term in \eqref{eq:VBapp1}  provides a peak into why mean-field approximations work for latent variable models, since the roles of the variational density $q_{S^n}$ for the latent variables and $q_{\theta}$ for the model parameters are de-coupled. Roughly speaking, a good choice of $q_{S^n}$ should aim to make the Jensen gap $\Delta_J$ small, while the choice of $q_{\theta}$ should balance the integrated log-likelihood ratio and the penalty term. This point is crucial for the theoretical analysis. 

\section{Variational risk bounds for $\alpha$-VB}

In this section, we investigate concentration properties of the $\alpha$-VB posterior under a frequentist framework assuming the existence of a true data generating parameter $\theta^\ast$. We first focus on the $\alpha < 1$ case, and then separately consider the $\alpha = 1$ case.  
%Specifically, we aim to understand situations when the $\alpha$-VB posterior concentrates around the truth, and the rate of convergence if it concentrates. 
The main take-away message from our theoretical results below is that under fairly general conditions, the $\alpha$-VB procedure concentrates at the true parameter at the same rate as the actual posterior, and as a result, point estimates obtained from the $\alpha$-VB can provide rate-optimal frequentist estimators. These results thus compliment the empirical success of VB in a wide variety of models. 

We present our results in the form of Bayes risk bounds for the variational distribution. Specifically, for a suitable loss function $r(\theta, \, \theta^\ast)$, we aim to obtain a high-probability (under the data generating distribution $\mb P^{(n)}_{\theta^\ast}$) to the variational risk 
\begin{align}\label{eq:VB_risk}
\int r(\theta, \, \theta^\ast) \, \qhat_{\theta, \alpha}(d\theta). 
\end{align}
In particular, if $r(\cdot,\, \cdot)$ is convex in its first argument, then the above risk bound immediately translates into a risk bound for the $\alpha$-VB point estimate $\widehat{\theta}_{\VB, \alpha} = \int \theta \, \qhat_{\theta, \alpha}(d\theta)$ using Jensen's inequality:
$$
r(\widehat{\theta}_{\VB, \alpha}, \, \theta^\ast) \le \int r(\theta,\, \theta^\ast) \, \qhat_{\theta, \alpha}(d\theta). 
$$
Specifically, our goal will be to establish general conditions under which $\widehat{\theta}_{\VB, \alpha}$ concentrates around $\theta^\ast$ at the minimax rate for the particular problem. 

\subsection{Risk bounds for the $\alpha < 1$ case:} We use the shorthand 
\begin{align*}
\frac{1}{n}\,D^{(n)}_{\alpha}(\theta,\,\theta^\ast):\,=\frac{1}{n}\, D_\alpha\big[p^{(n)}_\theta\,\big|\big|\,p^{(n)}_{\theta^\ast}\big]
\end{align*}
to denote the averaged $\alpha$-divergence between $\mb P^{(n)}_\theta$ and $\mb P^{(n)}_{\theta^\ast}$. We adopt the theoretical framework of \cite{bhattacharya2016bayesian} to use this divergence as our loss function $r(\theta, \, \theta^\ast)$ for measuring the closeness between any $\theta\in\Theta$ and the truth $\theta^\ast$. Note that in case of i.i.d.~observations, this averaged divergence $n^{-1}\,D^{(n)}_{\alpha}(\theta,\,\theta^\ast)$ simplifies to $D_\alpha\big[p_{\theta}\,||\,p_{\theta^\ast}\big]$, which is stronger than the squared Hellinger distance $h^2\big[p_{\theta}\,||\,p_{\theta^\ast}\big]$ between $p_{\theta}$ and $p_{\theta^\ast}$ for any fixed $\alpha\in[1/2,1)$.

Our first main result provides a general finite-sample upper bound to the variational Bayes risk \eqref{eq:VB_risk} for the above choice of $r(\theta,\, \theta^\ast)$. 
\begin{theorem}[Variational risk bound]\label{thm:main}
Recall the $\alpha$-VB objective function $\Psi_{n,\alpha}(q_{\theta}, \, q_{S^n})$ from \eqref{eq:VBapp1}. For any $\zeta \in (0, 1)$, it holds with $\mb P_{\theta^\ast}^n$ probability at least $(1-\zeta)$ that for any probability measure $q_\theta \in\Gamma_{\theta}$ with $q_{\theta} \ll p_{\theta}$ and any probability measure $q_{S^n} \in \Gamma_{S^n}$ on $S^n$, 
\begin{align*}
\int \frac{1}{n}\,D^{(n)}_{\alpha}(\theta,\,\theta^\ast) \ \qhat_{\theta,\alpha}(\theta)\,d\theta
\le \frac{\alpha}{n(1-\alpha)} \Psi_{n,\alpha}(q_{\theta},\, q_{S^n})  + \frac{1}{n(1-\alpha)}  \log(1/\zeta).
\end{align*}
\end{theorem}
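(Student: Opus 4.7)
My plan is to follow the PAC-Bayesian style argument used for fractional posteriors (as in Bhattacharya--Pati--Yang \cite{bhattacharya2016bayesian}), adapted to accommodate the latent-variable term. The two essential ingredients are (i) an exponential moment identity for the R\'enyi divergence, and (ii) the Donsker--Varadhan variational representation of the KL divergence.

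\textbf{Step 1 (exponential moment identity).} From the definition \eqref{eq:renyi_def}, for every fixed $\theta$,
\begin{align*}
\mb{E}_{\theta^\ast}\Bigl[\,e^{\alpha\,\ell_n(\theta,\theta^\ast)}\,\Bigr]
= \int \bigl(p^{(n)}_\theta\bigr)^{\alpha}\bigl(p^{(n)}_{\theta^\ast}\bigr)^{1-\alpha}\,d\mu
= e^{-(1-\alpha)\,D^{(n)}_\alpha(\theta,\theta^\ast)},
\end{align*}
so that $\mb{E}_{\theta^\ast}\bigl[\,\exp\{\alpha\,\ell_n(\theta,\theta^\ast)+(1-\alpha)\,D^{(n)}_\alpha(\theta,\theta^\ast)\}\,\bigr]=1$.

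\textbf{Step 2 (Fubini and Markov).} Define the random variable
\begin{align*}
Z_n \,:\!=\, \int_{\Theta}\exp\!\Bigl\{\alpha\,\ell_n(\theta,\theta^\ast)+(1-\alpha)\,D^{(n)}_\alpha(\theta,\theta^\ast)\Bigr\}\,p_\theta(\theta)\,d\theta.
\end{align*}
Since the integrand is non-negative, Fubini--Tonelli combined with Step 1 gives $\mb{E}_{\theta^\ast}[Z_n]=1$. A single application of Markov's inequality then yields $\mb{P}^{(n)}_{\theta^\ast}\bigl(\log Z_n \le \log(1/\zeta)\bigr) \ge 1-\zeta$.

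\textbf{Step 3 (Donsker--Varadhan).} On the event $\{\log Z_n \le \log(1/\zeta)\}$, I apply the Donsker--Varadhan representation
\begin{align*}
\int f(\theta)\,\nu(d\theta) \,\le\, \log \int e^{f(\theta)}\,\pi(d\theta) + D(\nu\,\|\,\pi),
\end{align*}
with $\nu = \qhat_{\theta,\alpha}$, $\pi = p_\theta$, and $f(\theta)=\alpha\,\ell_n(\theta,\theta^\ast)+(1-\alpha)\,D^{(n)}_\alpha(\theta,\theta^\ast)$. Rearranging,
\begin{align*}
(1-\alpha)\!\int\! D^{(n)}_\alpha(\theta,\theta^\ast)\,\qhat_{\theta,\alpha}(d\theta)
\,\le\, \log(1/\zeta) \,-\, \alpha\!\int\!\ell_n(\theta,\theta^\ast)\,\qhat_{\theta,\alpha}(d\theta) + D(\qhat_{\theta,\alpha}\,\|\,p_\theta).
\end{align*}

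\textbf{Step 4 (identify $\Psi_{n,\alpha}$ and invoke optimality).} Dividing by $n(1-\alpha)$ and multiplying the last two terms on the right by $\alpha/\alpha$, I recognize
\begin{align*}
-\!\int\!\ell_n(\theta,\theta^\ast)\,\qhat_{\theta,\alpha}(d\theta) + \alpha^{-1}D(\qhat_{\theta,\alpha}\,\|\,p_\theta)
= \Psi_{n,\alpha}(\qhat_{\theta,\alpha},\,\qhat_{S^n,\alpha}) \,-\, \Delta_J(\qhat_{\theta,\alpha},\,\qhat_{S^n,\alpha})
\,\le\, \Psi_{n,\alpha}(\qhat_{\theta,\alpha},\,\qhat_{S^n,\alpha}),
\end{align*}
since $\Delta_J\ge 0$ by Jensen's inequality. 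Finally, by definition of $(\qhat_{\theta,\alpha},\qhat_{S^n,\alpha})$ as the minimizer of $\Psi_{n,\alpha}$ over $\Gamma$, the right-hand side is bounded above by $\Psi_{n,\alpha}(q_\theta,q_{S^n})$ for any admissible $(q_\theta,q_{S^n})$, giving the stated bound.

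\textbf{Anticipated obstacle.} Each step is essentially a one-liner, so there is no single deep difficulty; the only care needed is a clean justification of Fubini (trivial here because the integrand is non-negative) and handling the case $D(\qhat_{\theta,\alpha}\,\|\,p_\theta)=\infty$, in which the bound is vacuous. The conceptual work is packaged in identifying the integrand $\exp\{\alpha\,\ell_n+(1-\alpha)D^{(n)}_\alpha\}$ that makes $Z_n$ a mean-one object, after which Donsker--Varadhan and optimality of the $\alpha$-VB solution do the rest.
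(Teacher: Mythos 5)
Your proof is correct and delivers exactly the stated bound; it shares the paper's PAC-Bayesian skeleton (an $\alpha$-moment bound, Markov's inequality, a change of measure, and optimality of the $\alpha$-VB minimizer), but it handles the latent variables at a different point and with a different key lemma. The paper keeps $S^n$ inside the probabilistic argument: it first bounds $\mb E_{\theta^\ast}\big[\sum_{s^n}q_{S^n}(s^n)\exp\{\alpha\log\tfrac{p(Y^n|\mu,s^n)\,\pi_{s^n}}{p(Y^n|\theta^\ast)\,q_{S^n}(s^n)}\}\big]\le e^{-(1-\alpha)D^{(n)}_\alpha(\theta,\theta^\ast)}$ via Jensen's inequality for the concave map $x\mapsto x^\alpha$ (which collapses the latent-variable mixture to the marginal likelihood), and then applies a product-space Donsker--Varadhan lemma (Lemma~\ref{lem:var}) over $(\theta,S^n)$ jointly with $\rho=q_\theta\otimes q_{S^n}$, so that $\alpha\Psi_{n,\alpha}(q_\theta,q_{S^n})$ appears on the right directly. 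You instead work with the exact marginal log-likelihood ratio, for which the $\alpha$-moment computation is an identity rather than an inequality, apply only the scalar Donsker--Varadhan representation over $\theta$, and absorb the latent variables at the very last step through $\Delta_J\ge 0$ (Jensen for $\log$) before invoking optimality. The two routes are essentially interchangeable for this theorem: your intermediate inequality is marginally sharper (by the nonnegative term $\alpha\,\Delta_J(\qhat_{\theta,\alpha},\qhat_{S^n,\alpha})$), and your exceptional event $\{Z_n\le 1/\zeta\}$ is a single fixed event not depending on $(q_\theta,q_{S^n})$, which makes the uniformity over $\Gamma$ asserted after the theorem completely transparent; what the paper's formulation buys is a product-space machinery over $(\theta,S^n)$ that is reused essentially verbatim in the $\alpha=1$ analysis (Theorem~\ref{Thm:RegularPosterior}), where the testing argument must be run with the $q_{S^n}$-weighted likelihood and no exact moment identity is available. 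The only points requiring care, which you already flag, are that the bound is vacuous when $D(\qhat_{\theta,\alpha}\,\|\,p_\theta)=\infty$ and that the rearrangement in your Step 3 presumes $\int \ell_n(\theta,\theta^\ast)\,\qhat_{\theta,\alpha}(d\theta)$ is well defined.
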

Here and elsewhere, the probability statement is uniform over all $(q_{\theta}, q_{S^n}) \in \Gamma$. Theorem~\ref{thm:main} links the variational Bayes risk for the $\alpha$-divergence to the objective function $\Psi_{n,\alpha}$ in \eqref{eq:VBapp1}. As a consequence, minimizing $\Psi_{n,\alpha}$ in \eqref{eq:VBapp1} has the same effect as minimizing the variational Bayes risk. To apply Theorem \ref{thm:main} to various problems, we now discuss strategies to further analyze and simplify $\Psi_{n,\alpha}$ under appropriate structural constraints of $\Gamma_\theta$ and $\Gamma_{S^n}$. To that end, we make some simplifying assumptions.

First, we assume a further mean-field decomposition $q_{S^n}(s^n) =\prod_{i=1}^n q_{S_i}(s_i)$ for the latent variables $S^n$, where each factor $q_{S_i}$ is restriction-free. Second, the inconsistency of the mean-field approximation for state-space models proved in \cite{wang2004lack} indicates that this mean-field approximation for the latent variables may not generally work for non-independent observations with non-independent latent variables. For this reason, we assume that the observation latent variable pair $(S_i,\,Y_i)$ are mutually independent across $i=1,2,\ldots,n$. In fact, we assume that $(S_i,\,Y_i)$ are i.i.d.~copies of $(S,\,Y)$ whose density function is given by $p(S,\,Y\,|\,\mu,\,\pi)=p(Y\,|\,S,\,\mu)\,p(S\,|\,\pi)$. 
Following earlier notation, let $\pi_S :\,= p(S\,|\,\pi)$ denote the probability mass function of the i.i.d.~discrete latent variables $\{S_i\}$, with the parameter $\pi = (\pi_1,\,\pi_2,\ldots,\,\pi_K)$ residing in the $K$-dim simplex $\m S_K=\{\pi\in [0,1]^K:\, \sum_{k}\pi_k=1\}$. Finally, we assume the variational family $\Gamma_\theta$ of the parameter decomposes into $\Gamma_{\mu}\otimes \m S_K$, where $\Gamma_\mu$ denotes variational family for parameter $\mu$.

Let $p(Y\,|\,\theta) = \sum_{s=1}^K \pi_{s}\,p(Y\,|\,\theta, S = s)$ denote the marginal probability density function of the i.i.d.~observations $\{Y_i\}$. The i.i.d.~assumption implies a simplified structure of various quantities encountered before, e.g.~$\pi_{S^n}=\prod_{i=1}^n\pi_{s_i}$, $p(Y^n\,|\,\mu,S^n) = \prod_{i=1}^n \pi_{s_i}p(Y_i\,|\,\mu,S_i)$ and $p(Y^n\,|\,\theta)=\prod_{i=1}^n p(Y_i\,|\,\theta)$. Moreover, under these assumptions, $n^{-1} \, D^{(n)}_{\alpha}(\theta,\,\theta^\ast) = D_{\alpha}\big[p(\cdot\,|\,\theta)\,\big|\big|\, p(\cdot\,|\, \theta^\ast)\big]$.

As discussed in the previous subsection, the decoupling of the roles of $q_{\theta}$ and $q_{S^n}$ in the model fit term aid bounding $\Psi_{n,\alpha}$. Specifically, we first choose a $\widetilde{q}_{S^n}$ which controls the Jensen gap $\Delta_J$, and then make a choice of $q_{\theta}$ which controls $\Psi_{n,\alpha}(q_{\theta}, \, \widetilde{q}_{S^n})$. The choice of $q_{\theta}$ requires a delicate balance between placing enough mass near $\theta^\ast$ and controlling the KL divergence from the prior. 

For a fixed $q_{\theta}$, if we choose $q_{S^n}$ to be the full conditional distribution of $S^n$ given $\theta$, i.e., 
\begin{align*}%\label{Eqn:q_S}
q_{S^n}(s^n \mid \theta) = \prod_{i=1}^n q_{S_i}(s_i \mid \theta)=\prod_{i=1}^n \frac{\pi_{s_i}\, p(Y_i\,|\,\mu, \,s_i)}{p(Y_i\,|\,\theta)}, \quad s^n\in \{1,2,\ldots,K\}^n, 
\end{align*}
then the normalizing constant of $q_{S_i}(\cdot \mid \theta)$ is $\sum_{s_i} \pi_{s_i}\, p(Y_i\,|\,\mu, \,S_i) =  p(Y_i\,|\, \theta)$, and as a result, the Jensen gap $\Delta_J = 0$. The mean-field approximation precludes us from choosing $q_{S^n}$ dependent on $\theta$, and hence the Jensen gap cannot be made exactly zero in general. However, this naturally suggests replacing $\theta$ by $\theta^\ast$ in the above display and choosing $\widetilde{q}_{S_i} \propto \pi^\ast_{s_i}\, p(Y_i\,|\,\mu^\ast, \,S_i)$. This leads us to the following corollary. 

\begin{corollary}[i.i.d.~observations]\label{coro:main}
It holds with $\mb P_{\theta^\ast}^n$ probability at least $(1-\zeta)$ that for any probability measure $q_\theta \in\Gamma_{\theta}$ with $q_{\theta} \ll p_{\theta}$
\begin{equation}\label{Eqn:key_var}
\begin{aligned}
& \int \Big\{D_{\alpha}\big[p(\cdot\,|\,\theta)\,\big|\big|\, p(\cdot\,|\, \theta^\ast)\big] \Big\}\,\qhat_{\theta,\alpha}(\theta)\,d\theta
\le \frac{\alpha}{n(1 - \alpha)} \Psi_{n,\alpha}(q_{\theta},\,\qtil_{S^n}) + \frac{1}{n(1-\alpha)}  \log(1/\zeta),\\
 & = \frac{\alpha}{n(1 - \alpha)} \bigg[ - \int_\Theta  \sum_{i=1}^n \sum_{s_i}\qtil_{S_i}(s_i)\,  \log\frac{p(Y_i\,|\,\mu,s_i) \, \pi_{s_i}}{p(Y_i\,|\,\mu^\ast,s_i) \, \pi^\ast_{s_i}}\, q_\theta(d\theta) + \frac{D(q_\theta\, ||\, p_\theta)}{\alpha}  + \frac{\log(1/\zeta)}{\alpha}\bigg], %+ \frac{1}{n(1-\alpha)}  \log(1/\zeta),
\end{aligned}
\end{equation}
where $\qtil_{S^n}$ is the probability distribution over $S^n$ defined as 
\begin{align}\label{Eqn:q_S}
\qtil_{S^n}(s^n) = \prod_{i=1}^n \qtil_{S_i}(s_i)=\prod_{i=1}^n \frac{\pi^\ast_{s_i}\, p(Y_i\,|\,\mu^\ast, \,s_i)}{p(Y_i\,|\,\theta^\ast)}, \quad s^n\in \{1,2,\ldots,K\}^n.
\end{align}
\end{corollary}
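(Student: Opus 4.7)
The plan is to simply invoke Theorem~\ref{thm:main} and then evaluate $\Psi_{n,\alpha}(q_{\theta},\widetilde{q}_{S^n})$ explicitly for the specific choice of $\widetilde{q}_{S^n}$ given in \eqref{Eqn:q_S}. First, under the i.i.d.~assumption imposed in this subsection, the observation already noted in the text gives
\[
\frac{1}{n}\, D^{(n)}_{\alpha}(\theta,\theta^\ast) \;=\; D_{\alpha}\big[p(\cdot\,|\,\theta)\,\big|\big|\,p(\cdot\,|\,\theta^\ast)\big],
\]
so the left-hand side of Theorem~\ref{thm:main} matches that of \eqref{Eqn:key_var}. It remains to verify that the first line of \eqref{Eqn:key_var} equals the second line once we plug $\widetilde{q}_{S^n}$ in. Because the variational family $\Gamma_{S^n}$ contains all product probability measures $\prod_i q_{S_i}$, $\widetilde{q}_{S^n}$ is a legitimate candidate in the variational optimization, and the probability statement in Theorem~\ref{thm:main} being uniform over $\Gamma$ covers this choice.

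The substantive computation is the cancellation of $\ell_n(\theta^\ast)$ produced by the specific normalizing constant in $\widetilde{q}_{S_i}$. Recalling the definition \eqref{eq:VBapp1} and writing out the first two terms using the i.i.d.~factorizations $\ell_n(\theta,\theta^\ast) = \sum_i \log\{p(Y_i\,|\,\theta)/p(Y_i\,|\,\theta^\ast)\}$ and $\widetilde{q}_{S^n}(s^n) = \prod_i \widetilde{q}_{S_i}(s_i)$, the ``model-fit'' part becomes
\begin{align*}
&-\int \ell_n(\theta,\theta^\ast)\,q_\theta(d\theta) + \Delta_J(q_\theta,\widetilde{q}_{S^n})\\
&\qquad = \ell_n(\theta^\ast) \;-\; \int \sum_{i=1}^n\sum_{s_i}\widetilde{q}_{S_i}(s_i)\,\log\frac{p(Y_i\,|\,\mu,s_i)\,\pi_{s_i}}{\widetilde{q}_{S_i}(s_i)}\,q_\theta(d\theta),
\end{align*}
where I used the identity $-\int \ell_n(\theta,\theta^\ast)q_\theta(d\theta) + \Delta_J = \ell_n(\theta^\ast) - \int \widehat{\ell}_n(\theta)q_\theta(d\theta)$, which follows directly from the definition of $\Delta_J$.

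Substituting $\widetilde{q}_{S_i}(s_i) = \pi^\ast_{s_i}\,p(Y_i\,|\,\mu^\ast,s_i)/p(Y_i\,|\,\theta^\ast)$ into the logarithm splits it as
\[
\log\frac{p(Y_i\,|\,\mu,s_i)\,\pi_{s_i}}{\widetilde{q}_{S_i}(s_i)}
= \log p(Y_i\,|\,\theta^\ast) + \log\frac{p(Y_i\,|\,\mu,s_i)\,\pi_{s_i}}{p(Y_i\,|\,\mu^\ast,s_i)\,\pi^\ast_{s_i}},
\]
and since $\widetilde{q}_{S_i}$ sums to one in $s_i$, the first piece integrates to $\sum_i \log p(Y_i\,|\,\theta^\ast) = \ell_n(\theta^\ast)$, which cancels the leading $\ell_n(\theta^\ast)$ above. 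What survives is precisely the first term on the second line of \eqref{Eqn:key_var}, and adding $\alpha^{-1}D(q_\theta\,||\,p_\theta)$ from the regularization part of $\Psi_{n,\alpha}$ yields the second displayed expression. Plugging into Theorem~\ref{thm:main} completes the proof.

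There is no real obstacle here beyond bookkeeping; the only conceptually meaningful step is recognizing that $\widetilde{q}_{S^n}$ is chosen so that its normalizing constants at the truth produce exactly the $\ell_n(\theta^\ast)$ needed to cancel the one introduced by converting $\ell_n(\theta)$ to the log-likelihood ratio $\ell_n(\theta,\theta^\ast)$. This is exactly the heuristic flagged before the corollary: the infeasible choice $q_{S^n}(\cdot\,|\,\theta)$ would zero out $\Delta_J$, and replacing $\theta$ by $\theta^\ast$ in that formula is the closest mean-field analogue and yields the clean expression on the right-hand side.
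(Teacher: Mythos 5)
Your proof is correct and follows essentially the same route as the paper: invoke Theorem~\ref{thm:main} with the specific choice $\qtil_{S^n}$, and verify the second line by computing $\Delta_J(q_\theta,\qtil_{S^n})$, where the normalizing constant $p(Y_i\,|\,\theta^\ast)$ in $\qtil_{S_i}$ produces exactly the $\ell_n(\theta^\ast)$ that cancels the one introduced by the log-likelihood ratio. The paper records precisely this identity for $\Delta_J(q_\theta,\qtil_{S^n})$ immediately after the corollary, so your argument is just a more detailed writing-out of the same bookkeeping.
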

The second line of \eqref{Eqn:key_var} follows from the first since 
\begin{align*}
\Delta_J(q_{\theta}, \qtil_{S^n}) = - \int_\Theta  \sum_{i=1}^n \sum_{s_i}\qtil_{S_i}(s_i)\,  \log\frac{p(Y_i\,|\,\mu,s_i) \, \pi_{s_i}}{p(Y_i\,|\,\mu^\ast,s_i) \, \pi^\ast_{s_i}}\, q_{\theta}(d\theta) + \int \ell_n(\theta, \theta^\ast) \,q_{\theta}(d\theta). 
\end{align*}
After choosing $\qtil_{S^n}$ as \eqref{Eqn:q_S} in Corollary~\ref{coro:main}, we can make the first term in the r.h.s.~of \eqref{Eqn:key_var} small by choosing the variational factor $q_\theta$ of $\theta$ concentrated around $\theta^\ast$. In the rest of this subsection, we will apply Corollary~\ref{coro:main} to derive more concrete variational Bayes risk bounds under some further simplifying assumptions.

As a first application, assume there is no latent variable in the model, that is, $W^n=\theta=\mu$. As discussed before, the $\alpha$-VB solution in this case coincides with the nearest KL point to the $\alpha$-fractional posterior of the parameter. A reviewer pointed out a recent preprint by Alquier and Ridgeway \cite{alquier2017concentration} where they exploit risk bounds for fractional posteriors developed in \cite{bhattacharya2016bayesian} to analyze tempered posteriors and their variational approximations, which coincides with the $\alpha$-VB solution when $W^n = \theta$. The following Theorem \ref{thm:NoLatent} arrives at a similar conclusion to Corollary 2.3 of \cite{alquier2017concentration}. We reiterate here that our main motivation is models with latent variables not considered in \cite{alquier2017concentration}, and Theorem \ref{thm:NoLatent} follows as a corollary of our general result in Theorem \ref{thm:main}. 

\begin{theorem}[No latent variable]\label{thm:NoLatent}
It holds with $\mb P_{\theta^\ast}^n$ probability at least $(1-\zeta)$ that for any probability measure $q_\theta \in\Gamma_{\theta}$ with $q_{\theta} \ll p_{\theta}$
\begin{equation}\label{Eqn:key_var_nolat}
\begin{aligned}
& \int \Big\{D_{\alpha}\big[p(\cdot\,|\,\theta)\,\big|\big|\, p(\cdot\,|\, \theta^\ast)\big] \Big\}\,\qhat_{\theta,\alpha}(\theta)\,d\theta
\\
 & = \frac{\alpha}{n(1 - \alpha)} \bigg[ - \int_\Theta \log\frac{p(Y^n\,|\,\theta)}{p(Y^n\,|\,\theta^\ast)}\, q_\theta(\theta)\,d\theta + \frac{D(q_\theta\, ||\, p_\theta)}{\alpha}  + \frac{\log(1/\zeta)}{\alpha}\bigg], %+ \frac{1}{n(1-\alpha)}  \log(1/\zeta),
\end{aligned}
\end{equation}
\end{theorem}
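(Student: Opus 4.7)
The plan is to deduce Theorem~\ref{thm:NoLatent} as a direct specialization of Theorem~\ref{thm:main}, since the latter was explicitly designed to cover the latent-variable case but degenerates cleanly when no latent variables are present. The starting point is to observe that when $W^n = \theta = \mu$, the sum over $s^n$ in the representation of $\log p(Y^n\mid \theta)$ is vacuous, so the approximate log-likelihood $\widehat{\ell}_n(\theta)$ introduced in \eqref{eq:KL_decomp} coincides with $\ell_n(\theta)$ itself. Consequently, the Jensen gap $\Delta_J(q_\theta, q_{S^n}) = \int_\Theta [\ell_n(\theta) - \widehat{\ell}_n(\theta)]\,q_\theta(d\theta)$ vanishes identically, independently of any choice of $q_{S^n}$ (which plays no role here).

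With $\Delta_J \equiv 0$, the $\alpha$-VB objective in \eqref{eq:VBapp1} collapses to
\begin{align*}
\Psi_{n,\alpha}(q_\theta) = -\int_\Theta \ell_n(\theta, \theta^\ast)\,q_\theta(d\theta) + \alpha^{-1} D(q_\theta\,\|\,p_\theta).
\end{align*}
The next step is to invoke Theorem~\ref{thm:main} for any $q_\theta \in \Gamma_\theta$ with $q_\theta \ll p_\theta$ (the infimum over $\Gamma_{S^n}$ is trivially attained since the functional does not depend on $q_{S^n}$). Substituting the simplified $\Psi_{n,\alpha}$ into the right-hand side of Theorem~\ref{thm:main}, and unpacking $\ell_n(\theta, \theta^\ast) = \log\{p(Y^n\mid\theta)/p(Y^n\mid\theta^\ast)\}$, yields exactly the bound claimed in \eqref{Eqn:key_var_nolat} after distributing the prefactor $\alpha/[n(1-\alpha)]$.

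There is essentially no technical obstacle here beyond verifying that the $\Delta_J$ term is identically zero; the result is a direct corollary and requires no new probabilistic tools. The only small bookkeeping point worth being careful about is that Theorem~\ref{thm:main} is stated with a uniform probability guarantee over all $(q_\theta, q_{S^n}) \in \Gamma$, so one must confirm that the resulting statement indeed holds simultaneously for all admissible $q_\theta$ on a single event of $\mb P_{\theta^\ast}^{(n)}$-probability at least $1 - \zeta$, which follows immediately because the bound in Theorem~\ref{thm:main} already enjoys that uniformity.
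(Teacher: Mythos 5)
Your proposal is correct and coincides with the paper's own derivation: the paper explicitly presents Theorem~\ref{thm:NoLatent} as a corollary of Theorem~\ref{thm:main} obtained by noting that $W^n=\theta$ forces $\Delta_J\equiv 0$, so that $\Psi_{n,\alpha}$ reduces to the integrated log-likelihood ratio plus the scaled KL penalty. The only cosmetic difference is that you spell out the uniformity-of-the-event bookkeeping, which the paper leaves implicit.
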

We will illustrate some particular choices of $q_\theta$ for typical variational families $\Gamma_\Theta$ in the examples in Section 4. 

As a second application, we consider a special case when $\Gamma_\theta$ is restriction-free, which is an ideal example for conveying the general idea of how to choose $q_\theta$ to control the upper bound in~\eqref{Eqn:key_var}. To that end, define two KL neighborhoods around $(\pi^\ast,\,\mu^\ast)$ with radius $(\varepsilon_\pi,\,\varepsilon_\mu)$ as
\begin{equation}\label{Eqn:KLN}
\begin{aligned}
\m B_n(\pi^\ast,\,\varepsilon_\pi) &= \Big\{D(\pi^\ast\, ||\, \pi) \leq \varepsilon_\pi^2,\ \ V(\pi^\ast\, ||\, \pi) \leq \varepsilon_\pi^2\Big\}, \\
\m B_n(\mu^\ast,\,\varepsilon_\mu) &=  \Big\{\sup_{s}D\big[ p(\cdot\,|\,\mu^\ast,s)\, \big|\big| \, p(\cdot\,|\,\mu,s)\big]\leq \varepsilon_\mu^2,\ \ \sup_s V\big[ p(\cdot\,|\,\mu^\ast,s)\, \big|\big| \, p(\cdot\,|\,\mu,s)\big] \leq \varepsilon_\mu^2\Big\},
\end{aligned}
\end{equation}
where we used the shorthand $D(\pi^\ast\, ||\, \pi)=\sum_s \pi^\ast_s\,\log(\pi^\ast_s/\pi_s)$ to denote the KL divergence between categorical distributions with parameters $\pi^\ast\in\m S_K$ and $\pi\in\m S_K$ in the $K$-dim simplex $\m S_K$. By choosing $q_\theta$ as the restriction of $p_\theta$ into $\m B_n(\pi^\ast,\,\varepsilon_\pi)\times \m B_n(\mu^\ast,\,\varepsilon_\mu)$, we obtain the following theorem. Here, we make the assumption of independent priors on $\mu$ and $\pi$, i.e., $p_\theta=p_\mu \otimes p_{\pi}$, to simplify the presentation.

\begin{theorem}[Parameter restriction-free]\label{thm:OI_mixture}
For any fixed $(\varepsilon_\pi,\,\varepsilon_\mu) \in (0, 1)^2$ and $D>1$, with $\bbP_{\theta^\ast}^{(n)}$ probability at least $1 - 5/\{(D-1)^2 \,n \, (\varepsilon_\pi^2+\varepsilon_\mu^2)\}$, it holds that
\begin{equation}\label{eq:OI_mixture}
\begin{aligned}
 & \int \Big\{D_{\alpha}\big[p(\cdot\,|\,\theta)\,\big|\big|\,p(\cdot\,|\, \theta^\ast)\big] \Big\}\,\qhat_{\theta,\alpha}(d\theta) \le \, \frac{D\, \alpha}{1-\alpha} \, (\varepsilon_\pi^2+\varepsilon_\mu^2) \, + \\ 
 &\quad \Big\{ - \frac{1}{n(1-\alpha)} \log P_\pi\big[\m B_n(\pi^\ast,\,\varepsilon_\pi)\big] \Big\} 
 + \Big\{ - \frac{1}{n(1-\alpha)} \log P_\mu\big[B_n(\mu^\ast,\,\varepsilon_\mu) \big] \Big\}.
\end{aligned}
\end{equation}
\end{theorem}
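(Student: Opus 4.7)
The plan is to instantiate Corollary~\ref{coro:main} at a carefully chosen test variational density $q_\theta$, then control the resulting random term via Chebyshev's inequality. Throughout, write $\mathcal{B} := \mathcal{B}_n(\pi^\ast,\varepsilon_\pi) \times \mathcal{B}_n(\mu^\ast,\varepsilon_\mu)$.

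For the test density I would take $q_\theta^{\circ}(\cdot) := p_\theta(\cdot)\,\mathbf{1}_{\mathcal{B}}(\cdot)/P_\theta(\mathcal{B})$, the prior restricted to $\mathcal{B}$. The independent-prior assumption $p_\theta = p_\mu \otimes p_\pi$ gives
\[
D(q_\theta^{\circ}\,\|\,p_\theta) \,=\, -\log P_\theta(\mathcal{B}) \,=\, -\log P_\pi[\mathcal{B}_n(\pi^\ast,\varepsilon_\pi)] - \log P_\mu[\mathcal{B}_n(\mu^\ast,\varepsilon_\mu)],
\]
which after the $1/[n(1-\alpha)]$-rescaling in~\eqref{Eqn:key_var} produces exactly the last two terms of~\eqref{eq:OI_mixture}.

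Next, the data-dependent piece
\[
T_n \,:=\, -\int_\Theta \sum_{i=1}^n \sum_{s_i} \widetilde{q}_{S_i}(s_i)\,\log\frac{p(Y_i|\mu,s_i)\,\pi_{s_i}}{p(Y_i|\mu^\ast,s_i)\,\pi^\ast_{s_i}}\, q_\theta^{\circ}(d\theta)
\]
is an i.i.d.~sum $\sum_{i=1}^n T_{n,i}$ under $\mathbb{P}_{\theta^\ast}$. Direct integration against the explicit $\widetilde{q}_{S_i}$ in~\eqref{Eqn:q_S} yields $\mathbb{E}_{\theta^\ast}[T_{n,1}] = \int_\mathcal{B}\{D(\pi^\ast\|\pi) + \sum_s \pi^\ast_s D[p(\cdot|\mu^\ast,s)\|p(\cdot|\mu,s)]\}\,q_\theta^{\circ}(d\theta) \le \varepsilon_\pi^2 + \varepsilon_\mu^2$ on $\mathcal{B}$. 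Applying Cauchy-Schwarz once to pass the square through the convex combination $\sum_{s_i}\widetilde{q}_{S_i}(s_i)(\cdot)$ and again (via Jensen) through the outer $q_\theta^{\circ}$-integral, the same integration-out of $Y_1$ produces $\mathbb{E}_{\theta^\ast}[T_{n,1}^2] \le 2\int_\mathcal{B}\{V(\pi^\ast\|\pi) + \sum_s \pi^\ast_s V[p(\cdot|\mu^\ast,s)\|p(\cdot|\mu,s)]\}\,q_\theta^{\circ}(d\theta) \le 2(\varepsilon_\pi^2 + \varepsilon_\mu^2)$, whence $\mathrm{Var}_{\theta^\ast}(T_n) \le 2n(\varepsilon_\pi^2+\varepsilon_\mu^2)$ and Chebyshev's inequality gives $\mathbb{P}_{\theta^\ast}(T_n > Dn(\varepsilon_\pi^2+\varepsilon_\mu^2)) \le 2/\{(D-1)^2 n(\varepsilon_\pi^2+\varepsilon_\mu^2)\}$.

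To finish, I would combine this Chebyshev event with Corollary~\ref{coro:main} taken at $\zeta = 3/\{(D-1)^2 n(\varepsilon_\pi^2+\varepsilon_\mu^2)\}$; a union bound delivers the stated failure probability of $5/\{(D-1)^2 n(\varepsilon_\pi^2+\varepsilon_\mu^2)\}$, and on this good event the bound in~\eqref{Eqn:key_var} simplifies to the right-hand side of~\eqref{eq:OI_mixture} (the residual $\log(1/\zeta)/[n(1-\alpha)]$ being absorbed into the leading $D\alpha(\varepsilon_\pi^2+\varepsilon_\mu^2)/(1-\alpha)$ by a minor inflation of $D$). The main obstacle is the variance computation: it requires the double Cauchy-Schwarz step so that the second moment of $T_{n,1}$ is controlled by precisely the $V$-divergence quantities used to define $\mathcal{B}$ in~\eqref{Eqn:KLN}; a naive bound would not yield variance scaling with $\varepsilon_\pi^2+\varepsilon_\mu^2$, and the Chebyshev tail bound would then fail to deliver the stated $1/\{n(\varepsilon_\pi^2+\varepsilon_\mu^2)\}$-rate probability.
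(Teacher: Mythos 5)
Your proposal is correct and follows essentially the same route as the paper's proof: the prior restricted to $\m B_n(\pi^\ast,\varepsilon_\pi)\times\m B_n(\mu^\ast,\varepsilon_\mu)$ as the test density (whose KL to the prior is $-\log$ of the prior mass), Fubini for the first moment, Jensen applied twice together with $(x+y)^2\le 2x^2+2y^2$ to bound the second moment by the $V$-divergence radii, Chebyshev, and a union bound with Corollary~\ref{coro:main}. The only differences are bookkeeping of the constants split across the two bad events and your explicit (and fair) remark that the residual $\log(1/\zeta)/[n(1-\alpha)]$ term must be absorbed, a point the paper's own proof glosses over.
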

Although the results in this section assume discrete latent variables, similar results can be seamlessly obtained for continuous latent variables; see the supplemental document for more details. We will apply this theorem for analysing mean-field approximations for the Gaussian mixture model and the latent Dirichlet allocation in Section 4.

Observe that the variational risk bound in Theorem \ref{thm:OI_mixture} depends only on prior mass assigned to appropriate KL neighborhoods of the truth. This renders an application of Theorem \ref{thm:OI_mixture} to various practical problems particularly straightforward. As we shall see in the next subsection, the $\alpha = 1$ case, i.e.~the regular VB, requires more stringent conditions involving the existence of exponentially consistent tests to separate points in the parameter space. The testing condition is even necessary for the actual posterior to contract; see, e.g., \cite{bhattacharya2016bayesian}, and hence one cannot avoid the testing assumption for its usual variational approximation. Nevertheless, we show below that once the existence of such tests can be verified, the regular VB approximation can also be shown to contract optimally. 

\subsection{Risk bounds for the $\alpha = 1$ case} We consider any loss function $r(\theta,\,\theta^\ast)$ satisfying the following assumption.

\paragraph{Assumption T (Statistical identifiability):} For some $\varepsilon_n >0$ and any $\varepsilon\geq \varepsilon_n$, there exists a sieve set $\m F_{n,\varepsilon} \subset \Theta$ and a test function $\phi_{n,\varepsilon}:\, \m Y^n \to [0,1]$ such that
\begin{align}
&P_\theta(\m F_{n,\varepsilon}^c) \leq e^{-c\,n\,\varepsilon^2},\label{Eqn:complimentB}\\
&\mb E_{\theta^\ast}[\phi_{n,\varepsilon} ] \leq e^{-c\,n\,\varepsilon_n^2},\label{Eqn:T1}\\
& \mb E_{\theta}[1- \phi_{n,\varepsilon}]  \leq e^{-c\,n\,r(\theta,\,\theta^\ast)},\quad \forall \, \theta\in\m F_{n,\varepsilon} \mbox{ satisfies } r(\theta,\theta^\ast) \geq \varepsilon^2. \label{Eqn:T2}
\end{align}

Roughly speaking, the sieve set $\m F_{n,\varepsilon}$ can be viewed as the effective support of the prior distribution at sample size $n$, and $\varepsilon_n$ the contraction rate of the usual posterior distribution. The first condition~\eqref{Eqn:complimentB} allows us to focus attention to this important region in the parameter space that is not too large, but still possesses most of the prior mass.  The last two conditions~\eqref{Eqn:T1} and \eqref{Eqn:T2} ensure the statistical identifiability of the parameter under the loss $r(\cdot,\,\cdot)$ through the existence of a test function $\phi_{n,\varepsilon}$, and require a sufficiently fast decay of the Type I/II error. In the case when $\Theta$ is compact and $r(\theta,\,\theta')=h^2(\theta\,||\,\theta^\ast)$ is the squared Hellinger distance between $p_{\theta}$ and $p_{\theta^\ast}$, such a test $\phi_{n,\varepsilon}$ always exists \cite{ghosal2007convergence}. A similar set of assumptions are used for showing the concentration of the usual posterior (for example, see \cite{ghosal2000} and \cite{ghosal2007convergence}), with the existence of such sieve sets and test functions verified for numerous model-prior combinations. The only difference in our case is that Assumption T requires the existence of the pair $(\m F_{n,\varepsilon}, \,\phi_{n,\varepsilon})$ for all $\varepsilon\geq \varepsilon_n$, not just at $\varepsilon=\varepsilon_n$. However, this extra requirement appears mild since in most cases a construction of $(\m F_{n,\varepsilon}, \,\phi_{n,\varepsilon})$ at $\varepsilon=\varepsilon_n$ naturally extends to any $\varepsilon\geq \varepsilon_n$. 

Our main result for the usual VB ($\alpha = 1$) provides a finite-sample upper bound to the variational Bayes risk for any loss function $r(\theta,\,\theta^\ast)$ satisfying Assumption T. Here, we use $Q_\theta$ to denote the probability distribution associated with any member $q_\theta$ in the variational density family $\Gamma$.

\begin{theorem}\label{Thm:RegularPosterior}
Under Assumption T, for any $\varepsilon\geq \varepsilon_n$, we have that with $\bbP_{\theta^\ast}^{(n)}$ probability at least $1 - 2e^{-c\,n\,\varepsilon_n^2/2}$, it holds that for any probability measure $q_\theta \in\Gamma_{\theta}$ with $q_{\theta} \ll p_{\theta}$ and any probability measure $q_{S^n} \in \Gamma_{S^n}$ on $S^n$ that 
\begin{equation}\label{Eqn:RegVOI}
\begin{aligned}
&\frac{1}{n}\,\bigg[\widehat{Q}_\theta(\m F_{n,\varepsilon}^c)\,\log \frac{\widehat{Q}_\theta(\m F_{n,\varepsilon}^c)}{P_\theta(\m F_{n,\varepsilon}^c)} +(1-\widehat{Q}_\theta(\m F_{n,\varepsilon}^c))\,\log \frac{1-\widehat{Q}_\theta(\m F_{n,\varepsilon}^c)}{1-P_\theta(\m F_{n,\varepsilon}^c)}\bigg] \\
&\qquad\qquad+c\,\int_{\theta\in\m F_{n,\varepsilon},\, r(\theta,\,\theta^\ast) \geq \varepsilon^2} r(\theta,\,\theta^\ast)\, \qhat_\theta(\theta)\,d\theta \leq \frac{1}{n}\,\Psi_n(q_\theta,\,q_{S^n})+ \frac{c\,\varepsilon_n^2}{2} + \frac{\log 2}{n}.
\end{aligned}
\end{equation}
\end{theorem}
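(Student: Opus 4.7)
My plan is to combine the variational optimality of $(\qhat_\theta,\qhat_{S^n})$ with the Donsker--Varadhan dual representation of KL divergence, and then exploit Assumption T through a carefully chosen dual test function. Since $(\qhat_\theta,\qhat_{S^n})$ minimizes $\Psi_n$ over $\Gamma$ and the Jensen gap $\Delta_J$ is nonnegative, I would first record the deterministic variational inequality
\begin{equation*}
-\int \ell_n(\theta,\theta^\ast)\,\qhat_\theta(d\theta) + D(\qhat_\theta\,\|\,p_\theta) \;\le\; \Psi_n(\qhat_\theta,\qhat_{S^n}) \;\le\; \Psi_n(q_\theta,q_{S^n})
\end{equation*}
for every $(q_\theta,q_{S^n})\in\Gamma$. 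Combining this with the Donsker--Varadhan inequality $\int g\,d\qhat_\theta - D(\qhat_\theta\,\|\,p_\theta)\le \log\int e^{g}\,dp_\theta$ applied with $g=\ell_n+\widetilde g$ yields the master inequality
\begin{equation*}
\int \widetilde g(\theta)\,\qhat_\theta(d\theta) \;\le\; \Psi_n(q_\theta,q_{S^n}) + \log\int e^{\ell_n(\theta,\theta^\ast)+\widetilde g(\theta)}\,p_\theta(d\theta),
\end{equation*}
valid deterministically for every measurable $\widetilde g$.

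With $A_\varepsilon=\{\theta\in\m F_{n,\varepsilon}:r(\theta,\theta^\ast)\ge\varepsilon^2\}$, I would then specialise to
\begin{equation*}
\widetilde g(\theta) \;=\; \lambda\,I_{\m F_{n,\varepsilon}^c}(\theta) + \bigl[c\,n\,r(\theta,\theta^\ast)+\log(1-\phi_{n,\varepsilon})\bigr]\,I_{A_\varepsilon}(\theta),
\end{equation*}
with $\lambda\in\bbR$ to be optimised. The LHS of the master inequality becomes $\lambda\,\widehat{Q}_\theta(\m F_{n,\varepsilon}^c) + c\,n\int_{A_\varepsilon} r\,d\qhat_\theta + \log(1-\phi_{n,\varepsilon})\,\widehat{Q}_\theta(A_\varepsilon)$, already exhibiting the two quantities in \eqref{Eqn:RegVOI}. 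On the RHS, the log-integrand splits into three disjoint pieces supported on $\m F_{n,\varepsilon}^c$, $A_\varepsilon$, and $\m F_{n,\varepsilon}\setminus A_\varepsilon$. Using Fubini together with the change-of-measure identity $\bbE_{\theta^\ast}[(1-\phi_{n,\varepsilon})p(Y^n|\theta)/p(Y^n|\theta^\ast)]=\bbE_\theta[1-\phi_{n,\varepsilon}]\le e^{-c\,n\,r(\theta,\theta^\ast)}$ on $A_\varepsilon$ supplied by \eqref{Eqn:T2}, these pieces have $\bbP_{\theta^\ast}$-expectations bounded respectively by $e^\lambda P_\theta(\m F_{n,\varepsilon}^c)$, $P_\theta(A_\varepsilon)$, and $P_\theta(\m F_{n,\varepsilon}\setminus A_\varepsilon)$, summing to $e^\lambda P_\theta(\m F_{n,\varepsilon}^c) + 1 - P_\theta(\m F_{n,\varepsilon}^c)$.

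The closing move is to take $\sup_{\lambda\in\bbR}$ on the LHS, which demands that the bound on $\log\int e^{\ell_n+\widetilde g}\,dp_\theta$ hold uniformly in $\lambda$ on a single good event. Because the ratio of $\int e^{\ell_n+\widetilde g}\,dp_\theta$ to $e^\lambda P_\theta(\m F_{n,\varepsilon}^c)+1-P_\theta(\m F_{n,\varepsilon}^c)$ is monotone in $e^\lambda$, its supremum over $\lambda$ equals the maximum of two data-dependent quantities whose $\bbP_{\theta^\ast}$-expectations sum to at most $2$; a single Markov step at level $e^{-c\,n\,\varepsilon_n^2/2}$ delivers, on an event of probability at least $1-e^{-c\,n\,\varepsilon_n^2/2}$, the uniform-in-$\lambda$ bound $\log\int e^{\ell_n+\widetilde g}\,dp_\theta \le \log[e^\lambda P_\theta(\m F_{n,\varepsilon}^c)+1-P_\theta(\m F_{n,\varepsilon}^c)] + c\,n\,\varepsilon_n^2/2 + \log 2$. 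Intersecting with $\{\phi_{n,\varepsilon}\le e^{-c\,n\,\varepsilon_n^2/2}\}$, which fails with probability at most $e^{-c\,n\,\varepsilon_n^2/2}$ by Markov on \eqref{Eqn:T1}, renders the residual $\log(1-\phi_{n,\varepsilon})\widehat{Q}_\theta(A_\varepsilon)$ negligible. Taking $\sup_{\lambda\in\bbR}$ then turns $\lambda\widehat{Q}_\theta(\m F_{n,\varepsilon}^c) - \log[e^\lambda P_\theta(\m F_{n,\varepsilon}^c)+1-P_\theta(\m F_{n,\varepsilon}^c)]$ into exactly the Bernoulli KL appearing in \eqref{Eqn:RegVOI}, and dividing by $n$ gives the stated bound. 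The hardest part will be this uniformity in $\lambda$: a pointwise Markov at each $\lambda$ would produce $\lambda$-dependent bad sets that fail to intersect into a single event of the advertised probability, and the monotone-ratio Markov device is what rescues the argument.
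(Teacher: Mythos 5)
Your proposal is correct, and it reaches \eqref{Eqn:RegVOI} by a genuinely different route than the paper. The paper never optimizes over a dual parameter: it splits $\Theta$ into $\m F_{n,\varepsilon}$ and $\m F_{n,\varepsilon}^c$, applies the Gibbs/Donsker--Varadhan lemma \emph{twice} with the prior and the variational measure each \emph{restricted} to the two pieces, and then reassembles the two restricted bounds through the chain-rule identity
$D(\widehat{Q}_\theta\,\|\,P_\theta)=\widehat{Q}_\theta(\m F_{n,\varepsilon})D(\widehat{Q}_{\m F_{n,\varepsilon}}\|P_{\theta,\m F_{n,\varepsilon}})+\widehat{Q}_\theta(\m F_{n,\varepsilon}^c)D(\widehat{Q}_{\m F^c_{n,\varepsilon}}\|P_{\theta,\m F^c_{n,\varepsilon}})+D\bigl(\mathrm{Ber}(\widehat{Q}_\theta(\m F_{n,\varepsilon}^c))\,\|\,\mathrm{Ber}(P_\theta(\m F_{n,\varepsilon}^c))\bigr)$,
so the Bernoulli KL term falls out of the decomposition for free. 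You instead apply the dual inequality once, globally, with the tilt $\lambda I_{\m F_{n,\varepsilon}^c}$, and recover the Bernoulli KL as the Legendre transform $\sup_\lambda\{\lambda q-\log(e^\lambda p+1-p)\}$; the price is precisely the uniformity-in-$\lambda$ issue you identify, and your monotone-ratio resolution is sound: since $\lambda\mapsto(e^\lambda U+V)/(e^\lambda a+b)$ has supremum $\max(U/a,V/b)$ with $\bbE_{\theta^\ast}[U/a]+\bbE_{\theta^\ast}[V/b]\le 2$, one Markov step at level $2e^{c n\varepsilon_n^2/2}$ gives the uniform bound on an event of probability $1-e^{-cn\varepsilon_n^2/2}$, which is exactly what the paper's two restricted Markov steps buy. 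Your treatment of the latent variables is also a mild simplification: you discard $\Delta_J\ge 0$ once and work with the marginal $\ell_n(\theta,\theta^\ast)$, whereas the paper carries the joint $(\theta,s^n)$ representation through the lemma. Two cosmetic caveats: (i) placing $\log(1-\phi_{n,\varepsilon})$ inside the exponent on $A_\varepsilon$ leaves the residual $-\log(1-\phi_{n,\varepsilon})\,\widehat{Q}_\theta(A_\varepsilon)\le -\log(1-e^{-cn\varepsilon_n^2/2})$ on the type-I event, so your final additive constant is slightly worse than $\log 2$ (the paper avoids this by splitting $A_n=(1-\phi_{n,\varepsilon})A_n+\phi_{n,\varepsilon}A_n$ multiplicatively and solving for $A_n$); and (ii) the degenerate case $P_\theta(\m F_{n,\varepsilon}^c)=0$ should be dispatched separately, trivially, since $\qhat_\theta\ll p_\theta$ forces $\widehat{Q}_\theta(\m F_{n,\varepsilon}^c)=0$ there. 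Neither affects the substance or the rate.
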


The first term on the l.h.s.~of inequality~\eqref{Eqn:RegVOI} relates the variational complementary probability $\widehat{Q}_\theta(\m F_{n,\varepsilon}^c)$ to the prior complementary probability $P_\theta(\m F_{n,\varepsilon}^c)$. As a consequence, an upper bound of this term controls the remainder variational probability mass outside the sieve $\m F_{n,\varepsilon}$. The second term $\int_{\theta\in\m F_{n,\varepsilon},\, r(\theta,\,\theta^\ast) \geq \varepsilon^2}$ $r(\theta,\,\theta^\ast)\, \qhat_\theta(\theta)\,d\theta$ in~\eqref{Eqn:RegVOI} is the variational Bayes risk over to the intersection between $\m F_{n,\varepsilon}$ and the set of all $\theta$ such that the loss $r(\theta,\,\theta^\ast)$ is at least $\varepsilon^2$. 

In \cite{pati2017statistical}, we proved a risk bound for the $\alpha = 1$ case under the much stronger assumption of a compact parameter space and the existence of a global test $\phi_n$ with type-I and II error rates bounded above by $e^{- C n \varepsilon_n^2}$. Under those assumptions, the result in \cite{pati2017statistical} can be recovered from our more general result in Theorem \ref{Thm:RegularPosterior} by setting $\m F_{n, \varepsilon} = \Theta$, and $\phi_{n, \varepsilon} = \phi_n$; the global test, for all $\epsilon$. Such stronger assumptions usually hold when the parameter space $\Theta$ is a compact subset of the Euclidean space --- however, in other cases such as unbounded parameter spaces or infinite dimensional functional spaces, such a global test function $\phi_n$ may not exist, signifying the necessity of Theorem \ref{Thm:RegularPosterior}. We also point out the preprint \cite{zhang2017convergence}, which appeared while this manuscript was in revision, where they consider the usual VB and their analysis is based on a direct application of the variational lemma in the proof of Theorem~\ref{thm:main} in the supplementary document. However, their results require a stronger prior concentration condition and their analysis does not involve latent variable models.

Similar to the development for $\alpha<1$, we can further simplify $\Psi_n$ by introducing more assumptions. Due to the space constraint, we only provide a counterpart of Theorem~\ref{thm:OI_mixture} under the assumptions made therein. Recall the definition of two KL neighbourhoods $\m B_n(\pi^\ast,\,\varepsilon)$ and $\m B_n(\mu^\ast,\,\varepsilon)$ defined in~\eqref{Eqn:KLN}.

\paragraph{Assumption P (Prior concentration):}
There exists some constant $C>0$ such that
\begin{align*}
P_\theta\big(\m B_n(\pi^\ast,\,\varepsilon_n)\big) \geq \exp\big(-C\,n\,\varepsilon_n^2)\quad\mbox{and}\quad P_\theta\big(\m B_n(\mu^\ast,\,\varepsilon_n)\big) \geq \exp\big(-C\,n\,\varepsilon_n^2).
\end{align*}

Under Assumptions T and P, Theorem~\ref{Thm:RegularPosterior} leads to a high probability bound on the over variational Bayes risk for loss $r(\theta,\,\theta^\ast)$, as summarized in the following Theorem.

\begin{theorem}[Parameter restriction-free]\label{Thm::RegularPosterior}
Under Assumptions T and P, it holds with $\bbP_{\theta^\ast}^{(n)}$ probability at least $1 - 3/\{(D-1)^2 \,n \, \varepsilon_n^2\}$ that for any $\varepsilon\in [\varepsilon_n,\,e^{c'\,n\varepsilon_n^2}]$ (for some constant $c'>0$), 
\begin{align*}
&\widehat{Q}_\theta\Big(r(\theta,\,\theta^\ast) \geq \varepsilon^2\Big) \leq  C_1\,\frac{\varepsilon_n^2}{\varepsilon^2}.
\end{align*}
In particular, this implies for any $R<e^{2c'\,n\varepsilon_n^2}$,
\begin{align*}
 \int_{\theta:\, r(\theta,\,\theta^\ast) \leq R} r(\theta,\,\theta^\ast)\, \qhat_\theta(\theta)\,d\theta \leq C_3\, \varepsilon_n^2\, \big(1+\log (R/\varepsilon_n)\big).
\end{align*}
\end{theorem}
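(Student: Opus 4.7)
The plan is to combine Theorem \ref{Thm:RegularPosterior} with the prior-mass condition in Assumption P, in the same spirit as how Corollary \ref{coro:main} was converted into Theorem \ref{thm:OI_mixture} in the $\alpha<1$ case. Specifically, I would instantiate the right-hand side of \eqref{Eqn:RegVOI} at the pair $(q_\theta,\widetilde{q}_{S^n})$ where $\widetilde{q}_{S^n}$ is the product distribution in \eqref{Eqn:q_S} and $q_\theta$ is the prior $p_\theta$ restricted to $\m B_n(\pi^\ast,\varepsilon_n)\times \m B_n(\mu^\ast,\varepsilon_n)$. Independence of the priors on $\mu$ and $\pi$ then gives $D(q_\theta\,||\,p_\theta)=-\log P_\theta\big(\m B_n(\pi^\ast,\varepsilon_n)\times \m B_n(\mu^\ast,\varepsilon_n)\big)\le 2C n\varepsilon_n^2$ by Assumption P, while the remaining two constituents of $\Psi_n(q_\theta,\widetilde{q}_{S^n})$, namely $-\int \ell_n(\theta,\theta^\ast)\,q_\theta(d\theta)$ and $\Delta_J(q_\theta,\widetilde{q}_{S^n})$, are controlled by the $V$-divergence part of the KL neighborhood via a Chebyshev-type bound on $\sum_i \log\{p(Y_i\,|\,\theta^\ast)/p(Y_i\,|\,\theta)\}$ and $\sum_i\sum_{s_i}\widetilde{q}_{S_i}(s_i)\log\{p(Y_i\,|\,\mu^\ast,s_i)\pi^\ast_{s_i}/[p(Y_i\,|\,\mu,s_i)\pi_{s_i}]\}$ — exactly the high-probability estimates already carried out in the proof of Theorem \ref{thm:OI_mixture}. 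This yields $\Psi_n(q_\theta,\widetilde{q}_{S^n})\le C'\,n\varepsilon_n^2$ on an event of $\mb P^{(n)}_{\theta^\ast}$-probability at least $1-3/\{(D-1)^2 n\varepsilon_n^2\}$.

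Next, on that event, plugging into \eqref{Eqn:RegVOI} for any $\varepsilon\in[\varepsilon_n,e^{c'n\varepsilon_n^2}]$ produces
\[
\frac{1}{n}\,d_B\!\left(\widehat{Q}_\theta(\m F_{n,\varepsilon}^c)\,\big\|\,P_\theta(\m F_{n,\varepsilon}^c)\right) + c\int_{\m F_{n,\varepsilon},\,r(\theta,\theta^\ast)\ge\varepsilon^2}r(\theta,\theta^\ast)\,\widehat{q}_\theta(\theta)\,d\theta\le C''\varepsilon_n^2,
\]
where $d_B(p\,\|\,q)$ denotes the Bernoulli KL divergence. I would split $\widehat{Q}_\theta\big(r(\theta,\theta^\ast)\ge\varepsilon^2\big)=\widehat{Q}_\theta\big(r(\theta,\theta^\ast)\ge\varepsilon^2,\,\m F_{n,\varepsilon}\big)+\widehat{Q}_\theta(\m F_{n,\varepsilon}^c)$. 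Markov's inequality applied to the integral above bounds the first piece by $C''\varepsilon_n^2/(c\varepsilon^2)$. For the second piece, combine $P_\theta(\m F_{n,\varepsilon}^c)\le e^{-cn\varepsilon^2}$ from \eqref{Eqn:complimentB} with the Bernoulli-KL inequality $p\log(p/q)\ge -1/e + p\log(1/q)$ and $(1-p)\log\{(1-p)/(1-q)\}\ge -q/(1-q)$ to deduce $\widehat{Q}_\theta(\m F_{n,\varepsilon}^c)\cdot c\varepsilon^2 \le C''\varepsilon_n^2 + O(1/n)$, which gives $\widehat{Q}_\theta(\m F_{n,\varepsilon}^c)\lesssim \varepsilon_n^2/\varepsilon^2$. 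Summing the two pieces yields $\widehat{Q}_\theta\big(r(\theta,\theta^\ast)\ge\varepsilon^2\big)\le C_1\varepsilon_n^2/\varepsilon^2$, the first conclusion.

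The second (truncated-mean) conclusion then follows from the layer-cake identity
\[
\int_{r(\theta,\theta^\ast)\le R}r(\theta,\theta^\ast)\,\widehat{q}_\theta(\theta)\,d\theta = \int_0^R \widehat{Q}_\theta\big(r(\theta,\theta^\ast)>t\big)\,dt,
\]
by splitting the outer integral at $t=\varepsilon_n^2$, bounding the $[0,\varepsilon_n^2]$ piece by $\varepsilon_n^2$ trivially, and using the tail bound $C_1\varepsilon_n^2/t$ on $[\varepsilon_n^2,R]$, which integrates to $C_1\varepsilon_n^2\log(R/\varepsilon_n^2)\le 2C_1\varepsilon_n^2(1+\log(R/\varepsilon_n))$. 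The admissibility of $R<e^{2c'n\varepsilon_n^2}$ ensures the applicability of the tail bound over the whole range $[\varepsilon_n^2,R]$.

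The main obstacle is really step one: controlling $\Psi_n(q_\theta,\widetilde{q}_{S^n})$ with high probability when $q_\theta$ is the restricted prior. This requires the same careful Chebyshev argument using the $V$-divergence component of $\m B_n(\pi^\ast,\varepsilon_n)$ and $\m B_n(\mu^\ast,\varepsilon_n)$ that appears in the proof of Theorem \ref{thm:OI_mixture}; everything downstream is a fairly mechanical Markov/layer-cake calculation combined with a careful handling of the Bernoulli-KL term to convert $\Psi_n \lesssim n\varepsilon_n^2$ into both a polynomial tail bound and the truncated first-moment bound.
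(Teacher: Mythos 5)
Your proposal follows the paper's proof in all essentials. Step one (taking $q_\theta$ to be the prior restricted to $\m B_n(\pi^\ast,\varepsilon_n)\times\m B_n(\mu^\ast,\varepsilon_n)$, using Assumption P for the KL term, and repeating the Chebyshev/$V$-divergence argument from the proof of Theorem \ref{thm:OI_mixture} to get $\Psi_n(q_\theta,\widetilde q_{S^n})\lesssim n\varepsilon_n^2$ on a high-probability event) is exactly how the paper proceeds. So is the split of $\widehat Q_\theta\big(r(\theta,\theta^\ast)\geq\varepsilon^2\big)$ into an on-sieve piece handled by Markov's inequality and an off-sieve piece handled by lower-bounding the Bernoulli KL term by $\widehat Q_\theta(\m F_{n,\varepsilon}^c)\log\{1/P_\theta(\m F_{n,\varepsilon}^c)\}$ minus an absolute constant (the paper uses $x\log x+(1-x)\log(1-x)\geq-\log 2$ where you use two slightly different elementary bounds; these are interchangeable), and the layer-cake computation for the truncated mean.

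There is one gap: uniformity in $\varepsilon$. Theorem \ref{Thm:RegularPosterior} produces, for each \emph{fixed} $\varepsilon\geq\varepsilon_n$, an event of probability at least $1-2e^{-c\,n\,\varepsilon_n^2/2}$ on which \eqref{Eqn:RegVOI} holds, whereas you write as though a single event suffices for all $\varepsilon$ in the range at once. The theorem's conclusion is a simultaneous statement over $\varepsilon\in[\varepsilon_n,\,e^{c'n\varepsilon_n^2}]$, and the layer-cake integral in the second conclusion genuinely requires the tail bound at a continuum of levels $t$ on one event of the claimed probability. The paper closes this by taking a union bound over the grid $\varepsilon=k\,\varepsilon_n$, $k=1,\ldots,e^{c\,n\varepsilon_n^2/4}$ --- the grid cardinality is dominated by the per-$\varepsilon$ failure probability $e^{-c\,n\varepsilon_n^2/2}$, which is precisely what fixes the admissible constant $c'$ and keeps the total failure probability within $3/\{(D-1)^2\,n\,\varepsilon_n^2\}$ --- and then extends from grid points to arbitrary $\varepsilon$ in the range by monotonicity of $\varepsilon\mapsto\widehat Q_\theta\big(r(\theta,\theta^\ast)\geq\varepsilon^2\big)$. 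This discretization-plus-monotonicity step is the one missing ingredient; everything else in your argument is sound.
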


In particular, if the sequence $\{\varepsilon_n:\,n\geq 1\}$ satisfies $n\,\varepsilon_n^2\to\infty$, then selecting $\varepsilon = M_n \,\varepsilon_n$ for $M_n\to \infty$ ($M_n\leq\varepsilon_n^{-1}$) leads to the asymptotic variational posterior concentration:
\begin{align*}
\widehat{Q}_\theta\Big(r(\theta,\,\theta^\ast) \leq M_n^2\, \varepsilon^2\Big) \to 1\quad\mbox{in probability,\ \  as $n\to\infty$}.
\end{align*}
The extra truncation $r(\theta,\,\theta^\ast) \leq R$ in the variational risk bound in the theorem is due to the quadratic decay of our upper bound to $\widehat{Q}_\theta\big(r(\theta,\,\theta^\ast) \geq \varepsilon^2\big)$. Since the risk upper bound only has a logarithmic dependence on the truncation level $R$, one can simply set it at a fixed large number to ensure an order $\m O(\varepsilon_n^2)$ risk bound in practice. In fact, this truncation can be eliminated under a stronger assumption (as in \cite{pati2017statistical}) that there is a global test function $\phi_{n}:\,\m Y^n\to[0,1]$, such that the type I error bound~\eqref{Eqn:T1} holds, and the following type II error bound holds for all $\theta\in \Theta$ satisfying $r(\theta,\,\theta^\ast) \geq \varepsilon_n^2$, 
\begin{align*}
\mb E_{\theta}[1- \phi_{n}]  \leq e^{-c\,n\,r(\theta,\,\theta^\ast)}.
\end{align*}
This can be seen from Theorem~\ref{Thm:RegularPosterior} by setting $\m F_{n}=\Theta$ and $\varepsilon=\varepsilon_n$ in inequality~\eqref{Eqn:RegVOI}, which implies 
\begin{align*}
c\,\int_{\Theta} r(\theta,\,\theta^\ast)\, \qhat_\theta(\theta)\,d\theta &\leq c\,\varepsilon_n^2 + c\,\int_{r(\theta,\,\theta^\ast) \geq \varepsilon_n^2} r(\theta,\,\theta^\ast)\, \qhat_\theta(\theta)\,d\theta \\
&\qquad\qquad\qquad\qquad\leq \frac{1}{n}\,\Psi_n(q_\theta,\,q_{S^n})+ \frac{3c\,\varepsilon_n^2}{2} + \frac{\log 2}{n}.
\end{align*}
%This stronger assumption usually holds when the parameter space $\Theta$ is compact set in the Euclidean space --- however, in other cases such as unbounded parameter spaces or infinite dimensional functional spaces, such a global test function $\phi_n$ may not exist.

\subsection{$\alpha$-VB using stronger divergences}\label{Sec::Stronger}
In this subsection, we consider an extension of our theoretical development for $\alpha$-VB where the KL divergence in the objective function is replaced by a stronger divergence $\bar{D}[p\,||\,q] \geq D[p\,||\,q]$, for example, $\chi^2$ divergence and R\'{e}nyi divergence \cite{Yingzhen2016}, and the corresponding variational approximation 
\begin{align*}
\bar{q}_{W^n} :\, = \argmin_{q_{W^n} \in \Gamma} \bar{D}\big[\,q_{W^n}(\cdot)\,\big|\big|\, p(\cdot\,|\, Y^n)\,\big].
\end{align*}
As another example, in some applications of variational inference \cite{zobay2014variational}, the minimization of the KL divergence over the variational density $q_{W^n}$ to the conditional density $p(W^n\,|\,Y^n)$ may not admit a closed-form updating formula, and some surrogate ELBO $\bar{L}(q_{W^n})$ as a lower bound to the ELBO $L(q_{W^n})$ is employed. Under the perspective of ELBO decomposition~\eqref{eq:elbo_dec}, this replacement is equivalent to using a stronger metric
\begin{align*}
\bar{D}\big[q_{W^n}\,\big|\big|\,p(\cdot\,|\,Y^n)\big] :\,= \log p(Y^n) - \bar{L}(q_{W^n}) \geq D\big[q_{W^n}\,\big|\big|\,p(\cdot\,|\,Y^n)\big].
\end{align*}

The following theorem provides a variational Bayes risk upper bound to $\bar{q}_{\theta}$. To simplify the presentation, the theorem is stated for the $\alpha<1$ case, although extension to $\alpha=1$ is straightforward. Define the equivalent objective function
\begin{align*}
\bar{\Psi}_\alpha(q_\theta,\,q_{S^n}) = \Psi_{n,\alpha}(q_\theta,\,q_{S^n}) + \Big(\bar{D}\big[q_{W^n}\,\big|\big|\,p(\cdot\,|\,Y^n)\big] - D\big[q_{W^n}\,\big|\big|\,p(\cdot\,|\,Y^n)\big]\Big),
\end{align*}
and the corresponding $\alpha$-VB solution $\bar{q}_{\theta,\alpha} = \argmin_{q_{W^n} \in \Gamma} \bar{\Psi}_\alpha(q_\theta,\,q_{S^n})$.
When $\bar{D}$ is the KL divergence $D$, objective function $\bar{\Psi}_\alpha$ reduces to the $\Psi_{n,\alpha}$ in \eqref{eq:VBapp1}.

\begin{theorem}\label{Thm::StrongerMetric}
For any $\zeta \in (0, 1)$, it holds with $\mb P_{\theta^\ast}^n$ probability at least $(1-\zeta)$ that for any probability measure $q_\theta \in\Gamma_{\theta}$ with $q_{\theta} \ll p_{\theta}$ and any probability measure $q_{S^n} \in \Gamma_{S^n}$ on $S^n$, 
\begin{align*}
\int \frac{1}{n}\,D^{(n)}_{\alpha}(\theta,\,\theta^\ast) \ \bar{q}_{\theta,\alpha}(d\theta)
\le \frac{\alpha}{n(1-\alpha)} \bar{\Psi}_{\alpha}(q_{\theta},\, q_{S^n})  + \frac{1}{n(1-\alpha)}  \log(1/\zeta).
\end{align*}
\end{theorem}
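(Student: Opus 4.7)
The strategy is to leverage the master variational inequality that underlies Theorem~\ref{thm:main}. A close reading of the proof of Theorem~\ref{thm:main} (carried out in the supplementary document) shows that the intermediate step actually establishes the following uniform bound: with $\mb P_{\theta^\ast}^n$ probability at least $1-\zeta$,
\begin{align*}
\int \frac{1}{n}\,D^{(n)}_\alpha(\theta,\theta^\ast)\, q_\theta(d\theta) \;\le\; \frac{\alpha}{n(1-\alpha)}\,\Psi_{n,\alpha}(q_\theta,q_{S^n}) + \frac{1}{n(1-\alpha)}\log(1/\zeta)
\end{align*}
holds \emph{simultaneously} for every admissible pair $(q_\theta,q_{S^n})\in\Gamma$ with $q_\theta\ll p_\theta$. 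Theorem~\ref{thm:main} itself follows from this by specializing the left-hand side to $\qhat_{\theta,\alpha}$ and invoking optimality of $(\qhat_{\theta,\alpha},\qhat_{S^n,\alpha})$ for $\Psi_{n,\alpha}$ on the right-hand side. For the present theorem, I will instead plug the $\bar{\Psi}_\alpha$-minimizer $\bar{q}_{\theta,\alpha}$ into the left-hand side.

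Concretely, let $(\bar{q}_{\theta,\alpha},\bar{q}_{S^n,\alpha})$ denote the joint minimizer of $\bar{\Psi}_\alpha$ over $\Gamma$. On the high-probability event above, specializing the master inequality to $(q_\theta,q_{S^n})=(\bar{q}_{\theta,\alpha},\bar{q}_{S^n,\alpha})$ yields
\begin{align*}
\int \frac{1}{n}\,D^{(n)}_\alpha(\theta,\theta^\ast)\,\bar{q}_{\theta,\alpha}(d\theta) \;\le\; \frac{\alpha}{n(1-\alpha)}\,\Psi_{n,\alpha}(\bar{q}_{\theta,\alpha},\bar{q}_{S^n,\alpha}) + \frac{1}{n(1-\alpha)}\log(1/\zeta).
\end{align*}
Because $\bar{D}\ge D$ by hypothesis, the defining identity for $\bar{\Psi}_\alpha$ implies the pointwise inequality $\Psi_{n,\alpha}\le \bar{\Psi}_\alpha$, while optimality of $(\bar{q}_{\theta,\alpha},\bar{q}_{S^n,\alpha})$ for $\bar{\Psi}_\alpha$ gives $\bar{\Psi}_\alpha(\bar{q}_{\theta,\alpha},\bar{q}_{S^n,\alpha})\le \bar{\Psi}_\alpha(q_\theta,q_{S^n})$ for every $(q_\theta,q_{S^n})\in\Gamma$. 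Chaining these two comparisons into the displayed bound produces exactly the stated conclusion.

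\textbf{Where the work sits.} The present theorem has no genuinely new analytic content on top of Theorem~\ref{thm:main}; the real work lies in the extraction of the uniform-over-$(q_\theta,q_{S^n})$ master inequality. I would obtain it through the standard PAC-Bayes recipe of combining the R\'enyi identity $\mb E_{\theta^\ast}[\exp\{\alpha\,\ell_n(\theta,\theta^\ast)\}]=\exp\{-(1-\alpha)\,D^{(n)}_\alpha(\theta,\theta^\ast)\}$ with a Fubini-plus-Markov control of $\int \exp\{\alpha\,\ell_n(\theta,\theta^\ast)+(1-\alpha)\,D^{(n)}_\alpha(\theta,\theta^\ast)\}\,p_\theta(d\theta)$, the Donsker--Varadhan dual representation of $D(q_\theta\,||\,p_\theta)$, and the elementary bound $\Delta_J\ge 0$ to absorb $q_{S^n}$ into the right-hand side. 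Given that machinery, the stronger-divergence variant is essentially a one-line adaptation exploiting $\bar{D}\ge D$, and so the only step requiring any care is appealing to the intermediate inequality from the proof of Theorem~\ref{thm:main} rather than to its polished statement, since the latter has already absorbed the optimality of $\qhat_{\theta,\alpha}$ for $\Psi_{n,\alpha}$ and is therefore too weak to apply directly to the new minimizer $\bar{q}_{\theta,\alpha}$.
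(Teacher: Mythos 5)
Your proposal is correct and follows essentially the same route as the paper's own proof: both extract the uniform-over-$(q_\theta,q_{S^n})$ intermediate inequality from the proof of Theorem~\ref{thm:main}, specialize the left-hand side to the $\bar{\Psi}_\alpha$-minimizer, and then chain $\Psi_{n,\alpha}\le\bar{\Psi}_\alpha$ with the optimality of $(\bar{q}_{\theta,\alpha},\bar{q}_{S^n,\alpha})$. The only cosmetic difference is the order in which the two comparisons are applied, and your observation that the polished statement of Theorem~\ref{thm:main} is too weak (having already absorbed the optimality of $\qhat_{\theta,\alpha}$) is exactly the point the paper's proof relies on.
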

This theorem provides a simple replacement rule for $\alpha$-VB---if the $\alpha$-VB objective function $\Psi_{n,\alpha}$ is replaced with a upper bound $\bar{\Psi}_\alpha$, then a variational Bayes risk bound obtained by replacing $\Psi_{n,\alpha}$ with the upper bound $\bar{\Psi}_\alpha$ holds. We will apply this replacement rule to obtain a minimax variational risk bound for the mixture of Gaussian approximation in Section 4.

%%%%%%%%%%%%%%%%%%%%%%%%%%%%%%%%%%%%%
%%%%%%%%%%%%%%%%%%%%%%%%%%%%%%%%%%%%%

\section{Applications}

In this section, we apply our theory in Section 3 to concrete examples: mean field approximation to (low) high-dimensional Bayesian linear regression, (mixture of) Gaussian approximation to regular parametric models, mean field approximation to Gaussian mixture models, and mean field approximation to latent Dirichlet allocation. To simplify the presentation, all results are stated for $\alpha$-VB with $\alpha < 1$ and the $\alpha$ subscript in $\qhat_{\theta, \alpha}$ is dropped. Extensions to the $\alpha=1$ case are discussed in the supplement.

\paragraph{{\bf Example:} (Mean field approximation to low-dimensional Bayesian linear regression)}

Consider the following Bayesian linear model
\begin{align}\label{Eqn:BLM}
Y^n = X\beta + w,\quad w\sim\m N(0, \sigma^2I_n),
\end{align}
where $Y^n\in\mb R^n$ is the $n$-dim response vector, $X\in\mb R^{n\times d}$ the design matrix, $\beta\in\mb R^d$ the unknown regression coefficient vector of interest, and $\sigma$ the noise level. In this example, we consider the low-dimensional regime where $d\ll n$, and focus on independent prior $p_\beta\otimes p_\sigma$ for parameter pair $\theta=(\beta,\,\sigma)$ for technical convenience (the result also applies to non-independent priors).

We apply the mean-field approximation by using the following variational family
\begin{align*}
q(\beta,\, \sigma) =q_\beta(\beta)\, q_\sigma(\sigma)
\end{align*}
to approximate the joint $\alpha$-fractional posterior distribution of $\theta=(\beta,\,\sigma)$ with $\qhat_\theta=\qhat_\beta\otimes \qhat_\sigma$. This falls into our framework when there is no latent variable and $W^n=\theta$. 
%The objective function in \eqref{eq:alpha_VB_sol} assumes the form 
%\begin{align}\label{Eqn:MF_BLM}
%(\qhat_\beta,\,\qhat_\sigma):\,=&\,\argmin_{q_\beta,\,q_\sigma} 
%\bigg\{-\alpha\,\int \int q_\beta(\beta)\,q_\sigma(\sigma) \,\log p(Y^n\,|\,X,\,\beta,\,\sigma)\,d\beta\,d\sigma + D(q_\beta\,| |\, p_\beta) + D(q_\sigma\,| |\, p_\sigma)\bigg\},
%\end{align}
%where $p(Y^n\,|\,X,\,\beta,\,\sigma)$ is the pdf of $\m N(X\beta,\, \sigma^2 I_n)$. 
Computational-wise, a normal prior for $\theta$ and an inverse gamma prior for $\sigma^2$ are attractive since they are ``conjugate" priors --- the resulting variational densities $\qhat_\beta$ and $\qhat_\sigma$ still fall into the same parametric families. An application of Theorem~\ref{thm:NoLatent} leads to the following result.

\begin{corollary}\label{coro:BLM}
Assume that the prior density is continuous, and thick around the truth $\theta^\ast=(\beta^\ast,\,\sigma^\ast)$, that is, $p_\theta(\theta^\ast)>0$ and $p_\sigma(\sigma^\ast)>0$. If $d/n\to 0$ as $n\to\infty$, then with probability tending to one as $n\to\infty$,
\begin{align*}
& \bigg\{\int h^2\big[p(\cdot\,|\,\theta)\,\big|\big|\, p(\cdot\,|\, \theta^\ast)\big] \ \qhat_\theta(\theta)\,d\theta\bigg\}^{1/2} \lesssim \sqrt{\frac{d}{n\,\min\{\alpha,\,1-\alpha\}}\,\log (d\,n)}.
\end{align*}
\end{corollary}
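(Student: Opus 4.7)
The strategy is to apply Theorem~\ref{thm:NoLatent} (the no-latent-variable instance of our main risk bound) with a carefully chosen product trial density $q_\theta = q_\beta \otimes q_\sigma$, and then convert the resulting $\alpha$-divergence risk bound into a squared Hellinger bound via the uniform inequality $h^2(p,q) \le D_\alpha(p\,||\,q)/\min(\alpha,1-\alpha)$. This divergence inequality is immediate for $\alpha \ge 1/2$ from monotonicity $D_\alpha \ge D_{1/2} \ge h^2$, and for $\alpha < 1/2$ follows from the reflection identity $(1-\alpha)D_\alpha(p\,||\,q) = \alpha D_{1-\alpha}(q\,||\,p)$ combined with symmetry of $h^2$ in its two arguments.

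For the trial density I would take $q_\beta = \m N(\beta^*, \tau_n^2 I_d)$ and $q_\sigma$ a narrow symmetric distribution (Gaussian or truncated prior) centred at $\sigma^*$ at scale $\delta_n$. By continuity and positivity of the factorized prior densities at the truth, for small $\tau_n, \delta_n$ elementary Gaussian entropy bookkeeping gives $D(q_\beta\,||\,p_\beta) \le (d/2)\log(1/\tau_n^2) + O(d)$ and $D(q_\sigma\,||\,p_\sigma) \le \log(1/\delta_n) + O(1)$, where the absorbed constants include the Gaussian entropy normalization and $-\log p_\beta(\beta^*), -\log p_\sigma(\sigma^*)$, both finite by assumption.

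The main obstacle is high-probability control of the random quantity $-\int \log[p(Y^n|\theta)/p(Y^n|\theta^*)]\, q_\theta(d\theta)$. Writing $Y^n = X\beta^* + w$ with $w \sim \m N(0, \sigma^{*2}I_n)$, the log-ratio is an affine-plus-quadratic function of $(w, \beta-\beta^*)$; integration against the $\beta^*$-symmetric factor $q_\beta$ kills the cross term $w^\T X(\beta-\beta^*)/\sigma^2$, leaving a deterministic piece of order $\tau_n^2\,\tr(X^\T X)/\sigma^{*2} + n\delta_n^2/\sigma^{*2}$ in expectation. The only remaining randomness is through $\|w\|^2/\sigma^{*2}\sim \chi^2_n$, which has sub-exponential concentration, so Chebyshev's inequality (or equivalently a bound on the $V$-divergence of $p_\theta^{(n)}$ vs. $p_{\theta^*}^{(n)}$) upgrades the expectation bound to a high-probability bound of the same order.

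Finally, calibrate the scales by setting $\tau_n^2 \asymp d\log(dn)/\tr(X^\T X)$ and $\delta_n^2 \asymp d\log(dn)/n$ (the former uses the standard regression normalization $\tr(X^\T X) \asymp n$ implicit in the setup), so that all three constituents of $\Psi_{n,\alpha}(q_\theta)$ are of order $d\log(dn)$. Substituting into Theorem~\ref{thm:NoLatent} with $\zeta = 1/n$ yields $\int D_\alpha\, \qhat_\theta(d\theta) \lesssim d\log(dn)/[n(1-\alpha)]$, which after the Hellinger conversion becomes $\int h^2\, \qhat_\theta(d\theta) \lesssim d\log(dn)/[n\min(\alpha,1-\alpha)]$; taking square roots delivers the stated rate. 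The assumption $d/n\to 0$ is invoked to ensure both that $\tau_n$ is small enough for the continuity expansion of $\log p_\beta$ around $\beta^*$ to be sharp, and that the final rate is $o(1)$.
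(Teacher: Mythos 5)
Your overall strategy is the same as the paper's: plug a trial density concentrated at $\theta^\ast$ into Theorem~\ref{thm:NoLatent}, bound the integrated log-likelihood ratio and the KL-to-prior term separately at the common order $d\log(dn)$, and convert the resulting R\'enyi risk bound into a Hellinger bound. Your treatment of the likelihood term is in fact cleaner than the paper's for this example: exploiting the product form of $q_\theta$ and the symmetry of $q_\beta$ about $\beta^\ast$ to annihilate the cross term $w^{\T}X(\beta-\beta^\ast)/\sigma^2$ exactly, leaving only $\chi^2_n$ concentration, is a nice shortcut compared with the paper's generic first/second-moment-plus-Chebyshev argument over KL neighborhoods.

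There is, however, one step that genuinely fails as written. You take $q_\beta=\m N(\beta^\ast,\tau_n^2 I_d)$ and claim $D(q_\beta\,||\,p_\beta)\le (d/2)\log(1/\tau_n^2)+O(d)$ with the constant absorbing $-\log p_\beta(\beta^\ast)$. The corollary only assumes the prior density is continuous and positive \emph{at the truth}; it may have bounded support or very light tails. For a uniform prior on a compact set containing $\beta^\ast$, your Gaussian trial density has $-\int q_\beta\log p_\beta=+\infty$, so $D(q_\beta\,||\,p_\beta)=\infty$ and the bound collapses, even though the hypotheses of the corollary are satisfied. The paper sidesteps this by taking $q_\theta^\ast\propto I_{\m N_n(\theta^\ast,\varepsilon)}$, the uniform density on a small \emph{product} neighborhood of $(\beta^\ast,\sigma^\ast)$, for which $D(q_\theta^\ast\,||\,p_\theta)=-\log[\mathrm{vol}(\m N_n)]-\int q_\theta^\ast\log p_\theta\lesssim d\log(d/\varepsilon)+O(1)$ using only local continuity and thickness. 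Your argument is repaired by truncating the Gaussian to a shrinking ball around $\beta^\ast$ (which preserves the symmetry needed to kill the cross term) or by adopting the uniform trial density outright. A second, minor point: you first prove $\int D_\alpha\,\qhat_\theta\lesssim d\log(dn)/[n(1-\alpha)]$ and then divide by $\min\{\alpha,1-\alpha\}$, which for $\alpha\ge 1/2$ yields an extra factor $(1-\alpha)^{-1}$ beyond the stated bound; for $\alpha\ge 1/2$ you should instead use $h^2\le D_{1/2}\le D_\alpha$ directly, since the $(1-\alpha)^{-1}=\min\{\alpha,1-\alpha\}^{-1}$ is already present in the $D_\alpha$ bound.
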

The convergence rate of $\m O(\sqrt{n^{-1}\,d\,\log (dn)})$ under the Hellinger distance implies that the $\alpha$-VB estimator $\widehat{\beta}_{\VB,\alpha}=\int \beta\,  \qhat_{\beta}(\beta)\,d\beta$ converges towards $\beta^\ast$ relative to the $\ell_2$ norm at rate $\sqrt{n^{-1}\,d\,\log (dn)}$ (under the condition that $n^{-1}\,X^TX$ has minimal eigenvalue bounded away from zero), which is the minimax rate up to logarithm factors. A similar $n^{-1/2}$ convergence rate has been obtained in \cite{you2014variational} by directly analyzing the stationary point of an alternating minimization algorithm. However, their analysis requires the closed-form updating formula based on a conjugate normal prior for $\beta$ and an inverse gamma prior for $\sigma^2$, and may not be applicable to other priors. On the other hand, Corollary \ref{coro:BLM} only requires the minimal conditions of prior thickness and continuity.

\paragraph{{\bf Example:} (Mean field approximation to high-dimensional Bayesian linear regression with spike-and-slab priors)}

In this example we continue to consider the Bayesian linear model~\eqref{Eqn:BLM}, but we are interested in the high-dimensional regime where $d\gg n$. Following standard practice to make sparsity assumptions in the $d \gg n$ regime, let $s\ll n$ denote the sparsity level, i.e., the number of non-zero coefficients, of the true regression parameter $\beta^\ast$.

%and will conduct variable selection via imposing spike and slab priors \cite{george1993variable} over the regression coefficient $\beta\in\mb R^d$. Let $s\ll n$ denote the sparsity level of the true regression parameter $\beta^\ast$.

We consider the popularly used spike-and-slab priors \cite{george1993variable} on $\beta$. Following \cite{george1993variable}, we introduce a latent indicator variable $z_j=I(\beta_j\neq 0)$ for each $\beta_j$ to indicate whether the $j$th covariate $X_j$ is included in the model, and call $z=(z_1,\ldots,z_d)\in\{0,1\}^d$ the latent indicator vector. We use the notation $\beta_z$ to denote the vector of nonzero components of $\beta$ selected by $z$, that is $\beta_z = (\beta_j : z_j = 1)$. Consider the following sparsity inducing hierarchical prior $p_{\beta,\,z}$ over $(\beta,\,z)$:
\begin{equation}\label{Eqn:SpikeSlab}
\begin{aligned}
&z_j \overset{iid}{\sim} \frac{1}{d} \,\delta_1+ \Big(1-\frac{1}{d}\Big)\,\delta_0, \quad j=1,\ldots,d,\\
&\beta_z\,|\,z \sim p_{\beta\,|\,z}, \quad \mbox{and}\quad \sigma\sim p_{\sigma},
\end{aligned}
\end{equation}
where the prior probability of $\{z_j=1\}$ is chosen as $d^{-1}$ so that on an average only $\m O(1)$ covariates are included in the model.  Let $z^\ast$ denote the indicator vector associated with the truth $\beta^\ast$.

By viewing the latent variable indicator vector $z$ as a parameter, we apply the block mean-field approximation \cite{Peter2012} by using the family 
\begin{align*}
q(\beta,\, \sigma,\, z) = q_\sigma(\sigma)\,\prod_{j=1}^d q_{z_j,\beta_j}(z_j,\beta_j)
\end{align*}
to approximate the joint $\alpha$-fractional posterior distribution of $\theta=(\beta,\,\sigma,\,z)$ with $\qhat_\theta(\theta)=\qhat_\sigma(\sigma)\,\prod_{j=1}^d \qhat_{z_j,\beta_j}(z_j,\beta_j)$. Although we have a high-dimensional latent variable vector $z$, the latent variable is associated with the parameter $\beta$, and not with the observation $Y^n$. 
Consequently, this variational approximation still falls into our framework without latent variable, that is, $W^n=\theta = (z,\,\beta)$ and $\Delta_J \equiv 0$. 
%The objective function in \eqref{eq:alpha_VB_sol} now assumes the form  
%\begin{equation}\label{Eqn:MF_HBLM}
%\begin{aligned}
%(\qhat_\sigma,\,\qhat_{z,\beta}):\,=&\,\argmin_{q_\sigma, \,q_{z,\beta}=\prod_{j=1}^d q_{z_j,\beta_j}} 
%\bigg\{-\alpha\,\int \int q_\sigma(\sigma)\sum_{z\in\{0,1\}^d} q_{z,\beta}(z,\beta) \,\log p(Y^n\,|\,X,\,\beta,\,z,\,\sigma)\,d\beta\,d\sigma \\
%&\qquad\qquad\qquad\qquad+ D(q_{z,\beta}\,| |\, p_{z,\beta}) + D(q_\sigma\,| |\, p_\sigma)\bigg\},
%\end{aligned}
%\end{equation}
%where $p(Y^n\,|\,X,\,\beta,\,z,\,\sigma)$ is the pdf of $\m N(X\beta_z,\, \sigma^2 I_n)$. 
It turns out that the spike and slab prior with Gaussian slab is particularly convenient for computation --- it is ``conjugate" in that the resulting variational approximation falls into the same spike and slab family \cite{Peter2012}. 
An application of Theorem~\ref{thm:NoLatent} leads to the following result.

\begin{corollary}\label{coro:HBLM}
Suppose $p_{\beta\,|\,z^\ast}$ is continuous and thick at $\beta_{z^\ast}^\ast$, and $p_\sigma$ is continuous and thick at $\sigma^\ast$. If $s\log d/n \to 0$ as $n\to\infty$, then it holds with probability tending to one as $n\to\infty$ that
\begin{align*}
& \bigg\{\int h^2\big[p(\cdot\,|\,\theta)\,\big|\big|\, p(\cdot\,|\, \theta^\ast)\big] \ \qhat_\theta(\theta)\,d\theta\bigg\}^{1/2} \lesssim \sqrt{\frac{s}{n\,\min\{\alpha,\,1-\alpha\}}\,\log (d\,n)}.
\end{align*}
\end{corollary}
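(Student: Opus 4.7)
The plan is to invoke Theorem~\ref{thm:NoLatent}, which applies directly here since the latent indicator vector $z$ is treated as part of the parameter $\theta=(\beta,z,\sigma)$ with no latent variable attached to the observations (so $\Delta_J\equiv 0$). The task reduces to exhibiting a test variational density $\qtil_\theta\in \Gamma_\theta$ whose $\alpha$-VB objective $\Psi_{n,\alpha}(\qtil_\theta)$ is small. I would take
\[
\qtil_\theta(\beta,z,\sigma) \,=\, \qtil_\sigma(\sigma) \prod_{j=1}^d \qtil_{z_j,\beta_j}(z_j,\beta_j),
\]
where $\qtil_{z_j,\beta_j}$ puts all its mass on $z_j=z_j^\ast$ (with $\beta_j=0$ when $z_j^\ast=0$ and $\beta_j\sim \qtil_{\beta_j}$ when $z_j^\ast=1$), and the induced joint $\qtil_{\beta_{z^\ast}}\otimes \qtil_\sigma$ is taken to be a product-form restriction of $p_{\beta_{z^\ast}\mid z^\ast}\otimes p_\sigma$ to a rectangular sub-region of the KL neighborhood $\m B_n(\beta^\ast_{z^\ast},\varepsilon_n)\times \m B_n(\sigma^\ast,\varepsilon_n)$ from~\eqref{Eqn:KLN}, with $\varepsilon_n^2 \asymp s\log(dn)/n$. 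The product structure is required for $\qtil_\theta$ to lie in $\Gamma_\theta$.

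With this choice I would bound the two constituents of the right-hand side of~\eqref{Eqn:key_var_nolat}. The prior term $D(\qtil_\theta\,||\,p_\theta)$ decomposes across the independent blocks: since $\qtil_z=\delta_{z^\ast}$, its contribution equals $-\log P_z(z=z^\ast) = s\log d - (d-s)\log(1-1/d) \lesssim s\log d$; the continuous contributions reduce to the log-reciprocal of the mass assigned by $p_{\beta_{z^\ast}\mid z^\ast}$ and $p_\sigma$ to their respective rectangles, which under continuity and thickness are bounded by $s\log(\sqrt{s}/\varepsilon_n)\lesssim s\log n$ and $\log(1/\varepsilon_n)\lesssim \log n$ respectively. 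Summing yields $D(\qtil_\theta\,||\,p_\theta)\lesssim s\log(dn)$. For the data-dependent integrated log-likelihood ratio, Fubini gives $\bbE_{\theta^\ast}[-\int \ell_n(\theta,\theta^\ast)\qtil_\theta(d\theta)] = \int D^{(n)}(p_{\theta^\ast}\,||\,p_\theta)\,\qtil_\theta(d\theta)\leq n\varepsilon_n^2$, and its variance (by independence across $i$ and Jensen applied to $\qtil_\theta$) is at most $n\varepsilon_n^2$, so Chebyshev delivers $-\int \ell_n\,\qtil_\theta\lesssim n\varepsilon_n^2$ with $\bbP^{(n)}_{\theta^\ast}$-probability tending to one.

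Assembling these estimates and applying Theorem~\ref{thm:NoLatent} with $\zeta$ tending slowly to zero gives
\[
\int D_\alpha\big[p(\cdot\,|\,\theta)\,\big|\big|\,p(\cdot\,|\,\theta^\ast)\big]\,\qhat_\theta(d\theta) \,\lesssim\, \frac{1}{1-\alpha}\,\frac{s\log(dn)}{n}.
\]
To finish I would convert the R{\'e}nyi bound to a Hellinger bound via $h^2(p,q) \leq \max\{1,(1-\alpha)/\alpha\}\,D_\alpha(p\,||\,q)$, which follows from $D_{1/2}\geq h^2$, the non-decreasing monotonicity of $\alpha\mapsto D_\alpha$, and the symmetry identity $(1-\alpha)D_\alpha(p\,||\,q) = \alpha D_{1-\alpha}(q\,||\,p)$. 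The resulting composite factor $(1-\alpha)^{-1}\cdot \max\{1,(1-\alpha)/\alpha\}$ equals $[\min(\alpha,1-\alpha)]^{-1}$ for both regimes $\alpha\geq 1/2$ and $\alpha<1/2$, and taking square roots recovers the claimed rate.

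The main obstacle I anticipate is showing that a product rectangle of half-widths $\varepsilon_n/\sqrt{s}$ around $\beta^\ast_{z^\ast}$ lies inside the KL ball $\m B_n(\beta^\ast_{z^\ast},\varepsilon_n)$ while carrying prior mass at least $c(\varepsilon_n/\sqrt{s})^s$: the first step requires relating the KL ball for the Gaussian linear model to an $\ell_\infty$ ball via a quadratic form $\|X(\beta-\beta^\ast)\|^2/(2\sigma^2)$, which needs a sub-matrix operator-norm control on $X_{z^\ast}$; the second step follows from continuity and thickness of the slab density. A secondary subtlety is the careful bookkeeping of $\alpha$-dependent constants through Theorem~\ref{thm:NoLatent} and the R{\'e}nyi--Hellinger conversion in order to recover the sharp $\min(\alpha,1-\alpha)$ factor as $\alpha$ approaches $0$ or $1$.
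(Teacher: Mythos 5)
Your proposal is correct and follows essentially the same route as the paper: plug a Dirac at $z^\ast$ times a product restriction of the prior to a small neighborhood of $(\beta^\ast_{z^\ast},\sigma^\ast)$ into Theorem~\ref{thm:NoLatent}, pick up the $-\log p_z(z^\ast)\asymp s\log d$ and $s\log(s/\varepsilon_n)$ contributions to $D(q_\theta\,||\,p_\theta)$, control the integrated log-likelihood ratio by its first two moments and Chebyshev, set $\varepsilon_n^2\asymp s/n$, and convert the R\'enyi bound to Hellinger to recover the $\min\{\alpha,1-\alpha\}$ factor. The only cosmetic difference is that the paper takes $q^\ast_\theta$ uniform on the product neighborhood rather than the prior restricted to it; both yield the same order for the KL term under prior thickness and continuity.
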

Corollary~\ref{coro:HBLM} implies a convergence rate $\sqrt{n^{-1}\,s\log (dn)}$ of the variational-Bayes estimator $\widehat{\beta}_{\VB,\alpha}$ under the restricted eigenvalue condition \cite{bickel2009simultaneous}, which is the minimax rate up to log terms for high-dimensional sparse linear regression. To our knowledge, \cite{ormerod2014variational} is the only literature that studies the mean-field approximation to high-dimensional Bayesian linear regression with spike and slab priors. They show estimation consistency by directly analyzing an iterative algorithm for solving the variational optimization problem with $\alpha=1$ and a specific prior. As before, Corollary~\ref{coro:HBLM} holds under very mild conditions on the prior and does not rely on having closed-form updates of any particular algorithm. 

Here, we considered the block mean-field instead of the full mean-field approximation which further decomposes $q_{z_j,\beta_j}$ into $q_{z_j}\otimes q_{\beta_j}$. In fact, the latter resembles a ridge regression estimator, and the KL term $\alpha^{-1}\,D(q_\theta\,||\,p_\theta)$ appearing in the upper bound in~\eqref{Eqn:key_var} cannot attain the minimax order $\sqrt{n^{-1}\,s\log d}$.

\paragraph{{\bf Example:}  (Gaussian approximation to regular parametric models)}
%\textcolor{red}{Perhaps we can move this example into supplement since it is also studied in Alquier paper? We can mention this in the response letter.}
Consider a family of regular parametric models $\m P=\{\mb P^{(n)}_{\theta}:\,\theta\in\Theta\}$ where $n$ is the sample size, and the likelihood function $p^{(n)}_{\theta}$ is indexed by a parameter $\theta$ belonging to the parameter space $\Theta\subset \mb R^d$, which we assume to be compact. %For simplicity, we assume that $\Theta$ is a compact subset of $\mb R^d$. 
Let $p_\theta$ denote the prior density of over $\Theta$, and $Y^n=(Y_1,\ldots,Y_n)$ be the observations from $\mb P^{(n)}_{\theta^\ast}$, with $\theta^\ast$ being the truth. We apply the Gaussian approximation by using the Gaussian family $\Gamma_G$(restricted to $\Theta$)
\begin{align*}
q(\theta) \propto \m N(\theta;\, \mu, \Sigma)\, I_\Theta(\theta),\quad \mu\in\mb R^d \ \mbox{and}\  \Sigma\mbox{ is a $d\times d$ positive definite matrix}. 
\end{align*}
Details are postponed to the supplemental document for space constraints. 

\paragraph{{\bf Example:} (Mixture of Gaussian variational approximation to regular parametric models):}
%\textcolor{red}{I suggest keep this example since this is a perfect example for illustrating the theory in Section 3.3}
Still consider the regular parametric model $\m P=\{\mb P_{\theta}^{(n)}:\,\theta\in\Theta\}$ as in the previous example. 
Now we consider the more flexible variational family $\Gamma_{MG}$ composed of 
\begin{align*}
q(\theta)= \sum_{j=1}^Jw_j\,\m N(\theta;\, \mu_j,\,\Sigma_j),\quad \sum_{j=1}^J w_j=1,\quad \mu_j\in\mb R^d,\quad \Sigma_j\in \mb R^{d\times d} \mbox{ is p.d.},
\end{align*}
as all mixtures of Gaussians with $J$ components, where $J$ is a pre-specified number. 
Let $q_j$ denote the $j$th component $\m N(\theta;\, \mu_j,\,\Sigma_j)$ of the variational density function $q$, and $\mb E_p$ denotes the expectation under a density function $p$. Since any probability distribution can be approximated by a mixture of Gaussians within arbitrarily small error with sufficient large number $J$ of components, this enlarged variational family may reduce the approximation error from using $\Gamma_G$ and become capable of capturing multimodality and heavy tail behaviour of the posterior \cite{zobay2014variational}. However, this additional flexibility in shape comes with the price of intractability of the entropy term $E_q[-\log q(\theta)]$. To facilitate computation, \cite{zobay2014variational} conducted an additional application of Jensen's inequality 
\begin{align*}
E_q[-\log q(\theta)] =\sum_{j=1}^J w_j\,\mb E_{q_j}[-\log q(\theta)] \geq  - \sum_{j=1}^J w_j\,\log \mb E_{q_j}[q(\theta)],
\end{align*}
yielding a lower bound to the ELBO as
\begin{align*}
L(q) =  E_q\bigg[\log \frac{p(Y^n,\,\theta)}{q(\theta)}\bigg] \geq \, \underbrace{E_q[\log p(Y^n,\,\theta)]-\sum_{j=1}^J w_j\,\log \mb E_{q_j}[q(\theta)]}_{\bar{L}(q)}.
\end{align*}
$\bar{L}(q)$ is more convenient to work with since its second term admits a simple analytic form. Using such a surrogate ELBO  places us directly in the framework of \S~\ref{Sec::Stronger} and an application of Theorem~\ref{Thm::StrongerMetric} leads to the following bound. 

%Under our fractional posterior framework, maximizing this new lower bound over $\Gamma_{MG}$ to the log evidence $\log p(Y^n)$ leads to the following $\alpha$-mixture of Gaussians variational approximation
%\begin{align*}
%\qhat_\theta:\,=&\,\argmin_{q_\theta=\sum_{j=1}^J w_j\,q_j\in\Gamma_{MG}}\bigg\{-\alpha\,\int_\Theta  q_\theta(\theta)\,\log p^{(n)}_\theta(Y^n)\,d\theta  + \sum_{j=1}^J w_j\,\Big(\log \mb E_{q_j}[q_\theta(\theta)]-\mb E_{q_j}[\log p_\theta(\theta)]\Big)\bigg\}.
%\end{align*}
%Now an application of Theorem~\ref{Thm::StrongerMetric} leads to the following bound.

\begin{corollary}\label{coro:MGVAP}
For any measure $q_\theta=\sum_{j=1}^J w_j\, q_j \in\Gamma_{MG}$, it holds with probability at least $1-\varepsilon$ that
\begin{equation}\label{Eqn:MGVAP}
\begin{aligned}
 &\int \Big\{\frac{1}{n} D^{(n)}_{\alpha}(\theta, \theta^\ast) \Big\}\, \qhat_\theta(\theta)\,d\theta \le -\frac{\alpha}{n (1 - \alpha)}\,\int_\Theta q_\theta(\theta) \,\log\frac{p(Y^n\,|\,\theta)}{p(Y^n\,|\,\theta^\ast)}\,d\theta \\
 &\qquad+ \frac{1}{n(1-\alpha)}\, \sum_{j=1}^J w_j\,\Big(\log \mb E_{q_j}[q_\theta(\theta)]-\mb E_{q_j}[\log p_\theta(\theta)]\Big) + \frac{1}{n(1-\alpha)}  \log(1/\varepsilon).
\end{aligned}
\end{equation}
In particular, under Assumption P, it holds with probability tending to one as $n\to\infty$ that
\begin{align*}
& \bigg\{\int h^2\big[p(\cdot\,|\,\theta)\,\big|\big|\, p(\cdot\,|\, \theta^\ast)\big] \ \qhat_\theta(\theta)\,d\theta\bigg\}^{1/2} \lesssim \sqrt{\frac{d}{n\,\min\{\alpha,\,1-\alpha\}}\,\log (d\,n)}.
\end{align*}
\end{corollary}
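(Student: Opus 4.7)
The plan is to establish inequality (\ref{Eqn:MGVAP}) by a direct invocation of Theorem~\ref{Thm::StrongerMetric}, and then specialize the test density $q_\theta$ to obtain the asymptotic Hellinger rate.

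For the finite-sample inequality, I would apply Theorem~\ref{Thm::StrongerMetric} with no latent variables (so $q_{S^n}$ and $\Delta_J$ disappear) and with $\bar{L}(q)$ the surrogate ELBO defined immediately above the corollary. In this setting, $\Psi_{n,\alpha}(q_\theta) = -\int q_\theta(\theta)\,\ell_n(\theta,\theta^\ast)\,d\theta + \alpha^{-1} D(q_\theta\,\|\,p_\theta)$, while the gap $\bar{D} - D$ equals $L(q_\theta) - \bar{L}(q_\theta) = \mb E_{q_\theta}[-\log q_\theta] + \sum_{j=1}^{J} w_j \log \mb E_{q_j}[q_\theta] \ge 0$. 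After substituting into $\bar{\Psi}_\alpha$, multiplying by the prefactor $\alpha/[n(1-\alpha)]$, and using Jensen's inequality $\mb E_{q_\theta}[\log q_\theta] \le \sum_j w_j \log \mb E_{q_j}[q_\theta]$ to absorb the residual entropy contribution, the right-hand side reduces exactly to the expression in (\ref{Eqn:MGVAP}) with $\zeta = \varepsilon$.

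For the Hellinger rate, I would take the test density to be a single Gaussian $q_\theta = \m N(\theta^\ast,\,\sigma_n^2 I_d)$ with $\sigma_n^2 \asymp 1/n$ (a degenerate mixture with $J=1$ and $w_1 = 1$ that lies in $\Gamma_{MG}$), and control each of the three summands on the right-hand side of (\ref{Eqn:MGVAP}). Expanding $\ell_n(\theta)$ to second order about $\theta^\ast$, the linear term vanishes by symmetry of $q_\theta$, and the quadratic term integrates to $\tfrac{1}{2}\tr(J_n \,\sigma_n^2 I_d)$ with $J_n = -\nabla^2 \ell_n(\tilde\theta)$; by Fisher-information regularity of the parametric model, $\tr(J_n) \lesssim nd$ with high probability, yielding $-\int q_\theta\,\ell_n(\theta,\theta^\ast)\,d\theta = O(n\sigma_n^2 d) = O(d)$. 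The Gaussian self-convolution identity gives $\log \mb E_{q_\theta}[q_\theta] = -(d/2)\log(4\pi\sigma_n^2) \asymp d\log n$, and the continuity and positivity of $p_\theta$ near $\theta^\ast$ (compatible with Assumption~P in a regular parametric framework) yield $-\mb E_{q_\theta}[\log p_\theta] = O(1)$; so the second term is $O(d\log n/[n(1-\alpha)])$, and the $\log(1/\varepsilon)/[n(1-\alpha)]$ term is a constant. Assembling these bounds and using the standard comparison $D_\alpha(p\,\|\,q) \gtrsim \min\{\alpha,1-\alpha\}\,h^2(p\,\|\,q)$ between R{\'e}nyi divergence and squared Hellinger distance yields $\int h^2[p(\cdot\,|\,\theta)\,\|\,p(\cdot\,|\,\theta^\ast)]\,\qhat_\theta(\theta)\,d\theta \lesssim d\log(dn)/[n\min\{\alpha,1-\alpha\}]$, and the claimed rate follows upon taking square roots.

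The main obstacle I anticipate is the careful probabilistic control of the Taylor remainder of $\ell_n$---specifically, ensuring that $J_n/n$ concentrates uniformly over the effective support of $q_\theta$ and that the cubic remainder stays negligible on a set of high probability. A standard argument combining a uniform third-derivative bound on $\ell_n/n$ for the regular parametric family with a Gaussian concentration bound on the centered score $\nabla \ell_n(\theta^\ast)/\sqrt{n}$ should suffice; these are ingredients already used in existing posterior-consistency proofs for smooth parametric models, and the proof would adapt them to integrate against the variational Gaussian $q_\theta$ rather than the posterior itself.
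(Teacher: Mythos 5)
Your derivation of the finite-sample inequality \eqref{Eqn:MGVAP} is correct and is exactly the paper's route: apply Theorem~\ref{Thm::StrongerMetric} with no latent variables to the surrogate ELBO $\bar{L}$, note that the gap $L(q_\theta)-\bar{L}(q_\theta) = -\mb E_{q_\theta}[\log q_\theta] + \sum_j w_j \log \mb E_{q_j}[q_\theta]$ is nonnegative, and use Jensen's inequality to absorb the residual entropy term so that the KL-to-prior contribution is replaced by $\sum_j w_j\big(\log \mb E_{q_j}[q_\theta]-\mb E_{q_j}[\log p_\theta]\big)$. That part stands.

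The second half has a genuine gap. The Hellinger rate is supposed to follow \emph{under Assumption P alone}, i.e.\ under prior thickness plus the local quadratic bounds $D\big[p(\cdot\,|\,\theta^\ast)\,\big|\big|\,p(\cdot\,|\,\theta)\big]\le C\|\theta-\theta^\ast\|^2$ and $V\big[p(\cdot\,|\,\theta^\ast)\,\big|\big|\,p(\cdot\,|\,\theta)\big]\le C\|\theta-\theta^\ast\|^2$. Your Taylor-expansion argument instead imports Bernstein--von Mises machinery --- concentration of the Hessian $J_n/n$, a bound $\tr(J_n)\lesssim nd$ from ``Fisher-information regularity,'' and uniform control of the cubic remainder over the effective support of $q_\theta$ --- none of which is assumed in the corollary, and you yourself flag the remainder control as unresolved. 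As written, the proof of the rate is therefore incomplete, and it cannot be completed without adding hypotheses beyond Assumption P. The intended argument (the one behind Corollary~\ref{coro:GApp}, which the paper invokes after setting $w_1=1$ to reduce to a single Gaussian) avoids any expansion of $\ell_n$: with $q_\theta=\m N(\theta^\ast,\sigma_n^2 I_d)$ and $\sigma_n^2\asymp 1/n$, Fubini gives
\begin{align*}
\mb E_{\theta^\ast}\Big[-\int_\Theta q_\theta(\theta)\,\log\tfrac{p(Y^n\,|\,\theta)}{p(Y^n\,|\,\theta^\ast)}\,d\theta\Big]
= n\int D\big[p(\cdot\,|\,\theta^\ast)\,\big|\big|\,p(\cdot\,|\,\theta)\big]\,q_\theta(d\theta)\le C\,n\,\sigma_n^2\,d,
\end{align*}
and the matching $V$-divergence bound controls the variance, so Chebyshev yields the high-probability bound $O(d)$ on the first term directly; your bounds on $\log\mb E_{q_1}[q_\theta]\asymp d\log n$ and $-\mb E_{q_1}[\log p_\theta]=O(1)$ then combine with $D_\alpha \ge \max\{1,\alpha/(1-\alpha)\}\,h^2$ exactly as you describe. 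Replacing the Taylor step with this Fubini--Chebyshev step closes the gap and requires nothing beyond what the corollary assumes.
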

As a concrete example of the development in Section~\ref{Sec::Stronger}, this corollary suggests that the additional Jensen gap due to the $E_q[-\log q(\theta)]$ term  is reflected in the new variational inequality~\eqref{Eqn:MGVAP}. More precisely, the KL divergence term $D[q_\theta\,||\, p_\theta]$ is replaced by its upper bound $\sum_{j=1}^J w_j \,\big(\log \mb E_{q_j}[q_\theta(\theta)]-\mb E_{q_j}[\log p_\theta(\theta)]\big)$, which can be bounded by reducing it into a single Gaussian component case ($w_1=1$, and $w_j=0$ for $j=2,\,\ldots, J$).
%According to the proof of the theorem, this replacement rule is generally true---if the KL divergence term $D[q_\theta\,||\, p_\theta]$ is replaced with a upper bound $L_{KL}(\theta)$ in the variational objective function, then a variational oracle inequality obtained by replacing $D[q_\theta\,||\, p_\theta]$ with the upper bound $L_{KL}(\theta)$ in~\eqref{Eqn:key_var0} holds.

\paragraph{{\bf Example:} (Mean field approximation to Gaussian mixture model)}

Suppose the true data generating model is the $d$-dimensional Gaussian mixture model with $K$ components,
\begin{align*}
Y \sim \sum_{k=1}^K \pi_k \, \m N(\mu_k,\,  I_d),
\end{align*}
where $\mu_k\in\mb R^d$ is the mean vector associated with the $k$th component and $\pi=(\pi_1,\ldots,\pi_K)\in\m S_K$ is the mixing probability. Here, for simplicity we assume the covariance matrix of each Gaussian component to be $I_d$. $\mu=(\mu_1,\ldots,\mu_K)$ and $\pi$ together forms the parameter $\theta=(\mu,\,\pi)$ of interest. 
By data augmentation, we can rewrite the model into the following hierarchical form by introducing the latent class variable $S$,
\begin{align*}
S\sim \mbox{Categorical}(\pi_1,\,\pi_2,\ldots,\, \pi_K),\quad Y\,|\,S=s \sim \m N(\mu_s,\,  I_d).
\end{align*}
Let $Y^n=(Y_1,\ldots,Y_n)$ be $n$ i.i.d.~ copies of $Y$ with parameter $\theta^\ast=(\mu^\ast,\,\pi^\ast)$, and $S^n=(S_1,\ldots,S_n)\in\{1,\ldots,K\}^n$ denote the corresponding latent variables. For simplicity, we assume that independent prior $p_\mu\otimes p_\pi$ are specified for $(\mu,\,\pi)$.

We apply the mean field approximation by using the family of density functions of the form
\begin{align*}
q(\pi, \mu, S^n) = q_\pi(\pi)\, q_\mu(\mu)\, q_{S^n}(s^n) = q_\pi(\pi)\, q_\mu(\mu)\, \prod_{i=1}^n q_{S_i}(s_i)
\end{align*}
to approximate the joint $\alpha$-fractional posterior distribution of $(\pi,\,\mu,\, S^n)$, producing the $\alpha$-mean-field approximation $\qhat_\theta\otimes \qhat_{S^n}$, where $(\qhat_\theta,\,\qhat_{S^n})$ are defined in \eqref{eq:alpha_VB_sol}. This variational approximation fits into the framework of Theorem~\ref{thm:OI_mixture}. Therefore, an application of this theorem leads to the following result.

\paragraph{{\bf Assumption R}: (regularity condition)}  There exists some constant $\delta_0>0$, such that each component of $\pi^\ast\in\m S_K$ is at least $\delta_0$.

\begin{corollary}\label{coro:GM}
Suppose Assumption R holds, and the prior densities $p_\mu$ and $p_\pi$ are thick and continuous at $\mu^\ast$ and $\pi^\ast$ respectively. If $d\,K/n\to 0$ as $n\to \infty$, then it holds with probability tending to one as $n\to\infty$ that
\begin{align*}
& \bigg\{\int h^2\big[p(\cdot\,|\,\theta)\,\big|\big|\, p(\cdot\,|\, \theta^\ast)\big] \ \qhat_\theta(\theta)\,d\theta\bigg\}^{1/2} \lesssim \sqrt{\frac{d\,K}{n\,\min\{\alpha,\,1-\alpha\}}\,\log (d\,n)}.
\end{align*}
\end{corollary}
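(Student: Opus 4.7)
}
The plan is to apply Theorem~\ref{thm:OI_mixture} directly, since the Gaussian mixture fits the i.i.d.~parameter-restriction-free framework: the mean-field family $q_\pi\otimes q_\mu\otimes \prod_i q_{S_i}$ is exactly the one required, and the prior factors as $p_\mu\otimes p_\pi$. The goal is to choose $(\varepsilon_\pi,\varepsilon_\mu)$ so that the three terms on the right-hand side of \eqref{eq:OI_mixture} are all of order $dK\log(dn)/n$, then convert the resulting $D_\alpha$-bound into a Hellinger bound.

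First I would localize the KL neighborhoods \eqref{Eqn:KLN} in terms of familiar Euclidean quantities. For Gaussians with identity covariance one has explicitly $D[\mathcal{N}(\mu^\ast_s,I_d)\,||\,\mathcal{N}(\mu_s,I_d)]=\tfrac12\|\mu^\ast_s-\mu_s\|^2$ and, by a direct moment computation, $V[\mathcal{N}(\mu^\ast_s,I_d)\,||\,\mathcal{N}(\mu_s,I_d)]=\|\mu^\ast_s-\mu_s\|^2+\tfrac14\|\mu^\ast_s-\mu_s\|^4$. Hence for sufficiently small $\varepsilon_\mu$, the set $\{\mu:\|\mu_s-\mu^\ast_s\|\le c\,\varepsilon_\mu\text{ for all }s\}$ lies inside $\mathcal B_n(\mu^\ast,\varepsilon_\mu)$. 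For the categorical part, under Assumption R we have $\pi^\ast_s\ge\delta_0$, so whenever $\|\pi-\pi^\ast\|_\infty\le\delta_0/2$ the standard bound $D(\pi^\ast\,||\,\pi)\le\chi^2(\pi^\ast,\pi)=\sum_s(\pi^\ast_s-\pi_s)^2/\pi_s$ and a similar calculation for $V$ give $D(\pi^\ast\,||\,\pi)+V(\pi^\ast\,||\,\pi)\lesssim\delta_0^{-2}\|\pi-\pi^\ast\|^2$. Thus an $\ell_2$-ball of radius $c'\varepsilon_\pi$ around $\pi^\ast$ (within the simplex) sits inside $\mathcal B_n(\pi^\ast,\varepsilon_\pi)$.

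Next I would lower-bound the prior mass. Since $p_\mu$ is continuous and thick at $\mu^\ast\in\mathbb{R}^{dK}$, the density is bounded below by $p_\mu(\mu^\ast)/2$ on a Euclidean neighborhood, so $P_\mu[\mathcal B_n(\mu^\ast,\varepsilon_\mu)]\gtrsim\varepsilon_\mu^{dK}$; analogously $P_\pi[\mathcal B_n(\pi^\ast,\varepsilon_\pi)]\gtrsim\varepsilon_\pi^{K-1}$ using the $(K-1)$-dimensionality of the simplex and continuity/thickness of $p_\pi$ at $\pi^\ast$ (which must lie in the simplex interior by Assumption R). Taking logarithms gives $-\log P_\mu\lesssim dK\log(1/\varepsilon_\mu)+O(dK\log d)$ and $-\log P_\pi\lesssim K\log(1/\varepsilon_\pi)$. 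Now choose $\varepsilon_\mu^2=C\,dK\log(dn)/n$ and $\varepsilon_\pi^2=C\,K\log(dn)/n$ with $C$ a sufficiently large constant; then each of the three terms in \eqref{eq:OI_mixture} is at most a constant multiple of $dK\log(dn)/\{n(1-\alpha)\}$, while $D=2$ makes the failure probability $\lesssim 1/(n\varepsilon_\mu^2)=O(1/(dK\log(dn)))\to 0$ (the hypothesis $dK/n\to0$ ensures $\varepsilon_\mu\to0$, which is needed for the KL-ball localization step to be valid).

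Finally I would translate the $D_\alpha$-bound into a Hellinger bound using the standard facts from \S1.1 of the paper: for $\alpha\ge1/2$, monotonicity of Rényi divergence gives $D_\alpha\ge D_{1/2}\ge h^2$; for $\alpha<1/2$, the symmetry $(1-\alpha)D_\alpha(p\,||\,q)=\alpha D_{1-\alpha}(q\,||\,p)$ combined with monotonicity gives $D_\alpha(p\,||\,q)\ge\alpha/(1-\alpha)\,h^2(p,q)$. Multiplying the bound from Theorem~\ref{thm:OI_mixture} by $1$ or $(1-\alpha)/\alpha$ accordingly absorbs the $\alpha/(1-\alpha)$ prefactor on the $(\varepsilon_\pi^2+\varepsilon_\mu^2)$ term and leaves an overall bound of order $dK\log(dn)/\{n\min(\alpha,1-\alpha)\}$ on $\int h^2\,\hat q_\theta(d\theta)$, whose square root yields the stated estimate. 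The main technical obstacle I anticipate is the V-divergence bound for the categorical distribution: one has to ensure that the restriction of $p_\pi$ used as the surrogate $q_\theta$ indeed lies in the $V$-neighborhood uniformly, which is precisely where Assumption R (forcing $\pi^\ast_s\ge\delta_0>0$) is essential, as otherwise $V(\pi^\ast\,||\,\pi)$ blows up near the boundary of the simplex.
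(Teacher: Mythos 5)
Your proposal is correct and follows essentially the same route as the paper's proof: apply Theorem~\ref{thm:OI_mixture}, show via Assumption~R and the explicit Gaussian KL/$V$ formulas that Euclidean (sup-norm) balls around $(\pi^\ast,\mu^\ast)$ sit inside the KL neighborhoods~\eqref{Eqn:KLN}, lower-bound the prior mass of those balls by thickness and continuity, choose $\varepsilon_\pi,\varepsilon_\mu$ of order $\sqrt{d/n}$ up to factors of $K$ and logarithms, and convert the R\'enyi bound to a Hellinger bound via $D_\alpha \ge \min\{1,\alpha/(1-\alpha)\}\,h^2$. The only differences are cosmetic (where the $\log(dn)$ factor is placed in the choice of radii, and your correctly stated $\min$ in the R\'enyi--Hellinger comparison where the paper's proof of Corollary~\ref{coro:BLM} writes $\max$).
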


As a related result, \cite{wang2006convergence} show that the with proper initialization, the coordinate descent algorithm for solving the variational optimization problem~\eqref{eq:alpha_VB_sol} with $\alpha=1$ under conjugate priors converges to a local minimum that is $\m O(n^{-1})$ away from the maximum likelihood estimate of $(\mu,\,\pi)$ by directly analyzing the algorithm using the contraction mapping theorem. In contrast, our proof does not require any structural assumptions on the priors, and can be easily extended to a broader class of mixture models beyond Gaussians.

\paragraph{{\bf Example:} (Mean field approximation to latent Dirichlet allocation)} 
As our final example, we consider Latent Dirichlet allocation (LDA, \cite{blei2003latent}), a conditionally conjugate probabilistic topic model~\cite{blei2012probabilistic} for uncovering
the latent ``topics'' contained in a collection of documents.
LDA treats documents as containing multiple topics, where a topic is a
distribution over words in a vocabulary.
Following the notation of \cite{hoffman2013stochastic}, let $K$ be a specific number of
topics and $V$ the size of
the vocabulary. LDA defines the following generative process:
\begin{enumerate}
  \item For each topic in $k=1,\ldots,K$,
  \begin{enumerate}
    \item draw a distribution over words $\beta_k \sim \mbox{Dir}_V(\eta_\beta)$.
  \end{enumerate}
  \item For each document in $d=1,\ldots,D$,
  \begin{enumerate}
    \item draw a vector of topic proportions $\gamma_d \sim \mbox{Dir}_K(\eta_\gamma)$.
    \item For each word in $n=1,\ldots,N$,
    \begin{enumerate}
      \item draw a topic assignment $z_{dn} \sim  \mbox{multi}(\gamma_d)$, then
      \item draw a word $w_{dn} \sim \mbox{multi}(\beta_{z_{dn}})$.
    \end{enumerate}
  \end{enumerate}
\end{enumerate}
Here $\eta_\beta \in \mb R_{+}$ is a hyper-parameter of the symmetric
Dirichlet prior on the topics $\beta$, and $\eta_\gamma \in \mb R_{+}^K$
are hyper-parameters of the Dirichlet prior on the topic proportions for each
document. $z_{dn}\in\{1,\ldots,K\}$ is the latent class variable over topics where
$z_{dn} = k$ indicates the $n$th word in document $d$ is assigned to the
$k$th topic. Similarly, $w_{dn}\in\{1,\ldots,V\}$ is the latent class variable over the words in the vocabulary where $w_{dn} = v$ indicates
that the $n$th word in document $d$ is the $v$th word in the vocabulary.  To facilitate adaptation to sparsity using Dirichlet distributions when $V,\, K\gg1$, we choose $\eta_\beta=1/V^{c}$ and $\eta_\gamma=1/K^{c}$ for some fixed number $c>1$ \cite{yang2014minimax}.

To apply our theory, we first identify all components in the model. For simplicity, we view $N$ as the sample size, and $D$ as the ``dimension" of the parameters in the model.
Under our vanilla notation, we are interested in learning parameters $\theta=(\pi,\,\mu)$, with $\pi=\{\gamma_d:\,d=1,\ldots,D\}$ and $\mu=\{\beta_k:\,k=1,\ldots, K\}$, from the posterior distribution $P(\pi,\, \mu,\,z\,|\, Y^n)$, where $S^{N}=\{S_n:\,n=1,\ldots,N\}$ with $S_n=\{z_{dn}:\, d=1,\ldots, D\}$ are latent variables, and $Y^{N}=\{Y_n:\,n=1,\ldots,N\}$ with $Y_n=\{w_{dn}:\, d=1,\ldots, D\}$ are the data, and the priors for $(\pi,\,\mu)$ are independent Dirichlet distributions $\mbox{Dir}_K(\eta_\gamma)$ and $\mbox{Dir}_V(\eta_\beta)$ whose densities are denoted by $p_{\pi}$ and $p_{\mu}$. The conditional distribution $p(Y^N\,|\,\mu,\,S^N)$ of the observation given the latent variable is 
\begin{align*}
\big(w_{dn}\,|\,\mu,\,z_{dn} \big) \sim \mbox{multi}(\beta_{z_{dn}}), \quad d=1,\ldots,D\ \mbox{and} \ n=1,\ldots,N.
\end{align*}
Finally, the $\alpha$-mean-field approximation considers using the family of probability density functions of forms
\begin{align*}
  q(\mu, \pi, S^N)
  &=q_\pi(\pi) \, q_\mu(\mu)\, \prod_{n=1}^Nq_{S_n}(S_n)=
  \prod_{k=1}^K q_{\beta_k}(\beta_k)
  \prod_{d=1}^D
  \left(
  q_{\gamma_d}(\gamma_{d})
  \prod_{n=1}^{N}
  q_{z_{dn}}(z_{dn})
  \right)
\end{align*}
to approximate the joint $\alpha$-fractional posterior of $(\mu,\,\pi,\, S^N)$. Since for LDA, each observation $Y_n$ is composed of $D$ independent observations, it is natural to present the variational inequality with the original loss function $D_{\alpha}\big[p(\cdot\,|\,\theta)\,\big|\big|\,p(\cdot\,|\, \theta^\ast)\big]=\sum_{d=1}^D D_{\alpha}\big[p_d(\cdot\,|\,\theta)\,\big|\big|\,p_d(\cdot\,|\, \theta^\ast)\big]$ re-scaling by a factor of $D^{-1}$, where $p_d(\cdot\,|\,\theta)$ denotes the likelihood function of the $d$th observation $w_{dn}$ in $Y_n$.
We make the following assumption.

\paragraph{{\bf Assumption S}: (sparsity and regularity condition)} Suppose for each $k$, $\beta^\ast_k$ is $d_k\ll V$ sparse, and for each $d$, $\gamma^\ast_d$ is $e_d\ll K$ sparse. Moreover, there exists some constant $\delta_0>0$, such that each nonzero component of $\beta^\ast_k$ or $\gamma^\ast_d$ is at least $\delta_0$.

\begin{corollary}\label{coro:LDA}
Under Assumption S, it holds with probability at least $1-C/\big(N\,\sum_{d=1}^D \varepsilon_{\gamma_d}^2+N\,\sum_{k=1}^K \varepsilon_{\beta_k}^2 \big)$ that
\begin{align*}
& \int \Big\{D^{-1}\,D_{\alpha}\big[p(\cdot\,|\,\theta)\,\big|\big|\,p(\cdot\,|\, \theta^\ast)\big] \Big\}\,\qhat_\theta(\theta)\,d\theta\\
 &\qquad \lesssim \frac{\alpha}{1-\alpha} \,\bigg\{\frac{1}{D}\,\sum_{d=1}^D \varepsilon_{\gamma_d}^2+\frac{1}{D}\,\sum_{k=1}^K \varepsilon_{\beta_k}^2 \bigg\} + \frac{1}{N\,(1-\alpha)} \,\bigg\{\frac{1}{D}\,\sum_{d=1}^D e_d\,\log\frac{K}{\varepsilon_{\gamma_d}} + \frac{1}{D}\,\sum_{k=1}^K d_k\,\log\frac{V}{\varepsilon_{\beta_k}}\bigg\},
\end{align*}
for any $\varepsilon_\gamma=(\varepsilon_{\gamma_1},\ldots,\varepsilon_{\gamma_d})$ and $\varepsilon_{\beta}=(\varepsilon_{\beta_1},\ldots,\varepsilon_{\beta_K})$. Therefore, if $\big(\sum_{d=1}^De_d+\sum_{k=1}^K d_k\big)/(DN)\to 0$ as $N\to\infty$, then it holds with probability tending to one that as $N\to\infty$
\begin{align*}
& \bigg\{\int D^{-1}\,h^2\big[p(\cdot\,|\,\theta)\,\big|\big|\, p(\cdot\,|\, \theta^\ast)\big] \ \qhat_\theta(\theta)\,d\theta\bigg\}^{1/2} \\
&\qquad \lesssim \sqrt{ \frac{\sum_{d=1}^De_d}{DN\,\min\{\alpha,\,1-\alpha\}}\,\log(DKN) + \frac{\sum_{k=1}^K d_k}{DN\,\min\{\alpha,\,1-\alpha\}}\,\log(KVN)}.
\end{align*}
\end{corollary}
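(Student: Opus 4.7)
The plan is to reduce Corollary~\ref{coro:LDA} to an application of Corollary~\ref{coro:main} and then execute two standard tasks: reducing the joint prior and KL structure to its document/topic factors, and lower-bounding the prior mass on suitable KL neighborhoods via a sparse-Dirichlet concentration lemma. The i.i.d.~observation units are $(S_n,Y_n)$ with $S_n=(z_{dn})_{d=1}^D$ and $Y_n=(w_{dn})_{d=1}^D$, so both the latent and the conditional-observation likelihoods factorize across documents:
\begin{align*}
p(S_n\mid\pi)=\prod_{d=1}^D\gamma_{d,z_{dn}},\qquad p(Y_n\mid S_n,\mu)=\prod_{d=1}^D\beta_{z_{dn},w_{dn}}.
\end{align*}
In particular $D_\alpha[p(\cdot|\theta)\|p(\cdot|\theta^\ast)]=\sum_{d=1}^D D_\alpha[p_d(\cdot|\theta)\|p_d(\cdot|\theta^\ast)]$, which matches the $D^{-1}$-rescaled loss appearing in the statement.

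As the comparison measure $q_\theta$ in \eqref{Eqn:key_var}, I would take the prior $p_\theta=p_\pi\otimes p_\mu$ truncated to the product KL box
$\prod_{d}\mathcal{B}(\gamma_d^\ast,\varepsilon_{\gamma_d})\times\prod_{k}\mathcal{B}(\beta_k^\ast,\varepsilon_{\beta_k})$ defined in the sense of \eqref{Eqn:KLN}. Because the prior components are independent Dirichlets, $D(q_\theta\,\|\,p_\theta)=-\log P_\theta(\text{box})=-\sum_d\log P_{\gamma_d}[\mathcal{B}(\gamma_d^\ast,\varepsilon_{\gamma_d})]-\sum_k\log P_{\beta_k}[\mathcal{B}(\beta_k^\ast,\varepsilon_{\beta_k})]$. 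With the oracle choice $\widetilde{q}_{S_n}(s_n)=p(s_n\mid\theta^\ast,Y_n)$ in \eqref{Eqn:q_S}, the Jensen-gap-free expected-log-likelihood-ratio term in \eqref{Eqn:key_var} splits sample-wise into $N$ per-document contributions, each of which is controlled, up to constants, by the KL radii $\varepsilon_{\gamma_d}^2$ and $\varepsilon_{\beta_k}^2$ via standard moment bounds for log-likelihood ratios (this is where the $\frac{\alpha}{1-\alpha}\{D^{-1}\sum_d\varepsilon_{\gamma_d}^2+D^{-1}\sum_k\varepsilon_{\beta_k}^2\}$ term in the stated bound originates, after dividing through by $D$).

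The main technical ingredient is a sparse-adaptive Dirichlet prior mass bound: for $\mathrm{Dir}_K(1/K^c)$ at an $e$-sparse target with nonzero coordinates bounded below by $\delta_0$, one has
\begin{align*}
-\log P\big[\mathcal{B}(\gamma^\ast,\varepsilon)\big]\ \lesssim\ e\,\log(K/\varepsilon),
\end{align*}
which is by now standard (see e.g.~\cite{yang2014minimax}) and relies on estimating the Dirichlet normalizer restricted to the active face together with the mass on near-zero coordinates. Applying this to the $D+K$ independent Dirichlet factors delivers $-\log P_\pi[\cdot]\lesssim\sum_d e_d\log(K/\varepsilon_{\gamma_d})$ and $-\log P_\mu[\cdot]\lesssim\sum_k d_k\log(V/\varepsilon_{\beta_k})$, which, after the $(DN(1-\alpha))^{-1}$ scaling, produces exactly the prior-mass term in the first displayed inequality of the corollary. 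The exceptional-set probability, obtained by a Chebyshev-type argument as in Theorem~\ref{thm:OI_mixture}, is precisely $C/(N\sum_d\varepsilon_{\gamma_d}^2+N\sum_k\varepsilon_{\beta_k}^2)$.

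For the Hellinger conclusion, balance the two sides of each summand by taking $\varepsilon_{\gamma_d}^2\asymp e_d N^{-1}\log(DKN)$ and $\varepsilon_{\beta_k}^2\asymp d_k N^{-1}\log(KVN)$; under the hypothesis $(\sum_d e_d+\sum_k d_k)/(DN)\to 0$ the exceptional probability vanishes, and converting $D_\alpha\ge D_{1/2}\ge h^2$ (together with a symmetric manipulation in $\alpha$ and $1-\alpha$ to produce $\min\{\alpha,1-\alpha\}$) yields the stated rate. The principal obstacle is the sparse-Dirichlet prior mass estimate at the sub-polynomial concentration parameter $\eta_\gamma=1/K^c$ and $\eta_\beta=1/V^c$, and in particular carrying it out uniformly in the sparsity levels $e_d,d_k$ so that the resulting logarithmic factors assemble into the clean $\log(DKN)$ and $\log(KVN)$ displayed in the bound.
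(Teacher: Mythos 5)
Your proposal is correct and follows essentially the same route as the paper: reduce to Corollary~\ref{coro:main} (extended to $D$ independent latent variables per observation), choose $q_\theta$ as the prior restricted to a product of KL boxes and $\qtil_{S_n}$ as the oracle conditional so that the risk bound splits into per-document/per-topic terms controlled via the Chebyshev argument of Theorem~\ref{thm:OI_mixture}, and invoke the sparse-Dirichlet prior mass bound of \cite{yang2014minimax} to get $-\log P[\m B(\gamma_d^\ast,\varepsilon_{\gamma_d})]\lesssim e_d\log(K/\varepsilon_{\gamma_d})$ and its analogue for $\beta_k$. The final balancing $\varepsilon_{\gamma_d}^2\asymp e_d N^{-1}\log(DKN)$, $\varepsilon_{\beta_k}^2\asymp d_k N^{-1}\log(KVN)$ and the conversion $D_\alpha\ge h^2$ also match the paper's argument.
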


Corollary~\ref{coro:LDA} implies estimation consistency as long as the ``effective" dimensionality $\sum_{d=1}^De_d+\sum_{k=1}^K d_k$ of the model is $o(DN)$ as the ``effective sample size" $DN\to\infty$.
In addition, the upper bound depends only logarithmically on the vocabulary size $V$ due to the sparsity assumption.

\section{Discussion}

The primary motivation behind this work is to investigate whether point estimates obtained from mean-field or other variational approximations to a Bayesian posterior enjoy the same statistical accuracy as those obtained from the true posterior, and we answer the question in the affirmative for a wide range of statistical models. To that end, we have analyzed a class of variational objective functions indexed by a temperature parameter $\alpha \in (0, 1]$, with $\alpha = 1$ corresponding to the usual VB, and obtained risk bounds for the variational solution which can be used to show (near) minimax optimality of variational point estimates. Our theory was applied to a number of examples, including the mean-field approximation to Bayesian linear regression with and without variable selection, Gaussian mixture models, latent Dirichlet allocation, and (mixture of) Gaussian variational approximation in regular parameter models. This broader class of objective functions can be fitted in practice with no additional difficulty compared to the usual VB. Hence, the proposed framework leads to a class of efficient variational algorithms with statistical guarantees. 

The theory for the $\alpha < 1$ and the $\alpha = 1$ (usual VB) case lead to interesting contrasts. For $\alpha < 1$, a prior mass condition suffices to establish the risk bounds for the Hellinger (and more generally, R{\'e}nyi divergences). However, the $\alpha = 1$ case requires additional conditions to be verified. When all conditions are met, there is no difference in terms of the rate of convergence for $\alpha < 1$ versus $\alpha = 1$. Hence, from a practical standpoint, the procedure with $\alpha < 1$ leads to theoretical guarantees with verification of fewer conditions. A comparison of second-order properties is left as a topic for future research, as is extension to models with dependent latent variates.

\newpage
\appendix
\section{Convention}
As a convention, all equations defined in this supplementary document are numbered (S1), (S2), \ldots, while equations cited from the main document retain their numbering in the main document. Similar for theorems, corollaries, lemmas etc. 

In \S~S2, we provide an empirical study to compare the $\alpha$-VB approach for $\alpha < 1$ to the the usual VB in some of the models discussed in \S4. In \S~S3, we illustrate applying our theory for continuous latent variable models. \S~S4 contains the Gaussian approximation example whose details were skipped in \S4 of the main document. \S~S5 provides proofs of all theoretical results. 

\section{Numerical Examples}
In this section, we illustrate the $\alpha$-VB procedure through several representative simulation examples. Since the objective functions $\Psi(q_{\theta})$ and $\Psi(q_{\theta}, q_{S^n})$ differ from usual VB only through the presence of $\alpha$, standard coordinate ascent variational inference (CAVI) algorithms\cite{bishop2006pattern,blei2017variational} can be implemented with simple modifications in the iterative updates. We implemented $\alpha$-VB with different choices of $\alpha$ between $0.5$ and $1$ and the point estimates were fairly robust to the choice of $\alpha$. 

\subsection{Bayesian high-dimensional linear regression}
Consider sparsity inducing hierarchical prior $p_{\beta,\,z}$ over $(\beta,\,z)$ as $\prod_{j=1}^d p_{\beta_j, z_j}$ with 
\begin{eqnarray*}
p_{\beta_j,\,1}  = \m N (\beta_j; 0, \nu_1\sigma^2), \quad p_{\beta_j,\,0}  = \delta_0(\beta_j); \quad z_j \sim \mbox{Bernoulli}(1/d),
\end{eqnarray*}
where $\delta_0$ denotes the point mass measure at $0$.
Apply the variational  approximation by using the family
\begin{align*}
q(\beta,\, \sigma,\, z) =q_\sigma(\sigma)\,\prod_{j=1}^d  q_{z_j,\beta_j}(z_j,\beta_j)
\end{align*}
where $q_{z_j,\beta_j}(z_j,\beta_j) = \prod_{j=1}^d N(\beta_j; \mu_j, \sigma_j^2)^{z_j} \delta_0(\beta_j)^{1-z_j}  [q_{z_j}(1)]^{z_j}[q_{z_j}(0)]^{1-z_j}$.  
Let $\phi_j =  q_{z_j}(1)$ for $j=1, \ldots, p$.  

An implementation of the $\alpha$-VB algorithm 
for Bayesian high-dimensional linear regression ($\alpha$-VB-HDR) is described in Algorithm \ref{algo2} and follows the batch-wise variational Bayes algorithm in Algorithm 2 of 
\cite{huang2016variational}.  
%Observe that due to the mean-field decomposition of $(\beta, z)$, the algorithm is somewhat different from existing algorithms \cite{carbonetto2012scalable,huang2016variational}, though there are similarities with batch-wise variational Bayes algorithm in Algorithm 2 of \cite{huang2016variational}. 
%In fact, updating $(\mu_1, \ldots, \mu_d)$, $(\sigma_1, \ldots, \sigma_d)$ and $(\phi_1, \ldots, \phi_d)$ as blocks improves the convergence of the algorithm especially when $d\gg 1$.  The derivation for the steps in Algorithm \ref{algo2} is standard and hence omitted. 
We sample $n=100$ observations from the linear regression model with $d=500$, with the entries of the covariate matrix ${\bf X}$ sampled independently from $N(0, 1.5^2)$, and error standard deviation $\sigma=1$.  The first $4$ coefficients are non-zero and are set equal to $(5,-4,-3,2)$. Figure \ref{fig:main2}
illustrates the performance of $\alpha$-VB-HDR for different values of $\alpha$.  In all the cases, the convergence of ELBO occurs within less than 20 iterates. 

%It is interesting to observe that performance is moderately robust to $\alpha$. 
\begin{algorithm}[htp!]
  \caption{$\alpha$-VB-HDR}\label{algo2}
  Set $\tilde{\sigma}= \sigma/ \sqrt{\alpha}$. \\
  Initialize $(\mu_1, \ldots, \mu_d), (\sigma_1^2, \ldots, \sigma_d^2)$, $(\phi_1,\ldots, \phi_d) = (1, \ldots, 1)'$ and $\Phi=\mbox{Diag}(\phi_1,\ldots, \phi_d)$. \\
   \While{ELBO does not converge}{
  $(\mu_1, \ldots, \mu_d)' =  ({\bf X}'{\bf X} + \Phi/\nu_1)^{-1}{\bf X}'{\bf Y}$\\
     \For{$j=1, \ldots, d$}
     {\begin{eqnarray*}
     \frac{1}{2\sigma_j^2} &=&\frac{\mbox{Diag}({\bf X}'{\bf X})_j}{2\tilde{\sigma}^2} + \frac{\phi_j}{2\nu_1\tilde{\sigma}^2} \\
       \phi_j &=&  \mathrm{Logit}^{-1}\bigg\{ \mathrm{Logit}(1/d) + \frac{1}{2}\log \bigg(\frac{\sigma_j^2}{\nu_1\tilde{\sigma}^2}\bigg) + \frac{\mu_j^2}{2\sigma_j^2} \bigg\}
       \end{eqnarray*}
    }
    }
  
 \end{algorithm}
 \begin{figure}


  \begin{center}
 \begin{tabular}{|c|c|}

      \hline
      % after \\: \hline or \cline{col1-col2} \cline{col3-col4} ...
      \includegraphics[width=60mm]{fig5} & \includegraphics[width=60mm]{fig6} \\
      $\alpha = 0.5$ & $\alpha = 0.7$ \\ \hline
            \includegraphics[width=60mm]{fig7} & \includegraphics[width=60mm]{fig8} \\ 
             $\alpha = 0.95$ & $\alpha = 1$ \\ \hline
%        {\small ''iteraciones máximas de BT''=20}   \\
%      {\small ''Periodo de Tenencia en Lista Tabú''=2}  \\
   %   \hline   
\end{tabular}
   \end{center}
   \caption{Plot of the $500$ coefficients;  true $\beta$ in  red and estimated means of $\beta$ using $\alpha$-VB-HDR in black.}
   \label{fig:main2}
   \end{figure}

%  \begin{figure}[htp!]
%        \centering
%        \begin{subfigure}[b]{0.30\textwidth}
%            \centering
%            \includegraphics[width=\textwidth]{fig/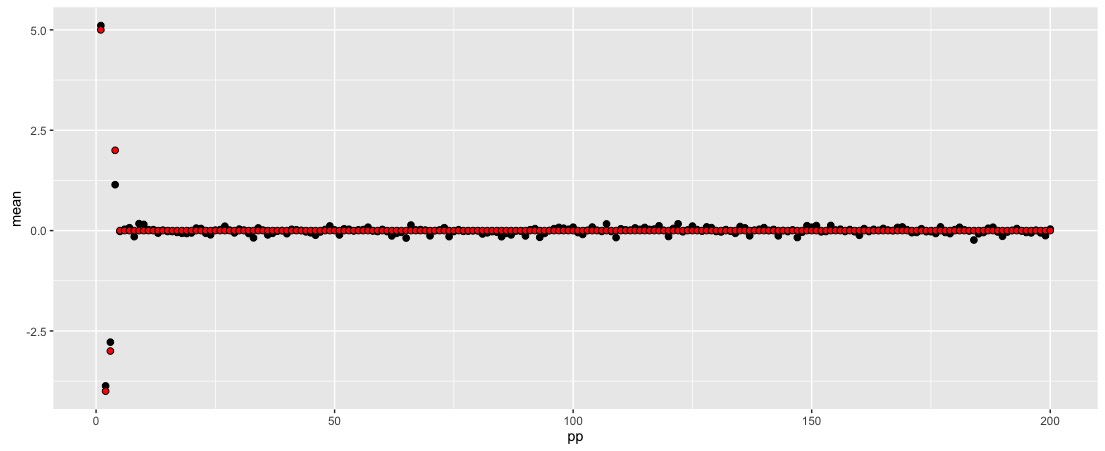}
%            \caption[]%
%            {{\small $\alpha = 0.5$}}    
%            \label{fig:1}
%        \end{subfigure}
%        \hfill
%        \begin{subfigure}[b]{0.30\textwidth}  
%            \centering 
%            \includegraphics[width=\textwidth]{fig/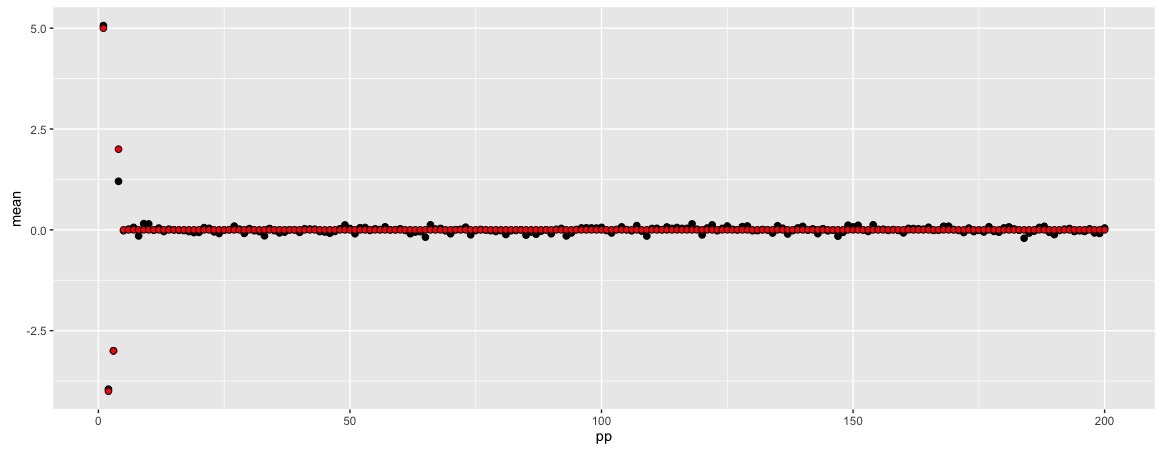}
%            \caption[]%
%            {$\alpha =0.7$}    
%            \label{fig:2}
%        \end{subfigure}
%        \vskip\baselineskip
%        \begin{subfigure}[b]{0.30\textwidth}   
%            \centering 
%            \includegraphics[width=\textwidth]{fig/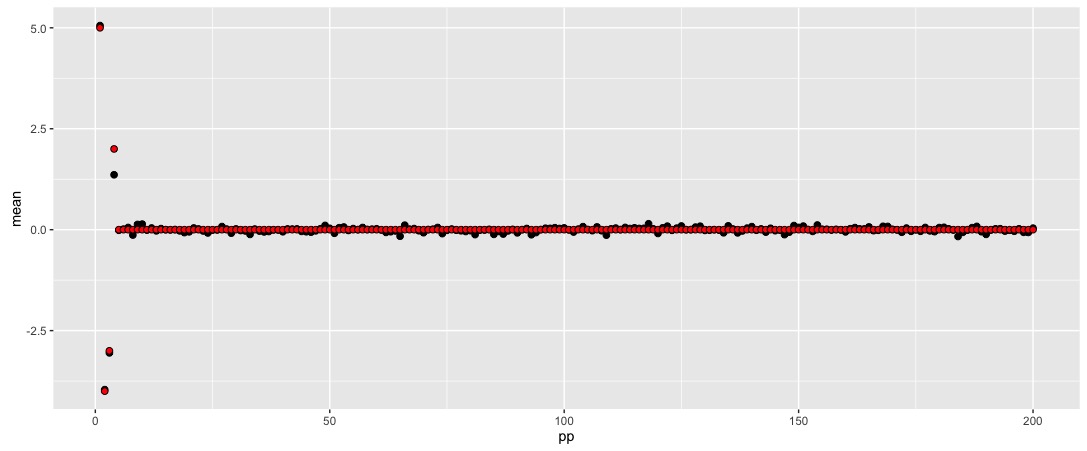}
%            \caption[]%
%            {$\alpha =0.95$}    
%            \label{fig:3}
%        \end{subfigure}
%        \quad
%        \begin{subfigure}[b]{0.30\textwidth}   
%            \centering 
%            \includegraphics[width=\textwidth]{fig/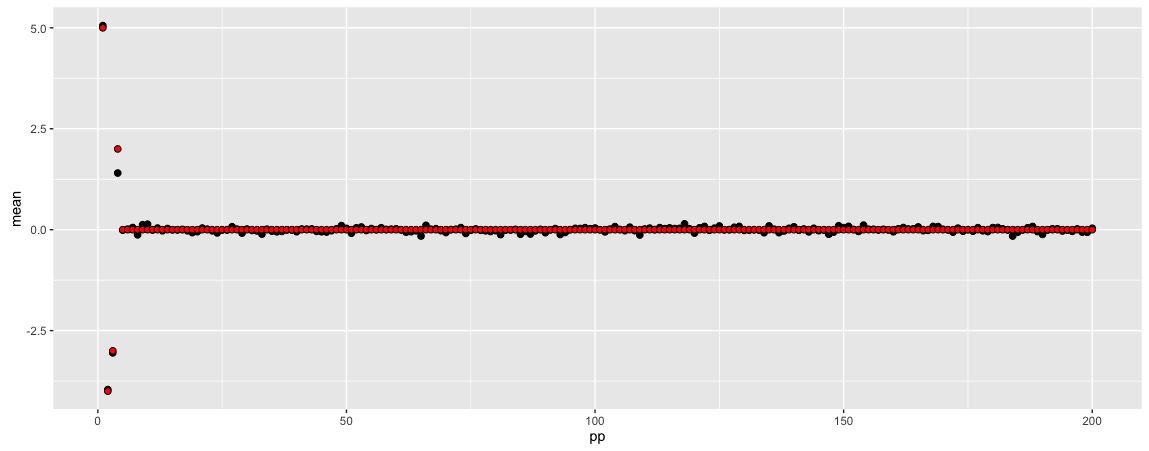}
%            \caption[]%
%            {$\alpha =1$}   
%            \label{fig:4}
%        \end{subfigure}
%        \caption[ ]
%        {\small Plot of the $200$ coefficients;  true $\beta$ in  red and estimated means of $\beta$ using $\alpha$-VB-HDR in black.  } 
%        \label{fig:main2}
%    \end{figure}

\subsection{Gaussian mixture models}
We sample $n=1000$ bi-variate observations from 
\begin{align*}
Y \sim \sum_{k=1}^K \pi_k \, \m N(\mu_k,\,  I_2),
\end{align*}
for  $\pi_k =1/3$, $k=1, \ldots, K=3$.  $\mu_k$ are drawn from $\mbox{N}_2(0, 50I_2)$ for $k=1,\ldots, 3$.  Let $\pi(\mu_k) =  \mbox{N}_2(\mu_0, \sigma_0^2 I_2)$.  We use $\mu_0=(0, 0)'$ and $\sigma_0^2 = 50$.   For simplicity, we assume 
$\pi_k$ to be known in the study.  We apply the mean field approximation by using the family of density functions of the form
\begin{align*}
q(\mu, S^n) = q_\mu(\mu)\, q_{S^n}(s^n) = q_\mu(\mu)\, \prod_{i=1}^n q_{S_i}(s_i)
\end{align*}
Following \cite{blei2017variational}, we develop $\alpha$-VB algorithm for Gaussian mixture models ($\alpha$-VB-GMM), described in Algorithm \ref{algo1}. The derivation follows very closely to the case when $\alpha =1$ and hence the details are omitted. 
Numerical results are shown in Figure \ref{fig:main}.  In all the cases, the convergence of ELBO occurs within less than 10 iterates. 
 It is evident that for $\alpha$ close to $1$, $\alpha$-VB-GMM can recover the true density almost perfectly. 
 
\begin{algorithm}[htp!]
  \caption{$\alpha$-VB-GMM}\label{algo1}
  Initialize $\tilde{\mu}_k, \tilde{\sigma}_k, k=1,\ldots, K$ and $s_i, i=1, \ldots, n$. \\
  \While{\text{ELBO does not converge}}{
  \For{$i=1, \ldots, n$}
  { $q_{S_i}(s_i ) \propto \exp\{\alpha \log \pi_{s_i} + \alpha y_{s_i}E(\mu_{s_i}) - \alpha E(\mu_{s_i}^2/2)\}$} 

   \For{$k=1, \ldots, K$}
   {Update 
  \begin{eqnarray*}
  \tilde{\mu}_k =  \frac{\mu_0/\sigma_0^2 + \sum_{i=1}^kq_{S_i}(s_i=k) y_{s_i}}{1/\sigma_0^2 +  (1/\alpha)\sum_{i=1}^k q_{S_i}(s_i=k)}, \quad \tilde{\sigma}_k^2  =
   \frac{1}{1/\sigma_0^2 + (1/\alpha)\sum_{i=1}^k q_{S_i}(s_i=k)}  
  \end{eqnarray*}
  Set $q_{\mu_k} = \mbox{N}_2 (\mu_k; \tilde{\mu}_k,  \tilde{\sigma}_k^2 I_2)$}}
 
\end{algorithm}

    \begin{figure}[htp!]
        \begin{subfigure}[b]{0.475\textwidth}
            \centering
            \includegraphics[width=\textwidth]{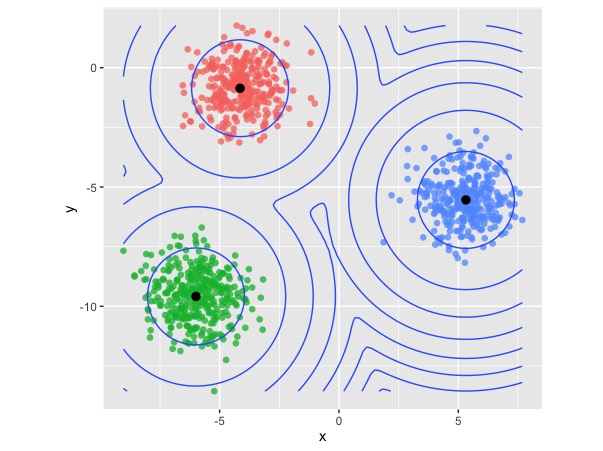}
            \caption[Network2]%
            {{\small True mixture density}}    
            \label{fig:1b}
        \end{subfigure}
        \hfill
        \begin{subfigure}[b]{0.475\textwidth}  
            \centering 
            \includegraphics[width=\textwidth]{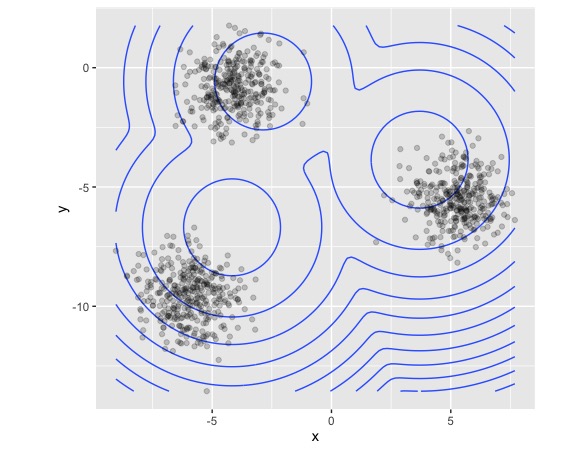}
            \caption[]%
            {$\alpha =0.7$}    
            \label{fig:2b}
        \end{subfigure}
        \vskip\baselineskip
        \begin{subfigure}[b]{0.475\textwidth}   
            \centering 
            \includegraphics[width=\textwidth]{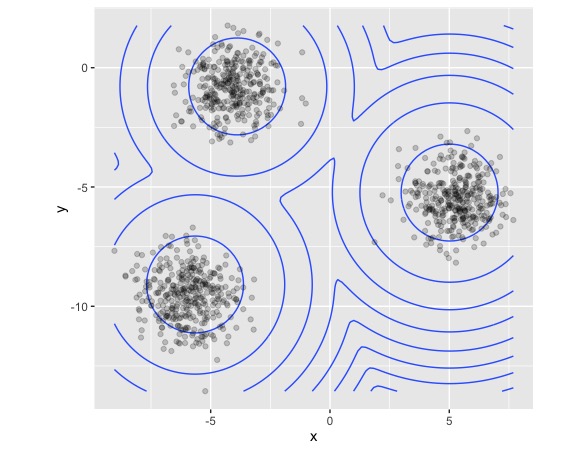}
            \caption[]%
            {$\alpha =0.95$}    
            \label{fig:3b}
        \end{subfigure}
        \quad
        \begin{subfigure}[b]{0.475\textwidth}   
            \centering 
            \includegraphics[width=\textwidth]{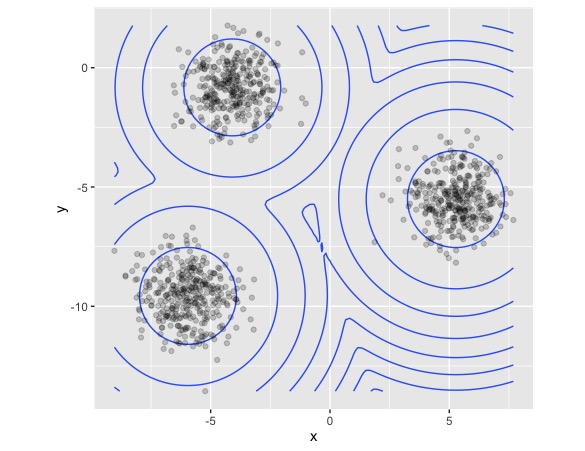}
            \caption[]%
            {$\alpha =1$}   
            \label{fig:4b}
        \end{subfigure}
        \caption[ ]
        {\small Contour plots for the true and predicted density using $\alpha$-VB-GMM. The colors in Figure \ref{fig:1b} represent different cluster components. } 
        \label{fig:main}
    \end{figure}

\subsection{Latent Dirichlet Allocation}
We implemented a version of LDA which is exactly same as Section 5.2 of \cite{blei2003latent}.  The approach is the same as the one described here with one minor difference.  The parameter $\eta_\gamma$  is set to $1/K$, but $\eta_\beta$ is estimated using an empirical Bayes approach described in Section 5.3 of \cite{blei2003latent} instead of fixing it to be $1/V$. To implement $\alpha$-VB, we note that the only change required will be to Equation (6) of Section 5.2 where we replace $\phi_{ni} \propto \beta_{iw_n} \exp \{E_q[\log \theta_i | \gamma] \}$ to $\phi_{ni} \propto \beta_{iw_n} \exp \{\alpha E_q[\log \theta_i | \gamma] \}$.   We provide an illustrative example of the use of an LDA model on a real data comprising of the first 5 out of 16,000 documents from a subset of the TREC AP corpus \cite{harmonoverview}. The maximum number of topics is set to $10$.    The top words from some of the resulting multinomial distributions $p(w|z)$ are illustrated in Table  \ref{tab:lda}. 
The distributions seem to capture some of the underlying topics in the corpus with decreasing word similarity  as $\alpha$ decreases.
\begin{table}[htp]
\tiny 
\caption{Top 5 words for each of the 10 extracted topics for $\alpha = 0.5, 0.7, 0.95, 1$ with the top 5 rows corresponding to $\alpha = 0.5$, next 5 rows corresponding to $\alpha = 0.7$ and so on.}
\scriptsize 
\begin{center}
\resizebox{\textwidth}{!}{%
\begin{tabular}{|cccccccccc|}
\hline 
T1 & T2 & T3 & T4 & T5 & T6 & T7 & T8 & T9 & T10 \\ \hline
	history & police & year & peres & liberace & school & classroom & i & peres & first \\ 
	ago & shot & people & israel & back & teacher & teacher & mrs & official & year \\
	york & gun & get & bechtel & mrs & guns & boy & jewelry & rappaport & minister \\ 
	president & students & first & offer & museum & boys & shot & museum & pipeline & new \\ 
	todays & door & just & memo & man & saturday & baptist & bloomberg & offer & invasion \\\hline
		history & police & year & peres & liberace & school & shot & i & peres & year \\ 
	president & students & people & offer & mrs & teacher & classroom & police & official & first \\ 
	ago & school & get & bechtel & bloomberg & guns & teacher & mrs & rappaport & new \\
	york & gun & volunteers & memo & back & shot & baptist & museum & pipeline & invasion \\
	todays & yearold & mail & israel & door & boys & marino & jewelry & offer & minister \\ \hline
	history & school & first & memo & liberace & first & shot & mrs & peres & people \\ 
	president & police & year & effect & door & year & baptist & i & offer & get \\ 
	ago & teacher & just & wage & back & day & marino & police & official & year \\ 
		first & students & died & quoted & mrs & died & teacher & museum & rappaport & thompson \\ 
	year & boys & day & bechtel & bloomberg & people & kids & bloomberg & bechtel & program \\ \hline
	ago & police & get & memo & liberace & teacher & shot & i & peres & people \\
	president & school & volunteers & bechtel & mrs & school & police & police & official & year \\ 
	history & students & year & peres & bloomberg & shot & baptist & mrs & offer & thompson \\ 
	first & teacher & mail & offer & back & guns & teacher & museum & rappaport & program \\ 
	year & boys & people & israel & door & students & classroom & jewelry & pipeline & get \\ \hline
\end{tabular}}
\end{center}
\label{tab:lda}
\end{table}%

\section{Extension to continuous latent variables}

As discussed in the main draft, we extend results on mean field approximations to models from discrete latent variables to continuous latent variables. For simplicity, we only focus on the $\alpha < 1$ case. 
All the proofs proceed in a similar way---the only difference is replacing all sums over latent variables with integrations. Specifically, in the definition of $\Psi_{\alpha}$ in \eqref{eq:VBapp1}, the only change takes place in the quantity $\Delta_J$, where the approximation to the likelihood is now made with continuous latent variables. 
We present a version where i.i.d.~copies $T^n=(T_1,\ldots,T_n)$ of the latent variable $T\in \m T$ are continuous and there is no restrictions on the variational factor $q_{T_i}$ for each latent variable $T_i$.  
%in the following $\alpha$-mean field variational approximation
In this setting, the $\alpha$-VB objective function is simplified to 
\begin{equation}\label{Eqn:MF_MLV_con}
\begin{aligned}
&\Psi_{\alpha}(q_\theta, q_{T^n}) = \\
&\qquad- \int_\Theta q_\theta(\theta)\,\sum_{i=1}^n \int_{\m T} q_{T_i}(t_i)\, \log\frac{p(Y_i\,|\,\mu,t_i) \, p_{T_i}(t_i\,|\,\pi)}{p(Y_i\,|\, \theta^\ast)\,q_{T_i}(t_i)}\,d t_i\, d\theta + \alpha^{-1} D(q_\theta\, ||\, p_\theta),
\end{aligned}
\end{equation}
%\begin{equation}\label{Eqn:MF_MLV_con}
%\begin{aligned}
%(\qhat_\theta,\,\qhat_{T^n}) &= \argmin_{q_\theta,\, q_{T^n}=\prod_{i=1}^n q_{T_i}} \bigg\{-\int_\Theta q_\theta(\theta) \,\int_{\m T^n} q_{T^n}(t^n)\, \log\frac{p^\alpha(Y^n\,|\,\mu,t^n) \, p^\alpha_{T_i}(t_i\,|\,\pi)\, p_\theta(\theta)}{q_\theta(\theta)\,q_{T^n}^\alpha(t^n)} \, dt^n\,d\theta \bigg\}\\
%&= \argmin_{q_\theta,\,q_{T^n}=\prod_{i=1}^n q_{T_i}} \bigg\{ -\alpha\, \int_\Theta q_\theta(\theta) \,\int_{\m T^n} q_{T^n}(t^n)\, \log\frac{p(Y^n\,|\,\mu,t^n) \, p_{T_i}(t_i\,|\,\pi)}{p(Y^n\,|\, \theta^\ast)\,q_{T^n}(t^n)}\, dt^n\,d\theta + D(q_\theta\, ||\, p_\theta)\bigg\}\\
%&=\argmin_{q_\theta,\,q_{T_1},\ldots,q_{T_n}} \bigg\{ -\alpha\, \int_\Theta q_\theta(\theta)\,\sum_{i=1}^n \int_{\m T} q_{T_i}(t_i)\, \log\frac{p(Y_i\,|\,\mu,t_i) \, p_{T_i}(t_i\,|\,\pi)}{p(Y_i\,|\, \theta^\ast)\,q_{T_i}(t_i)}\,d t_i\, d\theta + D(q_\theta\, ||\, p_\theta)\bigg\},
%\end{aligned}
%\end{equation}
where we assume that the distribution family $p_T(\cdot\,|\, \pi)$ for the latent variable is indexed by its own parameter $\pi$, and recall that $\mu$ is the parameter in the likelihood function $p(Y\,|\,\mu,\,T)$ of response $Y$ given the latent variable $T$, and $\theta=(\pi,\,\mu)$ are the parameters. 

Similar to the discrete case, for continuous latent variables, we define the following two KL neighborhoods of $\pi^\ast$ and $\mu^\ast$
\begin{align*}
\m B^{con}_n(\pi^\ast,\,\varepsilon_\pi) &= \Big\{D\big[p_T(\cdot\,|\,\pi^\ast)\,||\, p_T(\cdot\,|\,\pi)\big] \leq \varepsilon_\pi^2,\ \ V\big[p_T(\cdot\,|\,\pi^\ast)\,||\, p_T(\cdot\,|\,\pi)\big] \leq \varepsilon_\pi^2\Big\},\\
\m B^{con}_n(\mu^\ast,\,\varepsilon_\mu) &=  \Big\{\sup_{t}D\big[ p(\cdot\,|\,\mu^\ast,t)\, \big|\big| \, p(\cdot\,|\,\mu,t)\big]\leq \varepsilon_\mu^2,\\
&\qquad\qquad\qquad\qquad\qquad\qquad \sup_t V\big[ p(\cdot\,|\,\mu^\ast,t)\, \big|\big| \, p(\cdot\,|\,\mu,t)\big] \leq \varepsilon_\mu^2\Big\}.
\end{align*}
We now state a theorem with the same combined conclusions of Corollary \ref{coro:main} and Theorem \ref{thm:OI_mixture} for the continuous case. The proof is similar and hence omitted. 
\begin{theorem}\label{thm:OI_mixture_cont}
For any measure $q_\theta$ over $\theta$ satisfying $q_\theta\ll p_\theta$, it holds with probability at least $(1-\zeta)$ that
\begin{equation}\label{Eqn:key_var_cont}
\begin{aligned}
& \int \Big\{D_{\alpha}\big[p(\cdot\,|\,\theta)\,\big|\big|\, p(\cdot\,|\, \theta^\ast)\big] \Big\}\,\qhat_{\theta,\alpha}(\theta)\,d\theta\\
& \leq -\frac{\alpha}{n(1 - \alpha)} \int_\Theta  q_\theta(\theta)\, \sum_{i=1}^n \bigg\{\int_{\m T} \qtil_{T_i}(t_i)\,  \log\frac{p(Y_i\,|\,\mu,t_i) \, p_{T_i}(t_i\,|\,\pi)}{p(Y_i\,|\,\mu^\ast,t_i) \, p_{T_i}(t_i\,|\,\pi^\ast)}\, dt_i \bigg\}\, d\theta\\
&\qquad\qquad\qquad\qquad\qquad\qquad\qquad+ \frac{1}{n(1-\alpha)}\,D(q_\theta\, ||\, p_\theta)  + \frac{1}{n(1-\alpha)}  \log(1/\zeta),
\end{aligned}
\end{equation}
where $\qtil_{T_i}$ is a probability distribution over $\m T$ satisfying
\begin{align}\label{Eqn:q_T}
\qtil_{T_i}(t_i)= \frac{p_{T_i}(t_i\,|\,\pi^\ast)\, p(Y_i\,|\,\mu^\ast, \,t_i)}{p(Y_i\,|\,\theta^\ast)},\quad t_i\in\m T.
\end{align}
Moreover, for any fixed $(\varepsilon_\pi,\,\varepsilon_\mu) \in (0, 1)^2$, with $\bbP_{\theta^\ast}$ probability at least $1 - 5/\{(D-1)^2 \,n \, (\varepsilon_\pi^2+\varepsilon_\mu^2)\}$, it holds that
\begin{equation}\label{eq:OI_mixture_cont}
\begin{aligned}
 &\int \Big\{D_{\alpha}\big[p(\cdot\,|\,\theta)\,\big|\big|\, p(\cdot\,|\, \theta^\ast)\big] \Big\}\,\qhat_{\theta,\alpha}(\theta)\,d\theta \le  \frac{D\, \alpha}{1-\alpha} \, (\varepsilon_\pi^2+\varepsilon_\mu^2)\\
 &\   + \Big\{ - \frac{1}{n(1-\alpha)} \log P_\pi\big[\m B^{con}_n(\pi^\ast,\,\varepsilon_\pi)\big] \Big\} 
+ \Big\{ - \frac{1}{n(1-\alpha)} \log P_\mu\big[B^{con}_n(\mu^\ast,\,\varepsilon_\mu) \big] \Big\}.
\end{aligned}
\end{equation}
\end{theorem}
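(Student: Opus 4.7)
\medskip

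\noindent\textbf{Proof plan for Theorem \ref{thm:OI_mixture_cont}.}

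The plan is to mirror the proofs of Theorem~\ref{thm:main}, Corollary~\ref{coro:main}, and Theorem~\ref{thm:OI_mixture}, replacing every sum $\sum_{s^n}$ over discrete latent variables with an integral $\int_{\m T^n}$ over continuous ones. The discrete structure is not used substantively anywhere: it enters only through Jensen's inequality applied to $\log p(Y_i\,|\,\theta)=\log \int q_{T_i}(t_i) \cdot p(Y_i\,|\,\mu,t_i) p_{T_i}(t_i\,|\,\pi)/q_{T_i}(t_i)\, dt_i$, which is equally valid for integrals. Consequently the decomposition \eqref{eq:KL_decomp1} of the KL divergence and the definition \eqref{eq:VBapp1} of $\Psi_{n,\alpha}$ carry over verbatim, now with $\Delta_J(q_\theta,q_{T^n})$ expressed via integrals over $\m T^n$.

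First I would establish the continuous analog of Theorem~\ref{thm:main}. The proof of Theorem~\ref{thm:main} (in the supplement) rests on a Donsker--Varadhan-type variational representation of the KL divergence combined with a Markov bound on $\int e^{-(1-\alpha)\,\ell_n(\theta,\theta^\ast)+\ldots}$. None of these steps depends on $S^n$ being discrete. Hence for any $(q_\theta,q_{T^n})\in\Gamma$ with $q_\theta\ll p_\theta$,
\begin{align*}
\int \frac{1}{n}D^{(n)}_\alpha(\theta,\theta^\ast)\,\widehat{q}_{\theta,\alpha}(d\theta) \le \frac{\alpha}{n(1-\alpha)}\,\Psi_{n,\alpha}(q_\theta,q_{T^n}) + \frac{1}{n(1-\alpha)}\log(1/\zeta)
\end{align*}
holds with $\mb P_{\theta^\ast}^{(n)}$-probability at least $1-\zeta$. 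Specializing $q_{T^n}=\widetilde{q}_{T^n}$ defined in \eqref{Eqn:q_T} and observing, as in Corollary~\ref{coro:main}, that
\begin{align*}
\Delta_J(q_\theta,\widetilde{q}_{T^n}) = -\int_\Theta q_\theta(\theta)\sum_{i=1}^n \int_{\m T} \widetilde{q}_{T_i}(t_i)\log\frac{p(Y_i\,|\,\mu,t_i)p_{T_i}(t_i\,|\,\pi)}{p(Y_i\,|\,\mu^\ast,t_i)p_{T_i}(t_i\,|\,\pi^\ast)}\,dt_i\,d\theta + \int \ell_n(\theta,\theta^\ast)q_\theta(d\theta),
\end{align*}
so that the $-\int \ell_n(\theta,\theta^\ast)q_\theta(d\theta)$ piece in $\Psi_{n,\alpha}$ cancels, yields \eqref{Eqn:key_var_cont}.

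Next, for \eqref{eq:OI_mixture_cont} I would copy the strategy of Theorem~\ref{thm:OI_mixture}: take $q_\theta$ to be the restriction of the product prior $p_\mu\otimes p_\pi$ to $A:=\m B^{con}_n(\mu^\ast,\varepsilon_\mu)\times \m B^{con}_n(\pi^\ast,\varepsilon_\pi)$. Then $D(q_\theta\,||\,p_\theta) = -\log P_\mu[\m B^{con}_n(\mu^\ast,\varepsilon_\mu)] - \log P_\pi[\m B^{con}_n(\pi^\ast,\varepsilon_\pi)]$, giving the last two terms of \eqref{eq:OI_mixture_cont}. The remaining term is the $\widetilde{q}_{T_i}$-expected log ratio. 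Splitting
\begin{align*}
\log\frac{p(Y_i\,|\,\mu,t_i)p_{T_i}(t_i\,|\,\pi)}{p(Y_i\,|\,\mu^\ast,t_i)p_{T_i}(t_i\,|\,\pi^\ast)} = \log\frac{p(Y_i\,|\,\mu,t_i)}{p(Y_i\,|\,\mu^\ast,t_i)} + \log\frac{p_{T_i}(t_i\,|\,\pi)}{p_{T_i}(t_i\,|\,\pi^\ast)},
\end{align*}
one observes that because $\widetilde{q}_{T_i}$ is the full conditional of $T_i$ given $Y_i$ under $\theta^\ast$, taking $\mb E_{\theta^\ast}$ of the integral over $t_i$ against $Y_i$ collapses to $-D\big[p(Y_i,T_i\,|\,\mu^\ast,\pi^\ast)\,||\,p(Y_i,T_i\,|\,\mu,\pi)\big]\le \varepsilon_\mu^2+\varepsilon_\pi^2$ uniformly over $(\mu,\pi)\in A$ by the definitions of $\m B^{con}_n$; the $V$-divergence controls its variance by $\varepsilon_\mu^2+\varepsilon_\pi^2$ in the same way. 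Summing over $i$ and applying Chebyshev's inequality to the centered sum supplies the probability bound $1-5/\{(D-1)^2 n(\varepsilon_\pi^2+\varepsilon_\mu^2)\}$ exactly as in the discrete case.

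The main obstacle is the last step: one needs the pair $(D,V)$ of neighborhood conditions to control both the mean and variance of the random quantity $\sum_i \int \widetilde{q}_{T_i}(t_i)\log[\cdot]dt_i$ uniformly over $(\mu,\pi)\in A$, and the interchange of $\mb E_{\theta^\ast}$ with the $t_i$-integration requires Fubini, which is routine under the integrability implied by the $V$-divergence bound. Once that is in hand, the remaining manipulations—bounding $D(q_\theta\,||\,p_\theta)$ by the log-prior-probabilities of the two neighborhoods and collecting the constants—are identical to the discrete case in Theorem~\ref{thm:OI_mixture}.
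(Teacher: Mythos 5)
Your proposal is correct and is essentially the paper's own argument: the paper explicitly omits this proof, noting only that it proceeds exactly as in Theorem~\ref{thm:main}, Corollary~\ref{coro:main}, and Theorem~\ref{thm:OI_mixture} with sums over latent states replaced by integrals over $\m T$, which is precisely the route you take (Jensen for $x\mapsto x^\alpha$, the variational lemma, the choice of $\qtil_{T_i}$ as the full conditional under $\theta^\ast$, restriction of the prior to the KL neighborhoods, and the Chebyshev argument via the $D$ and $V$ conditions). No gaps.
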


In presence of continuous latent variables, if the mean-field variational family is further constrained by restricting each factor $q_{T_i}$ corresponding to the latent variable $T_i$ to belong to a parametric family $\Gamma_{T_i}$, such as the exponential family, then the Bayes risk bound of Theorem~\ref{thm:OI_mixture_cont} still applies as long as the family $\Gamma_{T_i}$ for $q_{T_i}$ contains densities of form~\eqref{Eqn:q_T}---which is the case if the conditional distribution $p(T_i\,|\,\pi)$ also belongs to $\Gamma_{T_i}$ and the model $p(Y_i\,|\,\mu,\,\T_i)$ is conjugate with respect to family $\Gamma_{T_i}$.

\section{Gaussian approximation to regular parametric models} 
We discuss the details of this example from \S4 which were skipped in the main document. For sake of completeness, we remind the readers of the setting. 

Consider a family of regular parametric models $\m P=\{\mb P^{(n)}_{\theta}:\,\theta\in\Theta\}$ where $n$ is the sample size, and the likelihood function $p^{(n)}_{\theta}$ is indexed by a parameter $\theta$ belonging to the parameter space $\Theta\subset \mb R^d$, which we assume to be compact. %For simplicity, we assume that $\Theta$ is a compact subset of $\mb R^d$. 
Let $p_\theta$ denote the prior density of over $\Theta$, and $Y^n=(Y_1,\ldots,Y_n)$ be the observations from $\mb P^{(n)}_{\theta^\ast}$, with $\theta^\ast$ being the truth. We apply the Gaussian approximation by using the Gaussian family $\Gamma_G$(restricted to $\Theta$)
\begin{align*}
q(\theta) \propto \m N(\theta;\, \mu, \Sigma)\, I_\Theta(\theta),\quad \mu\in\mb R^d \ \mbox{and}\  \Sigma\mbox{ is a $d\times d$ positive definite matrix}. 
\end{align*}
The Gaussian variational approximation $\qhat_\theta$ as
\begin{align*}
\qhat_\theta:\,=\argmin_{q_\theta\in\Gamma_G}\bigg\{-\alpha\,\int_\Theta \int q_\theta(\theta)\,\log p^{(n)}_\theta(Y^n)\,d\theta  + D(q_\theta\,| |\, p_\theta)\bigg\}.
\end{align*}
we make the following assumption.

\vspace{1em}
\paragraph{{\bf Assumption P}: (prior thickness and regularity condition)}  The prior density $p_\theta$ satisfies $\inf_{\theta\in\Theta} p_\theta(\theta) >0$, and there exists some constant $C$ such that $D\big[p(\cdot\,|\,\theta_1)\,\big|\big|\,p(\cdot\,|\,\theta_2)\big] \leq C\,\|\theta_1-\theta_2\|^2$ and $V\big[p(\cdot\,|\,\theta_1)\,\big|\big|\,p(\cdot\,|\,\theta_2)\big] \leq C\,\|\theta_1-\theta_2\|^2$ holds for all $(\theta_1,\,\theta_2)\in\Theta^2$.

\begin{corollary}\label{coro:GApp}
Under Assumption P, it holds with probability tending to one as $n\to\infty$ that
\begin{align*}
& \bigg\{\int h^2\big[p(\cdot\,|\,\theta)\,\big|\big|\, p(\cdot\,|\, \theta^\ast)\big] \ \qhat_\theta(\theta)\,d\theta\bigg\}^{1/2} \lesssim \sqrt{\frac{d}{n\,\min\{\alpha,\,1-\alpha\}}\,\log (d\,n)}.
\end{align*}
\end{corollary}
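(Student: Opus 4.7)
The plan is to invoke Theorem \ref{thm:NoLatent}, which applies directly here since the Gaussian approximation to a regular parametric model has no latent variables ($W^n = \theta$). The theorem states that for any $\zeta \in (0,1)$, any test distribution $q_\theta \in \Gamma_G$ with $q_\theta \ll p_\theta$ satisfies, with $\mathbb{P}_{\theta^*}^{(n)}$-probability at least $1-\zeta$,
\begin{align*}
\int D_\alpha[p(\cdot|\theta)\,\|\,p(\cdot|\theta^*)]\,\widehat{q}_\theta(d\theta) \leq \frac{\alpha}{n(1-\alpha)}\Bigl[-\!\int_\Theta \log\tfrac{p(Y^n|\theta)}{p(Y^n|\theta^*)} q_\theta(d\theta) + \tfrac{D(q_\theta\,\|\,p_\theta)}{\alpha} + \tfrac{\log(1/\zeta)}{\alpha}\Bigr].
\end{align*}
The bulk of the work is to exhibit a $q_\theta^* \in \Gamma_G$ that makes this right-hand side of order $d\log(dn)/n$. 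My choice is the truncated Gaussian $q_\theta^*(\theta) \propto \mathcal{N}(\theta;\,\theta^*, \tau^2 I_d)\,I_\Theta(\theta)$ with $\tau^2 \asymp (nd)^{-1}$, which is concentrated near $\theta^*$ (assumed to lie in the interior of the compact set $\Theta$, so that the truncation cost is negligible for large $n$).

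Next I would bound the three terms on the right side separately. For the KL penalty, Assumption P gives $\inf_{\theta \in \Theta} p_\theta > 0$, so $-\int q_\theta^* \log p_\theta = O(1)$; the negative entropy of the (truncated) Gaussian is, up to $O(1)$ corrections from the truncation, $-(d/2)\log(2\pi e \tau^2)$, yielding $D(q_\theta^*\,\|\,p_\theta) \lesssim d\log n$. For the integrated log-likelihood ratio, Fubini gives
\begin{align*}
\mathbb{E}_{\theta^*}\Bigl[-\!\int_\Theta \log\tfrac{p(Y^n|\theta)}{p(Y^n|\theta^*)}\,q_\theta^*(d\theta)\Bigr] = n\!\int D[p(\cdot|\theta^*)\,\|\,p(\cdot|\theta)]\,q_\theta^*(d\theta) \leq Cn\tau^2 d \lesssim 1,
\end{align*}
using the quadratic KL bound in Assumption P and the second moment $\int\|\theta-\theta^*\|^2 q_\theta^*(d\theta)\lesssim \tau^2 d$. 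For the corresponding concentration, I would use the quadratic $V$-divergence bound to estimate the variance of the sum of i.i.d.~terms $Z_i = \int \log[p(Y_i|\theta)/p(Y_i|\theta^*)]\,q_\theta^*(d\theta)$ (by Jensen, $\mathrm{Var}(Z_i) \leq \int V[p_{\theta^*}\|p_\theta]\,q_\theta^*(d\theta) \lesssim \tau^2 d$), and apply Chebyshev's inequality.

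Setting $\zeta \asymp n^{-1}$, combining the three bounds gives
\begin{align*}
\int D_\alpha[p(\cdot|\theta)\,\|\,p(\cdot|\theta^*)] \,\widehat{q}_\theta(d\theta) \;\lesssim\; \frac{d\log(dn)}{n(1-\alpha)}.
\end{align*}
Finally, to convert to Hellinger I would invoke the R\'enyi inequality $h^2[p\,\|\,q] \lesssim D_\alpha[p\,\|\,q]/\alpha$ for $\alpha \in (0,1)$ (cf.~\cite{van2014renyi}), producing a factor $1/\alpha$, and then use $\alpha(1-\alpha) \asymp \min(\alpha,1-\alpha)$ for $\alpha \in (0,1)$ to arrive at the stated rate.

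The main obstacle will be the concentration step: one must carefully pass from expected log-likelihood bounds to high-probability ones, and the cleanest route relies explicitly on the $V$-divergence part of Assumption P (not only the KL part) together with Chebyshev, and must be coordinated with the $\log(1/\zeta)$ term already present in Theorem \ref{thm:NoLatent} so as not to lose logarithmic factors. A secondary technical nuisance is handling the boundary effects of truncating the Gaussian to $\Theta$, but since $\theta^*$ is assumed interior and $\tau^2 \to 0$, the truncation contributes only $O(1)$ to both $D(q_\theta^*\,\|\,p_\theta)$ and the normalization.
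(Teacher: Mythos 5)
Your proposal is correct and follows essentially the same route the paper uses for its parallel corollaries (the paper in fact omits an explicit proof of Corollary~\ref{coro:GApp}, but the proofs of Corollaries~\ref{coro:BLM} and \ref{coro:GM} use exactly this recipe): plug a variational density concentrated at $\theta^\ast$ into Theorem~\ref{thm:NoLatent}, bound the KL penalty by the negative entropy plus the prior lower bound, control the integrated log-likelihood ratio in expectation and variance via the quadratic $D$- and $V$-bounds of Assumption P together with Chebyshev, and convert R\'enyi to Hellinger via $\max\{1,\alpha/(1-\alpha)\}\,h^2 \le D_\alpha$. The only cosmetic difference is that you use a truncated Gaussian with $\tau^2\asymp (nd)^{-1}$ where the paper's analogues use a uniform restriction to a KL neighborhood of radius $\varepsilon=\sqrt{d/n}$ — a necessary substitution here since the variational family is Gaussian — and both choices yield the same $d\log(dn)/n$ rate.
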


Under the model identifiability condition $h^2\big[p(\cdot\,|\,\theta)\,\big|\big|\, p(\cdot\,|\, \theta^\ast)\big] \gtrsim \|\theta-\theta^\ast\|^2$, Corollary~\ref{coro:GApp} implies a convergence rate $\sqrt{n^{-1}\,d \log(dn)}$ for the variational-Bayes estimator $\widehat{\theta}_B$ of $\theta$.
By examining the proof of the corollary, we find that the normality form in the variational approximation does not play a critical role in the proof---similar Bayesian risk upper bounds hold under some additional conditions for a broader class of variational distributions as well, such as any location-scale distribution family with sub-exponential tails.
It is a well-known fact \cite{wang2005inadequacy,westling2015establishing} that the covariance matrices from the variational approximations are typically ``too small" compared with those for the sampling distribution of the maximum likelihood estimator, which combined with the Bernstein von-mises theorem implies that the variational approximation $\qhat_\theta$ may not converge to the true posterior distribution. 
This fact combined with the result in Corollary~\ref{coro:GApp} indicates: 1. minimizing the KL divergence over the variational family forces the variational distribution $\qhat_\theta$ to concentrate around the truth $\theta^\ast$ at the optimal rate (due to the heavy penalty on the tails in the KL divergence); 2. however, the local shape of $\qhat_\theta$ around $\theta^\ast$ can be far away from that of the true posterior due to dis-match between the distributions in the variational family and the true posterior.

\section{Proofs}\label{section:proof}
In this section, we present proofs of all technical results in the main document. 

\subsection{Proof of Theorem~\ref{thm:main}}
We first state a key variational lemma that plays a critical role in the proof. 
\begin{lemma}\label{lem:var}
Let $\mu_\theta$ be a probability measure over $\theta$ and $\mu_{S^n}$ be a probability measure over $S^n$, and $h(\theta,S^n)$ a measurable function such that for any fixed $S^n$, $e^{h(\cdot,S^n)} \in L_1(\mu_\theta)$. Then, 
\begin{align*}
\log \int \sum_{s^n} e^{h(\theta,s^n)}& \,\mu_{S^n}(s^n)\, \mu_\theta(d\theta) \\
&= \sup_{\rho_n(\theta,S^n)} \bigg[ \int \sum_{s^n} h(\theta,s^n)\, \rho_n(d\theta,s^n) - D(\rho_n(\theta,S^n) \,\vert \vert\, \mu_\theta\otimes\mu_{S^n}) \bigg],
\end{align*}
where the supremum is over all probability measures $\rho_n(\theta, S^n) \ll \mu_\theta\otimes\mu_{S^n}$. 
Further, the supremum on the right hand side is attained when 
$$
\frac{\rho_n(d\theta,s^n)}{\mu(d\theta)\,\mu_{S^n}(s^n)} = \frac{e^{h(\theta,S^n)}}{\int \sum_{s^n} e^{h(\theta,s^n)}\, \mu_{S^n}(s^n)\, \mu(d\theta)}.
$$
\end{lemma}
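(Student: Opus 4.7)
The plan is to establish this Donsker--Varadhan type identity via the non-negativity of the Kullback--Leibler divergence applied to a Gibbs-tilted reference measure. First, I would define the candidate optimizer
\begin{align*}
\rho^\ast(d\theta, s^n) \,:\!=\, \frac{e^{h(\theta, s^n)}}{Z}\, \mu_\theta(d\theta)\, \mu_{S^n}(s^n), \qquad Z \,:\!=\, \int \sum_{s^n} e^{h(\theta, s^n)}\, \mu_{S^n}(s^n)\, \mu_\theta(d\theta),
\end{align*}
which is a well-defined probability measure on $\Theta \times \mathrm{supp}(\mu_{S^n})$ when $Z < \infty$; the case $Z = \infty$ is trivial since both sides of the identity are $+\infty$ (choose $\rho_n$ supported where $h$ is large).

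Next, for any probability measure $\rho_n \ll \mu_\theta \otimes \mu_{S^n}$, I would compute the Radon--Nikodym derivative of $\rho_n$ with respect to $\rho^\ast$ and expand the KL divergence as
\begin{align*}
D(\rho_n \,\|\, \rho^\ast) \,=\, D(\rho_n \,\|\, \mu_\theta \otimes \mu_{S^n}) \,-\, \int \sum_{s^n} h(\theta, s^n)\, \rho_n(d\theta, s^n) \,+\, \log Z.
\end{align*}
Rearranging and invoking Gibbs' inequality $D(\rho_n \,\|\, \rho^\ast) \ge 0$ yields
\begin{align*}
\log Z \,\ge\, \int \sum_{s^n} h(\theta, s^n)\, \rho_n(d\theta, s^n) \,-\, D(\rho_n \,\|\, \mu_\theta \otimes \mu_{S^n}),
\end{align*}
which gives the ``$\ge$'' direction of the variational formula after taking the supremum over admissible $\rho_n$. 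For the matching ``$\le$'' direction, I would plug $\rho_n = \rho^\ast$ into the same display: since $D(\rho^\ast \,\|\, \rho^\ast) = 0$, the inequality is saturated, which simultaneously identifies the optimizer claimed in the last sentence of the lemma.

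The main obstacle is measure-theoretic bookkeeping rather than any substantive difficulty. I would need to verify that the KL computations are well-defined in the mixed continuous/discrete setting (the discrete sum over $s^n$ is handled exactly like an integral against the counting-weighted measure $\mu_{S^n}$), handle the edge case where $\int \sum_{s^n} h\, d\rho_n$ is $-\infty$ (in which case the bound is vacuous), and confirm that $\rho_n \ll \mu_\theta \otimes \mu_{S^n}$ implies $\rho_n \ll \rho^\ast$ on the set $\{h > -\infty\}$, which is automatic provided $e^{h} > 0$ a.e. Beyond these routine checks, the argument is the standard two-line rearrangement above.
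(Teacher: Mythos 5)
Your proposal is correct and is essentially the paper's argument: the paper simply cites the Donsker--Varadhan dual representation of the KL divergence (Corollary 4.15 of Boucheron--Lugosi--Massart) applied to the product measure $\mu_\theta\otimes\mu_{S^n}$, while you supply the standard proof of that representation via the tilted measure $\rho^\ast \propto e^{h}\,\mu_\theta\otimes\mu_{S^n}$ and non-negativity of $D(\rho_n\,\|\,\rho^\ast)$. Your edge-case bookkeeping (the $Z=\infty$ case and absolute continuity with respect to $\rho^\ast$) is a harmless elaboration of what the paper leaves implicit.
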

\begin{proof}
Use the well-known variational dual representation of the KL divergence (see, e.g., Corollary 4.15 of \cite{boucheron2013concentration}) which states that for any probability measure $\mu$ and any measurable function $h$ with $e^h \in L_1(\mu)$, one has 
$$
\log \int e^{h(\eta)} \mu(d\eta) = \sup_{\rho \ll \mu} \bigg[\int h(\eta) \rho(d\eta) - D(\rho\,\vert\vert\,\mu)\bigg],
$$
where the supremum is over all probability distributions $\rho \ll \mu$, and equality is attained when $d\rho/d\mu \propto e^h$. This fact simply follows upon an application of Jensen's inequality. In the current context, set $\eta = (\theta, s^n)$, $\mu = \mu_\theta \otimes \mu_{S^n}$ and $\rho(d\eta) = \rho_n(d\theta, s^n)$ to obtain the conclusion of Lemma \ref{lem:var}. 
\end{proof}

Return to the proof of the theorem. By applying Jensen's inequality to function $x \mapsto x^\alpha$ ($\alpha<1$), we obtain that, for any (possibly data dependent) measure $q_{S^n}$,
\begin{align*}
\bbE_{\theta^\ast} \bigg[ \sum_{s^n} q_{S^n}(s^n)&\, \exp\Big\{\alpha \,\log\frac{p(Y^n\,|\,\mu,s^n) \, \pi_{s^n}}{p(Y^n\,|\, \theta^\ast)\,q_{S^n}(s^n)}\Big\} \bigg] \\
 =&\, \int_{\mb R^n} \sum_{s^n} q_{S^n}(s^n)\, \bigg\{\frac{p(Y^n\,|\,\mu,s^n) \, \pi_{s^n}}{p(Y^n\,|\, \theta^\ast)\,q_{S^n}(s^n)}\bigg\}^\alpha \, p(Y^n\,|\,\theta^\ast) \,dY^n\\
\leq&\, \int_{\mb R^n}  \bigg\{\sum_{s^n} q_{S^n}(s^n)\,  \frac{p(Y^n\,|\,\mu,s^n) \, \pi_{s^n}}{p(Y^n\,|\, \theta^\ast)\,q_{S^n}(s^n)}\bigg\}^\alpha \, p(Y^n\,|\,\theta^\ast) \,dY^n\\
\le & \int \bigg\{ \frac{p(Y^n\,|\,\theta)}{p(Y^n\,|\,\theta^\ast)} \bigg\}^{\alpha}  \, p(Y^n\,|\,\theta^\ast) \,dY^n\\
=&\, e^{-(1-\alpha) \,D_{\alpha}^{(n)}(\theta, \theta^\ast)},
\end{align*}
with $D_{\alpha}^{(n)}(\theta, \theta^\ast)$ defined in the first display of \S3.1. 
Thus, for any $\zeta \in (0, 1)$, we have
\begin{align*}
\bbE_{\theta^\ast} \bigg[ \sum_{s^n} q_{S^n}(s^n) &\, \exp\Big\{\alpha \,\log\frac{p(Y^n\,|\,\mu,s^n) \, \pi_{s^n}}{p(Y^n\,|\, \theta^\ast)\,q_{S^n}(s^n)} \\
&+ (1 - \alpha) \, D_{\alpha}^{(n)}(\theta, \theta^\ast) - \log(1/\zeta) \Big\}\bigg] \le \zeta. 
\end{align*}
Integrating both side of this inequality with respect to $p_\theta$ and interchanging the integrals using Fubini's theorem, we obtain
\begin{align*}
\bbE_{\theta^\ast} \bigg[ \int_\Theta \sum_{s^n} p_\theta(\theta)\, q_{S^n}(s^n)&\, \exp\Big\{\alpha \,\log\frac{p(Y^n\,|\,\mu,s^n) \, \pi_{s^n}}{p(Y^n\,|\, \theta^\ast)\,q_{S^n}(s^n)} \\
&+ (1 - \alpha) \, D_{\alpha}^{(n)}(\theta, \theta^\ast) - \log(1/\zeta) \Big\} \, d\theta\bigg] \le \zeta. 
\end{align*}
Now, apply Lemma \ref{lem:var} with $\mu_\theta = p_\theta$, $\mu_{S^n} = q_{S^n}$ and 
$$
h(\theta, s^n) = \alpha \,\log\frac{p(Y^n\,|\,\mu,s^n) \, \pi_{s^n}}{p(Y^n\,|\, \theta^\ast)\,q_{S^n}(s^n)} + (1 - \alpha) \, D_{\alpha}^{(n)}(\theta, \theta^\ast) - \log(1/\zeta),
$$
to obtain that 
\begin{align*}
\bbE_{\theta^\ast}\exp \sup_{\rho(\theta,S^n)} \bigg[\int_\Theta \sum_{s^n}&\, \bigg\{ \alpha \, \log\frac{p(Y^n\,|\,\mu,s^n) \, \pi_{s^n}}{p(Y^n\,|\, \theta^\ast)\,q_{S^n}(s^n)} +  (1 - \alpha) \, D_{\alpha}^{(n)}(\theta, \theta^\ast)\\
&\qquad \qquad - \log(1/\zeta)\bigg\} \rho(d\theta,s^n) - D(\rho \,\vert \vert \,p_\theta \otimes q_{S^n})  \bigg] \le \zeta.
\end{align*}
If we choose $\rho =q_\theta\otimes q_{S^n}$ in the preceding display for any (possibly data dependent) $q_\theta \ll p_\theta$, then
\begin{align*}
\bbE_{\theta^\ast} \exp \bigg[\int_\Theta \sum_{s^n} \bigg\{ \alpha &\, \log\frac{p(Y^n\,|\,\mu,s^n) \, \pi_{s^n}}{p(Y^n\,|\, \theta^\ast)\,q_{S^n}(s^n)} +  (1 - \alpha) \, D_{\alpha}^{(n)}(\theta, \theta^\ast)\\
& \qquad \qquad\quad - \log(1/\zeta)\bigg\}\, q_\theta(d\theta)\, q_{S^n}(s^n) - D(q_\theta \,\vert \vert \,p_\theta)  \bigg] \le \zeta.
\end{align*}
By applying Markov's inequality, we further obtain that with $\bbP_{\theta^\ast}$ probability at least $(1 - \zeta)$, 
\begin{align*}%\label{eq:bd1}
&\,(1 - \alpha) \int  D_{\alpha}^{(n)}(\theta, \theta^\ast) \,q_\theta(d\theta)\\
&\qquad\le - \alpha  \int_\Theta\sum_{s^n} \bigg\{\log\frac{p(Y^n\,|\,\mu,s^n) \, \pi_{s^n}}{p(Y^n\,|\, \theta^\ast)\,q_{S^n}(s^n)}\bigg\} \, q_\theta(d\theta)\,q_{S^n}(s^n)+ D(q_\theta \,\vert \vert \,p_\theta)  + \log(1/\zeta) \\
&\qquad= \alpha \, \Psi_\alpha(q_\theta,q_{S^n}) + \log(1/\zeta),
\end{align*}
since, from \eqref{eq:KL_decomp} -- \eqref{eq:KL_decomp1} and \eqref{eq:VBapp1}, 
$$
\Psi_\alpha(q_\theta, q_{S^n}) = -\sum_{s^n} \bigg[q_{S^n}(s^n) \log\frac{p(Y^n\,|\,\mu,s^n) \, \pi_{s^n}}{p(Y^n\,|\, \theta^\ast)\,q_{S^n}(s^n)}\bigg] \, q_\theta(d\theta) + \alpha^{-1} D(q_\theta \,\vert \vert \,p_\theta). 
$$
Since the inequality in the penultimate display holds for any (possibly data dependent) $q_\theta \ll p_\theta$ and $q_{S^n}$, we obtain, in particular, 
\begin{align*}%\label{eq:bd1}
(1 - \alpha) \int  D_{\alpha}^{(n)}(\theta, \theta^\ast) \,\qhat_{\theta,\alpha}(d\theta)
\le \alpha \, \Psi_\alpha(\qhat_{\theta,\alpha},\qhat_{S^n,\alpha}) + \log(1/\zeta). 
\end{align*}
The conclusion of the Theorem follows since $\Psi_\alpha(\qhat_{\theta,\alpha},\qhat_{S^n,\alpha}) \le \Psi_\alpha(q_\theta, q_{S^n})$ for any $q_\theta \ll p_\theta$ and $q_{S^n}$.

%Now using the definition of $\qhat_\theta$ and $\qhat_{S^n}$, we obtain
%\begin{align*}
%& - \alpha  \int_\Theta\sum_{s^n} \log\frac{p(Y^n\,|\,\mu,s^n) \, \pi_{s^n}}{p(Y^n\,|\, \theta^\ast)\,\qhat_{S^n}(s^n)} \, \qhat_\theta(d\theta)\,\qhat_{S^n}(s^n)+ D(\qhat_\theta \,\vert \vert \,p_\theta)\\
%&\qquad\qquad = \inf_{q_\theta,\,q_{S^n}} \bigg[- \alpha  \int_\Theta\sum_{s^n} \log\frac{p(Y^n\,|\,\mu,s^n) \, \pi_{s^n}}{p(Y^n\,|\, \theta^\ast)\,q_{S^n}(s^n)} \, q_\theta(d\theta)\,q_{S^n}(s^n)+ D(q_\theta \,\vert \vert \,p_\theta) \bigg],
%\end{align*}
%which implies the claimed bound.

\subsection{Proof of Theorem~\ref{thm:OI_mixture}}

We choose $q_\theta$ as the probability density function $q^\ast_\theta$ of 
$$
Q^\ast_\theta=\frac{P_\pi\big[\, \cdot\,  \cap\,  \m B_n(\pi^\ast,\,\varepsilon_\pi)\big]\otimes P_\mu\big[\, \cdot\,  \cap\,  \m B_n(\mu^\ast,\,\varepsilon_\mu)\big]}{P_\pi\big[ \m B_n(\pi^\ast,\,\varepsilon_\pi)\big]\cdot P_\mu\big[ \m B_n(\mu^\ast,\,\varepsilon_\mu)\big]},
$$
the product measure of restrictions of the priors $(P_\pi,\,P_\mu)$ for $(\pi,\,\mu)$ to two KL neighborhoods $\m B_n(\pi^\ast,\,\varepsilon_\pi)$ and $\m B_n(\mu^\ast,\,\varepsilon_\mu)$ around $(\pi^\ast,\,\mu^\ast)$.

Next, we will characterize the first two moments of the first term on the r.h.s.~in inequality~\eqref{Eqn:key_var} under this choice of $q^\ast_\theta$.
By applying Fubini's theorem, we have
\begin{align*}
&\bbE_{\theta^\ast}\bigg[\int_\Theta q^\ast_\theta(\theta)\, \sum_{i=1}^n \sum_{s_i}\qtil_{S_i}(s_i)\,  \log\frac{p(Y_i\,|\,\mu,s_i) \, \pi_{s_i}}{p(Y_i\,|\,\mu^\ast,s_i) \, \pi^\ast_{s_i}}\, d\theta\bigg] \\
&\qquad\qquad\qquad= \int_\Theta  \bbE_{\theta^\ast}\bigg[  \sum_{i=1}^n \sum_{s_i}\qtil_{S_i}(s_i)\,  \log\frac{p(Y_i\,|\,\mu,s_i) \, \pi_{s_i}}{p(Y_i\,|\,\mu^\ast,s_i) \, \pi^\ast_{s_i}} \bigg]\, q^\ast_\theta(\theta)\,d\theta.
\end{align*} 
By plugging-in the expression of $\qtil_{S_i}(s_i)$ and applying Fubini's theorem, we obtain
\begin{align*}
&\bbE_{\theta^\ast}\bigg[  \sum_{i=1}^n \sum_{s_i}\qtil_{S_i}(s_i)\,  \log\frac{p(Y_i\,|\,\mu,s_i) \, \pi_{s_i}}{p(Y_i\,|\,\mu^\ast,s_i) \, \pi^\ast_{s_i}} \bigg]\\
&\qquad\qquad\qquad= n \, \bbE_{\theta^\ast} \bigg[\qtil_S(s) \, \log\frac{p(Y\,|\,\mu,s) \, \pi_{s}}{p(Y\,|\,\mu^\ast,s) \, \pi^\ast_{s}} \bigg] \\
&\qquad\qquad\qquad= -n \,D(\pi^\ast\, ||\, \pi) - n \, \sum_{s} \pi^\ast_s\, D\big[ p(\cdot\,|\,\mu^\ast,s)\, \big|\big| \, p(\cdot\,|\,\mu,s)\big],
\end{align*}
where recall shorthand $D(\pi^\ast\, ||\, \pi)=\sum_s \pi^\ast_s\,\log(\pi^\ast_s/\pi_s)$ as the KL divergence between categorical distributions with parameters $\pi^\ast$ and $\pi$.
Combining the two preceding displays and invoking the definitions of $B_n(\pi^\ast,\,\varepsilon_\pi)$ and $B_n(\mu^\ast,\,\varepsilon_\mu)$, we obtain 
\begin{align*}
\bbE_{\theta^\ast}\bigg[-\int_\Theta q^\ast_\theta(\theta)\, \sum_{i=1}^n \sum_{s_i}\qtil_{S_i}(s_i)\,  \log\frac{p(Y_i\,|\,\mu,s_i) \, \pi_{s_i}}{p(Y_i\,|\,\mu^\ast,s_i) \, \pi^\ast_{s_i}}\, d\theta\bigg] 
\leq n\,\varepsilon_\pi^2 +  n\,\varepsilon_\mu^2.
\end{align*}
Similarly, by applying Fubini's theorem, we have
\begin{align*}
\mbox{Var}_{\theta^\ast}\bigg[\int_\Theta q^\ast_\theta(\theta)\, \sum_{i=1}^n \sum_{s_i}\qtil_{S_i}(s_i)& \,  \log\frac{p(Y_i\,|\,\mu,s_i) \, \pi_{s_i}}{p(Y_i\,|\,\mu^\ast,s_i) \, \pi^\ast_{s_i}}\, d\theta\bigg] \\
=&\, n\, \mbox{Var}_{\theta^\ast}\bigg[\int_\Theta q^\ast_\theta(\theta)\,\sum_{s}\qtil_{S}(s)\,  \log\frac{p(Y\,|\,\mu,s) \, \pi_{s}}{p(Y\,|\,\mu^\ast,s) \, \pi^\ast_{s}}\, d\theta\bigg] \\
\leq&\, n\, \mb E_{\theta^\ast}\bigg[\int_\Theta q^\ast_\theta(\theta)\,\sum_{s}\qtil_{S}(s)\,  \log\frac{p(Y\,|\,\mu,s) \, \pi_{s}}{p(Y\,|\,\mu^\ast,s) \, \pi^\ast_{s}}\, d\theta\bigg]^2\\
\overset{(i)}{\leq} &\, n  \int_\Theta  \bbE_{\theta^\ast}\bigg[  \sum_{s}\qtil_{S}(s)\,  \log\frac{p(Y\,|\,\mu,s) \, \pi_{s}}{p(Y\,|\,\mu^\ast,s) \, \pi^\ast_{s}} \bigg]^2\, q^\ast_\theta(\theta)\,d\theta\\
\overset{(ii)}{\leq} &\, n  \int_\Theta  \bbE_{\theta^\ast}\bigg[  \sum_{s}\qtil_{S}(s)\,  \log^2\frac{p(Y\,|\,\mu,s) \, \pi_{s}}{p(Y\,|\,\mu^\ast,s) \, \pi^\ast_{s}} \bigg]\, q^\ast_\theta(\theta)\,d\theta,
\end{align*} 
where steps (i) and (ii) follows by Jensen's inequality and Fubini's theorem. 
By plugging-in the expression of $\qtil_{S_i}(s_i)$ and applying Fubini's theorem, we obtain
\begin{align*}
& \bbE_{\theta^\ast}\bigg[  \sum_{s}\qtil_{S}(s)\,  \log^2\frac{p(Y\,|\,\mu,s) \, \pi_{s}}{p(Y\,|\,\mu^\ast,s) \, \pi^\ast_{s}} \bigg]\, q^\ast_\theta(\theta)\,d\theta\\
&\qquad\qquad\qquad \leq  2n \,V(\pi^\ast\, ||\, \pi) + 2n \, \sum_{s} \pi^\ast_s\, V\big[ p(\cdot\,|\,\mu^\ast,s)\, \big|\big| \, p(\cdot\,|\,\mu,s)\big],
\end{align*}
where recall the shorthand $V(\pi^\ast\, ||\, \pi)= \sum_s \pi^\ast_s\,\log^2(\pi^\ast_s/\pi_s)$ to denote the $V$-divergence between categorical distributions with parameters $\pi^\ast$ and $\pi$, and we applied the inequality $(x+y)^2\leq 2x^2+2y^2$.
By combining the two preceding displays and invoking the definitions of $B_n(\pi^\ast,\,\varepsilon_\pi)$ and $B_n(\mu^\ast,\,\varepsilon_\mu)$, we obtain 
\begin{align*}
\mbox{Var}_{\theta^\ast}\bigg[\int_\Theta q^\ast_\theta(\theta)\, \sum_{i=1}^n \sum_{s_i}\qtil_{S_i}(s_i)\,  \log\frac{p(Y_i\,|\,\mu,s_i) \, \pi_{s_i}}{p(Y_i\,|\,\mu^\ast,s_i) \, \pi^\ast_{s_i}}\, d\theta\bigg] \leq 2\,n\,\varepsilon_\pi^2 +   2\,n\,\varepsilon_\mu^2.
\end{align*}

Putting piece together, we obtain by applying Chebyshev's inequality that
\begin{align*}
& \bbP_{\theta^\ast} \bigg\{\int_\Theta q^\ast_\theta(\theta)\, \sum_{i=1}^n \sum_{s_i}\qtil_{S_i}(s_i)\,  \log\frac{p(Y_i\,|\,\mu,s_i) \, \pi_{s_i}}{p(Y_i\,|\,\mu^\ast,s_i) \, \pi^\ast_{s_i}}\, d\theta \leq -D\,n (\varepsilon_\pi^2 +   \varepsilon_\mu^2) \bigg\}  \\
& \overset{(i)}{\le}  \bbP_{\theta^\ast} \bigg\{\int_\Theta q^\ast_\theta(\theta)\, \sum_{i=1}^n \sum_{s_i}\qtil_{S_i}(s_i)\,  \log\frac{p(Y_i\,|\,\mu,s_i) \, \pi_{s_i}}{p(Y_i\,|\,\mu^\ast,s_i) \, \pi^\ast_{s_i}}\, d\theta \\
&\ \ - \bbE_{\theta^\ast}\Big[\int_\Theta q^\ast_\theta(\theta)\, \sum_{i=1}^n \sum_{s_i}\qtil_{S_i}(s_i)\,  \log\frac{p(Y_i\,|\,\mu,s_i) \, \pi_{s_i}}{p(Y_i\,|\,\mu^\ast,s_i) \, \pi^\ast_{s_i}}\, d\theta\Big] \leq -(D-1) \,n (\varepsilon_\pi^2 +   \varepsilon_\mu^2)\bigg\}  \\
& \le \frac{\mbox{Var}_{\theta^\ast}\big[\int_\Theta q^\ast_\theta(\theta)\, \sum_{i=1}^n \sum_{s_i}\qtil_{S_i}(s_i)\,  \log\frac{p(Y_i\,|\,\mu,s_i) \, \pi_{s_i}}{p(Y_i\,|\,\mu^\ast,s_i) \, \pi^\ast_{s_i}}\, d\theta\big] }{(D-1)^2\, n^2\, (\varepsilon_\pi^2 +   \varepsilon_\mu^2)^2} 
\overset{(ii)}{\le} \frac{4}{(D-1)^2\, n \, (\varepsilon_\pi^2 +   \varepsilon_\mu^2)},
\end{align*}
where in steps (i) and (ii), we have respectively used the derived first and second moment bounds.

Finally, we have 
$$
D(q_\theta^\ast\,\vert\vert\,p_\theta) = - \bigg[ \log P_\pi\big[\m B_n(\pi^\ast,\,\varepsilon_\pi)\big] + \log P_\mu\big[B_n(\mu^\ast,\,\varepsilon_\mu) \big] \bigg], 
$$
since for any probability measure $\mu$, a measurable set $A$ with $\mu(A) > 0$, and $\widetilde{\mu}(\cdot) = \mu(\cdot \cap A)/\mu(A)$ the restriction of $\mu$ to $A$, $D(\widetilde{\mu}\,\vert\vert\,\mu) = - \log \mu(A)$. 

The claimed bound in the theorem is now a direct consequence of the preceding two displays and Corollary~\ref{coro:main} with the choice $q_\theta=q^\ast_\theta$.  

\subsection{Proof of Theorem \ref{Thm:RegularPosterior}}
Recall that $\ell_n(\theta) = \log p(Y^n\,|\,\theta)$ is the marginal log-likelihood function (after marginalizing out latent variables), and $\ell_n(\theta, \,\theta^\ast) = \ell_n(\theta) - \ell_n(\theta^\ast)$ the log-likelihood ratio function. Clearly, $\bbE_{\theta^\ast} \exp\{\ell_n(\theta, \theta^\ast)\} = 1$. 
The type II error bound~\eqref{Eqn:T2} in Assumption T implies for fixed $\varepsilon>\varepsilon_n$, any $\theta\in\m F_{n,\varepsilon}$, and any (possibly data dependent)probability measure $q_{S^n}$,
\begin{align*}
&\bbE_{\theta^\ast} \bigg[ \sum_{s^n} q_{S^n}(s^n)\, \exp\Big\{\log\frac{p(Y^n\,|\,\mu,s^n) \, \pi_{s^n}}{p(Y^n\,|\, \theta^\ast)\,q_{s^n}(s^n)}\Big\}\,(1-\phi_{n,\varepsilon}) \bigg] \\
 =&\,\mb E_{\theta^\ast}\Big[ \exp\big\{\ell_n(\theta,\,\theta^\ast)\big\} \, (1-\phi_{n,\varepsilon})\Big] \leq \exp\big\{-c\,n\,r(\theta,\,\theta^\ast)\, \mb I\big[r(\theta,\,\theta^\ast) \geq \varepsilon^2\big]\big\}.
\end{align*}
Thus, for any $\eta \in (0, 1)$, we have
\begin{align*}
\mb E_{\theta^\ast} \Big[  \sum_{s^n} q_{S^n}(s^n)\, \exp\Big\{\log\frac{p(Y^n\,|\,\mu,s^n) \, \pi_{s^n}}{p(Y^n\,|\, \theta^\ast)\,q_{s^n}(s^n)} &\,+ c\,n\,r(\theta,\,\theta^\ast)\, \mb I\big[r(\theta,\,\theta^\ast) \geq \varepsilon^2\big] \\
& - \log(1/\eta)\Big\} \, (1-\phi_{n,\varepsilon})\Big]  \le \eta. 
\end{align*}
Let $P_{\theta,\,\m F_{n,\varepsilon}} (\cdot) = P_\theta(\cdot\,\cap \m F_{n,\varepsilon}) / P_\theta(\m F_{n,\varepsilon})$ denote the restriction of the prior $P_\theta$ on $\m F_{n,\varepsilon}$.
Integrating both side of this inequality with respect to $P_{\theta,\,\m F_{n,\varepsilon}}$ on $\m F_{n,\varepsilon}$ and interchanging the integrals using Fubini's theorem, we obtain
\begin{align*}
\mb E_{\theta^\ast} \Big[ (1-\phi_{n,\varepsilon})\, \int_{\m F_{n,\varepsilon}} &\, \sum_{s^n} q_{S^n}(s^n)\,\exp\Big\{\log\frac{p(Y^n\,|\,\mu,s^n) \, \pi_{s^n}}{p(Y^n\,|\, \theta^\ast)\,q_{s^n}(s^n)}\\
& + c\,n\,r(\theta,\,\theta^\ast)\, \mb I\big[r(\theta,\,\theta^\ast) \geq \varepsilon^2\big] 
- \log(1/\eta)\Big\} \, P_{\theta,\,\m F_{n,\varepsilon}}(d\theta)\Big]  \le \eta. 
\end{align*}
Now, Lemma \ref{lem:var} implies for any $\rho\ll P_{\theta,\,\m F_{n,\varepsilon}}$, 
\begin{align*}
\mb E_{\theta^\ast} \Big[ &\,(1-\phi_{n,\varepsilon})\,\exp\Big\{\int_{\m F_{n,\varepsilon}}\sum_{s^n} q_{S^n}(s^n) \Big(\log\frac{p(Y^n\,|\,\mu,s^n) \, \pi_{s^n}}{p(Y^n\,|\, \theta^\ast)\,q_{s^n}(s^n)} \\
&\quad+ c\,n\,r(\theta,\,\theta^\ast)\, \mb I\big[r(\theta,\,\theta^\ast) \geq \varepsilon^2\big] - \log(1/\eta) \Big)\, \rho(d\theta) - D(\rho\,||\,P_{\theta,\,\m F_{n,\varepsilon}}) \Big\} \Big]  \le \eta. 
\end{align*}
Take $\rho$ to be the restriction $\widehat{Q}_{\m F_{n,\varepsilon}}$ of $\widehat{Q}$ over $\m F_{n,\varepsilon}$,  we obtain
\begin{align*}
\mb E_{\theta^\ast} \Big[ &\,(1-\phi_{n,\varepsilon})\,\exp\Big\{\frac{1}{\widehat{Q}(\m F_{n,\varepsilon})}\, \int_{\m F_{n,\varepsilon}}\sum_{s^n} q_{S^n}(s^n) \Big(\log\frac{p(Y^n\,|\,\mu,s^n) \, \pi_{s^n}}{p(Y^n\,|\, \theta^\ast)\,q_{s^n}(s^n)}\\
& + c\,n\,r(\theta,\,\theta^\ast)\, \mb I\big[r(\theta,\,\theta^\ast) \geq \varepsilon^2\big] - \log(1/\eta) \Big)\, \widehat{Q}(d\theta)- D(\widehat{Q}_{\m F_{n,\varepsilon}}\,||\,P_{\theta,\,\m F_{n,\varepsilon}}) \Big\} \Big]  \le \eta. 
\end{align*}
By applying Markov's inequality, we further obtain that with $\bbP_{\theta^\ast}$ probability at least $(1 - \sqrt{\eta})$, 
\begin{align*}%\label{eq:bd1}
 &\,(1-\phi_{n,\varepsilon})\,\exp\Big\{\frac{1}{\widehat{Q}(\m F_{n,\varepsilon})}\, \int_{\m F_{n,\varepsilon}}\sum_{s^n} q_{S^n}(s^n) \Big(\log\frac{p(Y^n\,|\,\mu,s^n) \, \pi_{s^n}}{p(Y^n\,|\, \theta^\ast)\,q_{s^n}(s^n)}\\
& + c\,n\,r(\theta,\,\theta^\ast)\, \mb I\big[r(\theta,\,\theta^\ast) \geq \varepsilon^2\big] - \log(1/\eta) \Big)\, \widehat{Q}(d\theta)- D(\widehat{Q}_{\m F_{n,\varepsilon}}\,||\,P_{\theta,\,\m F_{n,\varepsilon}}) \Big\}  \le \eta^{-1/2}.
\end{align*}
Denote the big exponential term in the above display by $A_n$. Then the above display is equivalent to 
$$(1-\phi_{n,\varepsilon}) A_n \leq \eta^{-1/2}.$$
The type I error bound~\eqref{Eqn:T1} in Assumption T implies, by Markov's inequality, that $\phi_{n,\varepsilon} \leq e^{-c\,n\,\varepsilon_n^2/2}$ holds with $\bbP_{\theta^\ast}$ probability at least $(1 - e^{-c\,n\,\varepsilon_n^2/2})$, implying $$\phi_{n,\varepsilon}\, A_n \leq e^{-c\,n\,\varepsilon_n^2/2} A_n.$$
Combining the two preceding displays, we obtain that with $\bbP_{\theta^\ast}$ probability at least $(1 - 2e^{-c\,n\,\varepsilon_n^2/2})$ (taking $\eta =e^{-c\,n\,\varepsilon_n^2}$), 
\begin{align*}
A_n = (1-\phi_{n,\varepsilon}) A_n + \phi_{n,\varepsilon}\, A_n \leq e^{c\,n\,\varepsilon_n^2/2}  + e^{-c\,n\,\varepsilon_n^2/2} A_n,
\end{align*}
leading to the following bound for $A_n$ as 
\begin{align*}
A_n \leq \frac{1}{1-e^{-c\,n\,\varepsilon_n^2/2}} \,e^{c\,n\,\varepsilon_n^2/2} \leq 2 \, e^{c\,n\,\varepsilon_n^2/2}.
\end{align*}
Consequently, using the definition of $A_n$, we get
\begin{align*}
\frac{1}{\widehat{Q}(\m F_{n,\varepsilon})} \int_{\m F_{n,\varepsilon}} \sum_{s^n} q_{S^n}(s^n) \Big(&\,\log\frac{p(Y^n\,|\,\mu,s^n) \, \pi_{s^n}}{p(Y^n\,|\, \theta^\ast)\,q_{s^n}(s^n)}+ c\,n\,r(\theta,\,\theta^\ast)\, \mb I\big[r(\theta,\,\theta^\ast) \geq \varepsilon^2\big]  \Big)\, \widehat{Q}(d\theta)\\
&  - D(\widehat{Q}_{\m F_{n,\varepsilon}}\,||\,P_{\theta,\,\m F_{n,\varepsilon}}) \ \ \leq \ \ c\,n\,\varepsilon_n^2/2 + \log 2.
\end{align*}
Rearranging terms, we obtain
\begin{equation}\label{Eqn:SieveBound}
\begin{aligned}
&c\,n\,\int_{\theta\in\m F_{n,\varepsilon},\, r(\theta,\,\theta^\ast) \geq \varepsilon^2} r(\theta,\,\theta^\ast)\, \widehat{Q}(d\theta) -\widehat{Q}(\m F_{n,\varepsilon})\, D(\widehat{Q}_{\m F_{n,\varepsilon}}\,||\,P_{\theta,\,\m F_{n,\varepsilon}})   \\
&\leq \int_{\m F_{n,\varepsilon}} - \sum_{s^n} q_{S^n}(s^n) \, \log\frac{p(Y^n\,|\,\mu,s^n) \, \pi_{s^n}}{p(Y^n\,|\, \theta^\ast)\,q_{S^n}(s^n)}\, \widehat{Q}(d\theta)
+ \big(c\,n\,\varepsilon_n^2/2 + \log 2\big)\,\widehat{Q}(\m F_{n,\varepsilon}).
\end{aligned}
\end{equation}

Similarly, for each $\theta\in\m F_{n,\varepsilon}^c$, from the identity
$\mb E_{\theta^\ast}\Big[ \exp\big\{\ell_n(\theta,\,\theta^\ast)\big\} \Big] =1$ and 
Lemma \ref{lem:var},  we can obtain that for any measure $\rho\ll P_{\theta,\,\m F_{n,\varepsilon}^c}$, 
\begin{align*}
\mb E_{\theta^\ast} \Big[\exp\Big\{\int_{\m F_{n,\varepsilon}^c}  \sum_{s^n}q_{S^n}(s^n) \Big(&\,\log\frac{p(Y^n\,|\,\mu,s^n) \, \pi_{s^n}}{p(Y^n\,|\, \theta^\ast)\,q_{s^n}(s^n)}\\
&\, - \log(1/\eta) \Big)\, \rho(d\theta) - D(\rho\,||\,P_{\theta,\,\m F_{n,\varepsilon}^c}) \Big\} \Big]  \le \eta. 
\end{align*}
Take $\rho$ to be the restriction $\widehat{Q}_{\m F_{n,\varepsilon}^c}$ of $\widehat{Q}$ over $\m F_{n,\varepsilon}^c$ and $\eta  =e^{-c\,n\,\varepsilon_n^2}$,  we can get that with $\bbP_{\theta^\ast}$ probability at least $(1 - 2e^{-c\,n\,\varepsilon_n^2/2})$
\begin{align*}%\label{eq:bd1}
\frac{1}{\widehat{Q}(\m F_{n,\varepsilon}^c)}\Big\{ \int_{\m F_{n,\varepsilon}^c} \sum_{s^n} q_{S^n}(s^n) &\, \log\frac{p(Y^n\,|\,\mu,s^n) \, \pi_{s^n}}{p(Y^n\,|\, \theta^\ast)\,q_{s^n}(s^n)} \, \widehat{Q}(d\theta)\\
&\qquad\qquad- D(\widehat{Q}_{\m F_{n,\varepsilon}^c}\,||\,P_{\theta,\,\m F_{n,\varepsilon}^c}) \Big\} \leq c\,n\,\varepsilon_n^2/2,
\end{align*}
which implies
\begin{align}\label{Eqn:SieveComBound}
0 \leq  \int_{\m F_{n,\varepsilon}^c} - \sum_{s^n} q_{S^n}(s^n) \, \log\frac{p(Y^n\,|\,\mu,s^n) \, \pi_{s^n}}{p(Y^n\,|\, \theta^\ast)\,q_{S^n}(s^n)}\, \widehat{Q}(d\theta)
&+\widehat{Q}(\m F_{n,\varepsilon}^c)\, D(\widehat{Q}_{\m F_{n,\varepsilon}^c}\,||\,P_{\theta,\,\m F_{n,\varepsilon}^c}) \\
&+ \big(c\,n\,\varepsilon_n^2/2 + \log 2\big)\,\widehat{Q}(\m F_{n,\varepsilon}^c).
\end{align}

Finally, by combining equations~\eqref{Eqn:SieveBound} and \eqref{Eqn:SieveComBound}, and using the identity
\begin{align*}
&\,D(\widehat{Q}\,||\,P_\theta) = \int \widehat{q}(\theta)\,\log \frac{\widehat{q}(\theta)}{\pi(\theta)} d\theta \\
&\,= \widehat{Q}(\m F_{n,\varepsilon})\, \int_{\m F_{n,\varepsilon}} \widehat{q}_{\m F_{n,\varepsilon}}(\theta)\,\log \frac{\widehat{q}_{\m F_{n,\varepsilon}}(\theta)}{\pi_{\m F_{n,\varepsilon}}(\theta)} d\theta +  \widehat{Q}(\m F_{n,\varepsilon}^c)\, \int_{\m F_{n,\varepsilon}^c} \widehat{q}_{\m F_{n,\varepsilon}^c}(\theta)\,\log \frac{\widehat{q}_{\m F_{n,\varepsilon}^c}(\theta)}{\pi_{\m F_{n,\varepsilon}^c}(\theta)} d\theta\\
&\qquad\qquad + \widehat{Q}(\m F^c_{n,\varepsilon})\,\log \frac{\widehat{Q}(\m F^c_{n,\varepsilon})}{P_\theta(\m F^c_{n,\varepsilon})} +(1-\widehat{Q}(\m F^c_{n,\varepsilon}))\,\log \frac{1-\widehat{Q}(\m F^c_{n,\varepsilon})}{1-P_\theta(\m F^c_{n,\varepsilon})},
\end{align*}
we have that with $\bbP_{\theta^\ast}$ probability at least $(1 - 2e^{-c\,n\,\varepsilon_n^2/2})$, 
\begin{equation}
\begin{aligned}
&c\,n\,\int_{\theta\in\m F_{n,\varepsilon},\, r(\theta,\,\theta^\ast) \geq \varepsilon^2} r(\theta,\,\theta^\ast)\, \widehat{Q}(d\theta) \\
&\qquad\qquad\qquad\qquad+ \widehat{Q}(\m F^c_{n,\varepsilon})\,\log \frac{\widehat{Q}(\m F^c_{n,\varepsilon})}{P_\theta(\m F^c_{n,\varepsilon})} +(1-\widehat{Q}(\m F^c_{n,\varepsilon}))\,\log \frac{1-\widehat{Q}(\m F^c_{n,\varepsilon})}{1-P_\theta(\m F^c_{n,\varepsilon})} \\
& \leq \int -\sum_{s^n} q_{S^n}(s^n) \, \log\frac{p(Y^n\,|\,\mu,s^n) \, \pi_{s^n}}{p(Y^n\,|\, \theta^\ast)\,q_{S^n}(s^n)}\, \widehat{Q}(d\theta)
+D(\widehat{Q}\,||\,P_\theta) + c\,n\,\varepsilon_n^2/2 + \log 2\\
 &= \Psi(\widehat{q}_\theta,\,q_{S^n}) + c\,n\,\varepsilon_n^2/2 + \log 2.
\end{aligned}
\end{equation}
As a consequence, the first claimed bound follows by taking $q_{S^n}=\qhat_{S^n}$ and the definition of $\qhat_\theta$ and $\qhat_{S^n}$ that minimizes $\Psi(q_\theta,\,q_{S^n})$ over the variational family.

\subsection{Proof of Theorem~\ref{Thm::RegularPosterior}}
Similar to the proof of Theorem~\ref{thm:OI_mixture}, under Assumption P, there exists a event $\m A_n$ satisfying $\mb P_{\theta^\ast}(\m A_n) \geq 1-  \frac{2}{(D-1)^2\, n \, \varepsilon_n^2}$ and measures $(Q^\ast_\theta, \,q_{S^n}^\ast)$, such that under this event, 
\begin{align*}
\Psi(Q^\ast_\theta, \,q_{S^n}^\ast) \leq 2D\,n \,\varepsilon_n^2.
\end{align*}

For any fixed $\varepsilon\geq \varepsilon_n$, denote the event under which the result of Theorem~\ref{Thm:RegularPosterior} holds as $\m B_{\varepsilon}$. Consequently, $\mb P_{\theta^\ast}(\m B_{\varepsilon}) \geq 1- 2 e^{-c\,n\,\varepsilon_n^2}$, and under event $\m A_n\cap \m B_{\varepsilon}$, we have
\begin{align*}
& \bigg\{\widehat{Q}(\m F^c_{n,\varepsilon})\,\log \frac{\widehat{Q}(\m F^c_{n,\varepsilon})}{P_\theta(\m F^c_{n,\varepsilon})} +(1-\widehat{Q}(\m F^c_{n,\varepsilon}))\,\log \frac{1-\widehat{Q}(\m F^c_{n,\varepsilon})}{1-P_\theta(\m F^c_{n,\varepsilon})}\bigg\} \\
&\qquad\qquad\qquad\qquad+ c\,n\,\int_{\theta\in\m F_{n,\varepsilon},\, r(\theta,\,\theta^\ast) \geq \varepsilon^2} r(\theta,\,\theta^\ast)\, \widehat{Q}(d\theta) \leq  C\,n\,\varepsilon_n^2,
\end{align*}
where $C>0$ is some constant independent of $n$ and $\varepsilon$.
Since both terms on the l.h.s.~of the above is nonnegative, we obtain that
\begin{align*}
&\widehat Q (\theta\in\m F_{n,\varepsilon},\, r(\theta,\,\theta^\ast) \geq \varepsilon^2) \leq \varepsilon^{-2}\, \int_{\theta\in\m F_{n,\varepsilon},\, r(\theta,\,\theta^\ast) \geq \varepsilon^2} r(\theta,\,\theta^\ast)\, \widehat{Q}(d\theta)  \leq C'\frac{\varepsilon_n^2}{\varepsilon^2},\\
&\mbox{and} \quad\qquad \widehat{Q}(\m F^c_{n,\varepsilon}) \leq C'' \frac{\varepsilon_n^2}{\varepsilon^2}, \qquad
\mbox{for some constants }C',C''>0.
\end{align*}
Here, the second inequality holds by using $P_\theta(\m F_{n,\varepsilon}^c) \leq  e^{-c\,n\,\varepsilon^2}$ (Assumption T), and the inequality $x \log x + (1 -x) \log ( 1-x)\geq -\log 2$ ($x\in(0,1)$).

Applying above results to $\varepsilon = k\,\varepsilon_n$ with $k=1,2,\ldots,e^{cn\varepsilon_n^2/4}$, and using a union bound, we obtain that the following holds with probability at least $1 - \frac{2}{(D-1)^2\, n \, \varepsilon_n^2} - 2e^{-cn\varepsilon_n^2/4} \geq 1 - \frac{3}{(D-1)^2\, n \, \varepsilon_n^2}$, 
\begin{align*}
\widehat Q (\theta\in\m F_{n,\varepsilon},\, r(\theta,\,\theta^\ast) \geq \varepsilon^2) \leq C'\frac{\varepsilon_n^2}{\varepsilon^2},
\ \ \mbox{and} \ \ \widehat{Q}(\m F^c_{n,\varepsilon}) \leq C'' \frac{\varepsilon_n^2}{\varepsilon^2},
\end{align*}
for all $\varepsilon = k\,\varepsilon_n$ with $k=1,2,\ldots,e^{cn\varepsilon_n^2/4}$. Note that the preceding display implies
$$\widehat Q (r(\theta,\,\theta^\ast) \geq \varepsilon^2) \leq\widehat Q (\theta\in\m F_{n,\varepsilon},\, r(\theta,\,\theta^\ast) \geq \varepsilon^2) +  \widehat{Q}(\m F^c_{n,\varepsilon})  \leq (C'+C'')\frac{\varepsilon_n^2}{\varepsilon^2}.$$
For general $\varepsilon \in[\varepsilon_n,\,  e^{cn\varepsilon_n^2/4}\,\varepsilon_n)$, we can always find an integer $k^\ast$ such that $k^\ast \varepsilon_n \leq \varepsilon <(k^\ast+1)\varepsilon_n$. Using the monotonicity of $\widehat Q (r(\theta,\,\theta^\ast) \geq \varepsilon^2)$ in $\varepsilon$, we can obtain
$$\widehat Q (r(\theta,\,\theta^\ast) \geq \varepsilon^2) \leq \widehat Q (r(\theta,\,\theta^\ast) \geq (k^\ast \varepsilon_n)^2) \leq (C'+C'') \frac{1}{(k^\ast)^2} \leq C_1 \frac{\varepsilon_n^2}{\varepsilon^2}.$$

The second claimed bound follows by
\begin{align*}
\int_{\theta:\,r(\theta,\theta^\ast) \leq R^2} r(\theta,\,\theta^\ast)\,\widehat{Q}(d\theta) &= \int_0^{R^2} \widehat{Q}(r(\theta,\,\theta^\ast) \geq t)\, dt\\
& \leq \varepsilon_n^2 +2 \int_{\varepsilon_n}^R\varepsilon\, \widehat{Q}(r(\theta,\,\theta^\ast) \geq \varepsilon^2)\, d\varepsilon\\
&\leq  \varepsilon_n^2\, \bigg( 1+ 2C_1\, \int_{\varepsilon_n}^R \frac{1}{t}\,dt\bigg) \leq C_2\,\varepsilon_n^2\big(1+\log (R/\varepsilon_n)\big).
\end{align*}

\subsection{Proof of Theorem~\ref{Thm::StrongerMetric}}
According to the proof of Theorem~\ref{thm:main}, we have that for any $\varepsilon\in(0,1)$, and any $(q_\theta,\,Q_{S^n})$ in the variational family,
\begin{align*}
\int \frac{1}{n}\,D^{(n)}_{\alpha}(\theta,\,\theta^\ast) \ q_\theta(\theta)\,d\theta
\le \frac{\alpha}{n(1-\alpha)} \Psi_{\alpha}(q_{\theta},\, q_{S^n})  + \frac{1}{n(1-\alpha)}  \log(1/\zeta).
\end{align*}
Since $\bar \Psi_{\alpha}$ is an upper bound of $\Psi_{\alpha}$, the above implies
\begin{align*}
\int \frac{1}{n}\,D^{(n)}_{\alpha}(\theta,\,\theta^\ast) \ q_\theta(\theta)\,d\theta
\le \frac{\alpha}{n(1-\alpha)} \bar \Psi_{\alpha}(q_{\theta},\, q_{S^n})  + \frac{1}{n(1-\alpha)}  \log(1/\zeta).
\end{align*}
Now choosing $(q_\theta,\,q_{S^n})$ as $(\bar q_\theta,\,\bar q_{S^n})$ in the above, and the claimed bound follows since
\begin{align*}
(\bar q_\theta,\,\bar q_{S^n}) = \argmin_{q_\theta,\,q_{S^n}} \bar \Psi_{\alpha}(q_{\theta},\, q_{S^n}) .
\end{align*}

\subsection{Proof of Corollary~\ref{coro:BLM}}
For the linear model, we have $\m B_n(\theta^\ast,\,\varepsilon) \supset \big\{\theta=(\beta,\,\sigma):\,(2n\sigma^2)^{-1} \,\|X(\beta-\beta^\ast)\|^2 + \big((\sigma^\ast)^2/\sigma^2 - 1 - \log[(\sigma^\ast)^2/\sigma^2]\big)/2 \leq 2\varepsilon^2\big\}$.
Therefore, we may take the neighborhood $\m N_n (\theta^\ast,\,\varepsilon)$ as the product set $\{\beta:\, n^{-1}(\sigma^\ast)^{-2}\,\|X(\beta-\beta^\ast)\|^2 \leq c_1\,\varepsilon^2\}\times\{\sigma:\, |\sigma-\sigma^\ast| \leq c_2\,\varepsilon\}$, for some sufficiently small constants $(c_1,c_2)$ such that $\m N_n (\theta^\ast,\,\varepsilon) \subset \m B_n(\theta^\ast,\,\varepsilon)$. In addition, due to the product form of $\m N_n (\theta^\ast,\,\varepsilon)$, probability density function $q_\theta^\ast$ defined as $q_\theta^\ast \propto I_{\m N_n (\theta^\ast,\,\varepsilon)}$ belongs to the mean field approximation family $\Gamma$. Consequently, we may apply Theorem \ref{thm:NoLatent} to obtain (noting that 
the volume of the neighborhood $\{\beta:\, n^{-1}(\sigma^\ast)^{-2}\,\|X(\beta-\beta^\ast)\|^2 \leq c_1\,\epsilon^2\}$ is $\m O[(d/\varepsilon)^{-d}]$)
\begin{align*}
 &\int \Big\{\frac{1}{n} D^{(n)}_{\theta^\ast,\alpha}(\theta, \theta^\ast) \Big\}\, \qhat_\theta(\theta)\,d\theta
 \lesssim  \frac{ \alpha}{1-\alpha} \, \varepsilon^2 + \frac{d}{n(1-\alpha)} \log\frac{d}{\varepsilon}.
\end{align*}
Setting $\varepsilon= \sqrt{d/n}$ in the preceding inequality and using the fact that $\max\{1,\,(1-\alpha)^{-1}\,\alpha\} \, h^2(p\,||\,q) \leq D_\alpha(p\,||\,q)$ for any density $p$ and $q$ yields the claimed bound.

\subsection{Proof of Corollary~\ref{coro:HBLM}}
Similar to the proof of Corollary~\ref{coro:BLM}, we choose $\m N_n (\theta^\ast,\,\varepsilon)$ as the product set $\{\beta:\, \beta_{(z^\ast)^c}=0,\ n^{-1}(\sigma^\ast)^{-2}\,\|X(\beta-\beta^\ast)\|^2 \leq c_1\,\varepsilon^2\}\times\{\sigma:\, |\sigma-\sigma^\ast| \leq c_2\,\varepsilon\}$, and define the joint measure 
\begin{align*}
q^\ast_{\theta}\otimes q^\ast_{z^\ast} \propto  I_{\m N_n (\theta^\ast,\,\varepsilon)} \otimes \delta_{z^\ast},
\end{align*}
which belongs to the mean field approximation family $\Gamma$.
Now, by applying Theorem \ref{thm:NoLatent} with parameter $\theta=(\beta,\,\sigma,\,z)$, we obtain (by replacing $d$ with $s$ in the proof of Corollary~\ref{coro:BLM} for the $\beta$ part) that
\begin{align*}
 &\int \Big\{\frac{1}{n} D^{(n)}_{\theta^\ast,\alpha}(\theta, \theta^\ast) \Big\}\, \qhat_\theta(\theta)\,d\theta
 \lesssim  \frac{ \alpha}{1-\alpha} \, \varepsilon^2 + \frac{s}{n(1-\alpha)}\, \log\frac{s}{\varepsilon} + \frac{1}{n(1-\alpha)}\, s\,\log d,
\end{align*}
where the last term is due to $-\log p_z(z^\ast) \asymp s\,\log d$. Setting $\varepsilon= \sqrt{s/n}$ leads to the claimed bound.

\subsection{Proof of Corollary~\ref{coro:MGVAP}}
The first claimed bound is a direct consequence by applying Theorem~\ref{Thm::StrongerMetric} (with no latent variables) to the new ELBO $\bar{L}(q)$.
The second bound can be obtained by applying the first claimed inequality~\eqref{Eqn:MGVAP} (taking $w_1=1$, $w_2=\cdots=w_J=0$ to reduce the bound to that of the single Gaussian variational approximation) and the arguments in Corollary~\ref{coro:GApp} (for a single Gaussian variational approximation).

\subsection{Proof of Corollary~\ref{coro:GM}}
It is easy to verify that under Assumption R, there exists some constant $C_1$ depending only on $\delta_0$ such that 
$\m B_n(\pi^\ast,\,\sqrt{K}\,\varepsilon)\supset \{\pi:\,\max_{k}|\pi_k-\pi^\ast_k|\leq C_1\,\varepsilon\}$ (by using the inequality $D(p\,||\,q) \geq 2\,h^2(p\,||\,q)$). In addition, for Gaussian mixture model, it is easy to verify that the KL neighborhood $\m B_n(\mu^\ast,\,\varepsilon)$ defined before Theorem~\ref{thm:OI_mixture} contains the set $\{\mu:\,\max_{k}\|\mu_k-\mu_k^\ast\|\leq 2\,\varepsilon\}$. As a consequence, a direct application of Theorem~\ref{thm:OI_mixture} with $\varepsilon_\pi=\sqrt{K}\,\varepsilon$ and $\varepsilon_\mu=\varepsilon$ yields (using the prior thickness assumption and the fact that the volumes of $\{\pi:\,\max_{k}|\pi_k-\pi^\ast_k|\leq C_1\,\varepsilon\}$ and $\{\mu:\,\max\|\mu_k-\mu^\ast_k\|\leq C_2\,\varepsilon\}$ are at least $\m O(\varepsilon^{-K})$ and $\m O\big((\sqrt{d}/\varepsilon)^{dK}\big)$ respectively) that  
with probability tending to one as $n\to\infty$,
\begin{align*}
& \int \Big\{D_{\alpha}\big[p(\cdot\,|\,\theta)\,\big|\big|\,p(\cdot\,|\, \theta^\ast)\big] \Big\}\,\qhat_\theta(\theta)\,d\theta \lesssim \frac{\alpha}{1-\alpha} \,K\,\varepsilon^2 + \frac{d\,K}{n\,(1-\alpha)} \,\log\frac{d}{\varepsilon}.
\end{align*}
Choosing $\varepsilon = \sqrt{d/n}$ in the above display yields the claimed bound.

\subsection{Proof of Corollary~\ref{coro:LDA}}
Under the notation of Theorem~\ref{thm:main} and Corollary~\ref{coro:main}, for each $n=1,\ldots,N$, the latent variable $S_n=\{z_{dn}:\, d=1,\ldots,D\}$, we will use an extended version of Corollary~\ref{coro:main} from $1$ latent variable per observation to $D$ independent latent variable per observation. In fact, similar arguments as the proofs of Theorem~\ref{thm:main} and Corollary~\ref{coro:main} yield that for the ensemble of KL neighborhoods $\{\m B_N(\gamma_d^\ast;\, \varepsilon_{\gamma_d}):\,d=1,\ldots, D\}$ of $\{\gamma_d^\ast:\,d=1,\ldots,D\}$ where $\m B_N(\gamma_d^\ast;\, \varepsilon_{\gamma_d}):\,=\big\{D(\gamma_d^\ast\,||\,\gamma_d)\leq \varepsilon_{\gamma_d}^2,\ \ V(\gamma_d^\ast\,||\,\gamma_d)\leq \varepsilon_{\gamma_d}^2\big\}$, for $d=1,\ldots,D,$, it holds with probability tending to one as $N\to \infty$ that
\begin{align*}
 &\int \Big\{D_{\alpha}\big[p(\cdot\,|\,\theta),\, p(\cdot\,|\, \theta^\ast)\big] \Big\}\,\qhat_\theta(\theta)\,d\theta \\
 \le &\, \frac{D\, \alpha}{1-\alpha} \, \bigg(\sum_{d=1}^D\varepsilon_{\gamma_d}^2+\varepsilon_\mu^2\bigg) + \Big\{ - \frac{1}{N(1-\alpha)} \sum_{d=1}^D\log P_{\gamma_d}\big[\m B_N(\gamma^\ast_d,\,\varepsilon_{\gamma_d})\big] \Big\} \\
 &\qquad+ \Big\{ - \frac{1}{N(1-\alpha)} \log P_\mu\big[B_N(\mu^\ast,\,\varepsilon_\mu) \big] \Big\},
\end{align*}
where $B_N(\mu^\ast,\,\varepsilon_\mu)=\big\{\max_{S_n} D\big[p(\cdot\,|\, \mu^\ast,\,S_n)\,||\, p(\cdot\,|\, \mu,\,S_n)\big]\leq \varepsilon_\mu^2$, $\max_{S_n}V$ $\big[p(\cdot\,|\, \mu^\ast,\,S_n)\,||\, p(\cdot\,|\, \mu,\,S_n)\big]\leq \varepsilon_\mu^2\big\}$.
Recall that each observation $Y_n$ composed of i.i.d.~observations $\{w_{dn}:\,d=1,\ldots,D\}$, where the conditional distribution of $w_{dn}$ given latent variable $\{z_{dn}=k\}$ only depends on $\beta_k$ for $d=1,\ldots, D$ and $k=1,\ldots,K$. Therefore, when applied to LDA, the preceding display can be further simplified into
\begin{equation}\label{eq:LDA_PAC}
\begin{aligned}
& \int \Big\{D_{\alpha}\big[p(\cdot\,|\,\theta),\, p(\cdot\,|\, \theta^\ast)\big] \Big\}\,\qhat_\theta(\theta)\,d\theta \\
 \le &\, \frac{D\, \alpha}{1-\alpha} \, \bigg(\sum_{d=1}^D\varepsilon_{\gamma_d}^2+\sum_{k=1}^K\varepsilon_{\beta_k}^2\bigg) + \Big\{ - \frac{1}{N(1-\alpha)} \sum_{d=1}^D\log P_{\gamma_d}\big[\m B_N(\gamma^\ast_d,\,\varepsilon_{\gamma_d})\big] \Big\} \\
 &\qquad+ \Big\{ - \frac{1}{N(1-\alpha)} \sum_{k=1}^K\log P_\mu\big[B_N(\beta^\ast_k,\,\varepsilon_{\beta_k}) \big] \Big\},
\end{aligned}
\end{equation}
where $B_N(\beta^\ast_k,\,\varepsilon_{\beta_k}) = \big\{\max_k D\big[p(\cdot\,|\, \beta^\ast_k,\,k)\,||\, p(\cdot\,|\, \beta_k,\,k)\big]\leq \varepsilon_{\beta_k}^2,\, \max_{S_n} V$
\newline
\noindent $\big[p(\cdot\,|\, \beta^\ast_k,\,k)\,||\, p(\cdot\,|\, \beta_k,\,k)\big]\leq \varepsilon_{\beta_k}^2\big\}$.

Return to the proof of the theorem.
Let $S^\beta_k$ denote the index set corresponding to the non-zero components of $\beta_k$ for $k=1,\ldots,K$, and $S^\gamma_d$ the index set corresponding to the non-zero components of $\gamma_d$ for $d=1,\ldots,D$.
Under Assumption S, it is easy to verify that for some sufficiently small constants $c_1,c_2>0$, it holds for all $d=1,\ldots,D$ that $\m B_N(\gamma^\ast_d,\,\varepsilon_{\gamma_d})\supset \big\{ \|(\gamma_d)_{(S^\gamma_d)^c}\|_1 \leq c_1\, \varepsilon_{\gamma_d}, \, \|(\gamma_d)_{S^\gamma_d}- (\gamma^\ast_d)_{S^\gamma_d}\|_{\infty} \leq c_1\,\varepsilon_{\gamma_d}\big\}$, and for all $k=1,\ldots,K$ that $B_N(\beta^\ast_k,\,\varepsilon_{\beta_k})\supset \big\{ \|(\beta_k)_{(S^\beta_k)^c}\|_1 \leq c_2\, \varepsilon_{\beta_k}, \, \|(\beta_k)_{S^\beta_k}- (\beta^\ast_k)_{S^\beta_k}\|_{\infty} \leq c_2\,\varepsilon_{\beta_k}\big\}$.
Applying Theorem 2.1 in \cite{yang2014minimax}, we obtain the following prior concentration bounds for high-dimensional Dirichlet priors 
\begin{align*}
P_{\gamma_d}\big\{ \|(\gamma_d)_{(S^\gamma_d)^c}\|_1 \leq c_1\, \varepsilon_{\gamma_d},& \, \|(\gamma_d)_{S^\gamma_d}- (\gamma^\ast_d)_{S^\gamma_d}\|_{\infty} \leq c_1\,\varepsilon_{\gamma_d}\big\} \\
&\gtrsim \exp\Big\{-C\,e_d\,\log\frac{K}{\varepsilon_{\gamma_d}} \Big\},\ d=1,\ldots,D;\\
P_{\beta_k}\big\{ \|(\beta_k)_{(S^\beta_k)^c}\|_1 \leq c_2\, \varepsilon_{\beta_k}, &\, \|(\beta_k)_{S^\beta_k}- (\beta^\ast_k)_{S^\beta_k}\|_{\infty} \leq c_2\,\varepsilon_{\beta_k}\big\}\\
& \gtrsim \exp\Big\{-C\,d_k\,\log\frac{V}{\varepsilon_{\beta_k}} \Big\}, \ k=1,\ldots,K.
\end{align*}

Putting pieces together, we obtain
\begin{align*}
 &\int \Big\{D_{\alpha}\big[p(\cdot\,|\,\theta),\, p(\cdot\,|\, \theta^\ast)\big] \Big\}\,\qhat_\theta(\theta)\,d\theta \\
 \lesssim &\, \frac{\alpha}{1-\alpha} \, \bigg(\sum_{d=1}^D\varepsilon_{\gamma_d}^2+\sum_{k=1}^K\varepsilon_{\beta_k}^2\bigg) + \frac{1}{N(1-\alpha)} \sum_{d=1}^D e_d\,\log\frac{K}{\varepsilon_{\gamma_d}} +
  \frac{1}{N(1-\alpha)} \sum_{k=1}^K d_k\,\log\frac{V}{\varepsilon_{\beta_k}},
\end{align*}
which is the desired result.

\section{Extension of examples to $\alpha =1$} In this section, we briefly discuss the verification of Assumption T (choice of loss function and constructions of test function $\phi_{n,\varepsilon}$ and sieve $\m F_{n,\varepsilon}$) in the examples of the paper, which implying the variational risk bound through applying Theorem~\ref{Thm::RegularPosterior}. 
\paragraph{Mean field approximation to low-dimensional Bayesian linear regression:}
To simplify the presentation, we assume the priors on $\beta$ and $\sigma$ satisfy $P_\beta(\|\beta\| \geq R) \leq C R^{-c\,d}$ and $P_\sigma (\sigma \in[a ,b]) =1$, where $[a,b]$ contains the truth $\sigma^\ast$. In addition, the design matrix $X$ satisfies that the minimal eigenvalue of $n^{-1}\, X^TX$ is bounded away from zero. Recall that in this example, $\theta=(\beta,\sigma)$. Under these two assumptions, it can be proved that Assumption T holds with $\phi_{n,\varepsilon}$ being the likelihood ratio test $\phi_{n,\varepsilon} = \mb I(\ell(\beta,\,\beta^\ast) \geq C' \,n\,\varepsilon^2)$,
sieve $\m F_{n,\varepsilon} = \{\|\beta\| \leq  \exp(C'' d^{-1}\,n\,\varepsilon^2)\}\times [a,b]$, and loss function $r(\beta,\beta^\ast) = \|\beta - \beta^\ast\|^2$, for all $\varepsilon^2 \geq \varepsilon_n^2 = d\log n/n$, and sufficiently large constant $C', C''>0$.

\paragraph{Mean field approximation to high-dimensional Bayesian linear regression with sparse priors:} Similar to the previous example, we make the assumption that given $z$, the conditional prior of $\beta$ satisfies $P_{\beta\, |\, z} (\|\beta\| \geq R\,|\,z) \leq C R^{-c\,|z|}$, where $|z|$ is the size of binary vector $z$, and the prior on $\sigma$ satisfies  $P_\sigma (\sigma \in[a ,b]) =1$. In addition, we make the sparse eigenvalue assumption that there exists some sufficiently large $C>0$, such that for any $C s$ sparse vector $u$, $n^{-1} \| X u\|^2/\|u\|^2 \geq \mu>0$. Recall that in this example, $\theta=(z,\beta,\sigma)$. Under these assumptions, it can be verified that Assumption T holds with $\phi_{n,\varepsilon}$ being the likelihood ratio test $\phi_{n,\varepsilon} = \mb I(\ell(\beta,\,\beta^\ast) \geq C' \,n\,\varepsilon^2)$,
sieve $\m F_{n,\varepsilon} =\bigcup_{z:\,|z| \leq C'' s}\Big[ \{z\} \times  \{\beta_{z^c}=0, \, \|\beta - \beta^\ast\| \leq  \exp(C''' s^{-1}\,n\,\varepsilon^2)\} \times [a,b]\Big]$, and loss function $r(\beta,\beta^\ast) = \|\beta - \beta^\ast\|^2$, for all $\varepsilon^2 \geq \varepsilon_n^2 = s
\log (nd)/n$, and sufficiently large constant $C', C'',C'''>0$.

\paragraph{Remaining examples:} In all the remaining examples, the parameter space $\Theta$ of $\theta$ is compact. In this case, we can simply take $\m F_{n,\varepsilon} =\Theta$ for all $\varepsilon$ (so $P_\theta(\m F_{n,\varepsilon}^c) =0$), and apply a general recipe \cite{ghosal2000} to construct such tests: (i) construct an $\varepsilon/2$-net $\m N = \{\theta_1, \ldots,  \theta_{N}\}$ such that for any $\theta$ with $r(\theta, \theta^\ast) > \varepsilon^2$, there exists $\theta_j \in \m N$ with $r(\theta, \theta_j) < \varepsilon^2/2$, (ii) construct a test $\phi_{n,j}$ for $H_0: \theta = \theta^\ast$ versus $H_1: \theta = \theta_j$ with type-I and II error rates as in Assumption {\bf T}, and (iii) set $\phi_n = \max_{1\le j \le N} \phi_{n, j}$. 
The type-II error of $\phi_n$ retains the same upper bound, while the type-I error can be bounded by $N \, e^{-2 n \varepsilon^{2}}$. Since $N$ can be further bounded by $N(\Theta, \varepsilon^2/2, r)$, the covering number of $\Theta$ by $r$-balls of radius $\varepsilon^2/2$, it suffices to show that $N(\Theta, \varepsilon^2/2, r) \lesssim e^{n \varepsilon^{2}}$. When $\Theta$ is a compact subset of $\mathbb{R}^d$ and $r(\theta,\theta^\ast) \gtrsim \|\theta-\theta^\ast\|^2$ (the squared Euclidean metric), then $N(\Theta, \varepsilon^2, r) \lesssim \varepsilon^{-d} \lesssim e^{n \varepsilon^{2}}$ as long as $\varepsilon \gtrsim \sqrt{\log n/n}$. More generally, if $\Theta$ is a space of densities and $r$ the squared Hellinger/$L_1$ metric, then construction of the point-by-point tests in (i) from the likelihood ratio test statistics follows from the classical Birg\'{e}-Lecam testing theory \cite{birge1983approximation,lecam1973convergence}; see also \cite{ghosal2000}. 

To summarize, in these examples with compact parameter space, Assumption T holds with $\m F_{n,\varepsilon} =\Theta$, $r(\theta,\,\theta^\ast) = h^2(p(\cdot\,|\,\theta),\, p(\cdot\,|\,\theta^\ast))$, the squared Hellinger distance between $p(\cdot\,|\,\theta)$ and $p(\cdot\,|\,\theta^\ast)$, and $\phi_{n,\varepsilon}$ the likelihood ratio test function, for all $\varepsilon \gtrsim \sqrt{\log n/n}$. Moreover, the rate $\varepsilon_n$ is determined by their respective prior concentration Assumption P.

\newpage

\bibliography{VB1}

\end{document}